\newcommand{\al}{\alpha}
\newcommand{\dd}{ {d-1} }
\newcommand{\lm}{\left|}
\newcommand{\J}{\mathcal{J}}
\newcommand{\rmo}{\right|}
\newcommand{\robnorm}[2]{\left| \! \left| \! \left| #1 \right| \! \right| \! \right|_{X,p,#2}}
\newcommand{\robnormp}[3]{\left| \! \left| \! \left| #1 \right| \! \right| \! \right|_{X,#3,#2}}
\newcommand{\N}{{\ensuremath{\mathbb N}}}
\newcommand{\Sd}{{\ensuremath{\mathbb S}}}
\newcommand{\E}{{\ensuremath{\mathbb E}}}
\newcommand{\Pro}{{\ensuremath{\mathbb P}}}
\newcommand{\R}{{\ensuremath{\mathbb R}}}
\newcommand{\lv}{\left\lVert}
\newcommand{\rv}{\right\rVert}
\newcommand{\ai}{a_{i_1,...,i_d}}
\newcommand{\aaa}{a_{i,j}}
\newcommand{\1}{\textbf{1}}
\newcommand{\il}{X^1_{i_1} \cdot \ldots \cdot X^d_{i_d}}
\newcommand{\Xii}{\prod_{k=1}^d X^k_i}
\newcommand{\Chaos} {\lv \sum_{i,j} \aaa X_i Y_j \rv_p}
\newcommand{\nx}{\hat{N}^X_i}
\newcommand{\ny}{\hat{N}^Y_j}
\newcommand{\m}{\hat{M}^X_i}
\newcommand{\kx}{\sum_{i} \nx(x_i) \leq p}
\newcommand{\ky}{\sum_{j} \ny(y_j) \leq p}
\newcommand{\wy}{\mathcal{E}}
\newcommand{\momp}[1]{\lv #1 \rv_p}
\newcommand{\norxy}[1]{\lv #1 \rv_{X,Y,p}}
\newcommand{\norxx}[1]{\lv #1 \rv_{X,X,p}}
\newcommand{\norx}[1] {\lv #1 \rv_{X,p}}
\newcommand{\norxp}[2] {\lv #1 \rv_{X,#2}}
\newcommand{\nory}[1]{\lv #1 \rv_{Y,p}}
\newcommand{\p}{{\lfloor p \rfloor }}
\newcommand{\id}{{i_1,\ldots,i_d}}
\newcommand{\eps}{\varepsilon}
\newcommand{\summ}[2]{\sum_{\stackrel{#1}{#2}}}
\newcommand{\mx}{\hat{M}^X_i}
\providecommand{\keywords}[1]{\textbf{\textit{Keywords: }} #1}
\providecommand{\klas}[1]{\textbf{\textit{AMS MSC 2010: }} #1}
\newtheorem{twr}[subsection]{Theorem}
\newtheorem{lem}[subsection]{Lemma}
\newtheorem{prep}[subsection]{Proposition}
\newtheorem{ex}[subsection]{Example}
\newtheorem{fak}[subsection]{Fact}
\newtheorem{cor}[subsection]{Corollary}
\newtheorem{rem}[subsection]{Remark}
\begin{document}

\title{Tail and moment estimates for a class of random chaoses of order two.}
\author{Rafa{\l} Meller\thanks{Research supported by the National Science Centre, Poland grant 2015/18/A/ST1/00553.}}
\date{}
\maketitle

\begin{abstract}
We derive two-sided bounds for moments and tails of random quadratic forms (random chaoses of order $2$), generated by independent symmetric random variables such that $\lv X \rv_{2p} \leq \al \lv X \rv_p$ for any $p\geq 1$ and some $\al\geq 1$. Estimates are deterministic and exact up to some multiplicative constants which depend only on $\al$.

\keywords{Random quadratic forms; random chaoses; tail and moment estimates.} \\
\klas{60E15}
\end{abstract}

\section{Introduction} \label{sekcja1}
A (homogeneous) polynomial chaos of order $d$ is a random variable defined as
$$
S=\sum_{\id} \ai X_{i_1}\cdot \ldots \cdot X_{i_d},
$$
where $X_1,\ldots,X_n$ are independent random variables and $(\ai)_{1\leq i_1,\ldots,i_d\leq n}$ is an $d$-indexed symmetric array of real numbers with $0$'s on the generalized diagonals ($\ai=0$ whenever $i_k=i_l$ for some $k\neq l$). Such random variables occurs in many places in modern probability, e.g in  approximations of multiple stochastic integrals, 
Fourier-Walsh expansions of functions on the discrete cube (when the underlying variables $X_i$'s are independent Rademachers), in subgraph counts in random graphs (in this case $X_i$'s are zero-one random variables) or in statistical physics.

Chaoses of order $1$ are just linear combinations of independent random variables, classical objects
of probability theory. There are numerous bounds for moments and tails of sums of independent r.v's, in particular Lata{\l{}}a \cite{Chaos1} derived two-sided bounds
for $L_p$-norms of $\sum_i a_i X_i$ under assumptions that either $a_iX_i$ are nonnegative or $X_i$ are symmetric. The case $d\geq 2$ is much less understood. In the nonnegative case Lata{\l}a and {\L}ochowski
\cite{Latloch} established two-sided bounds for $\|S\|_p=(\E|S|^p)^{1/p}$ in the case, where the underlying
variables have log-concave tails. This result was generalized by the author \cite{Mel} to nonnegative chaoses
based on random variables satisfying the following moment condition
\begin{equation}
\label{mc}
\lv X_i \rv_{2p} \leq \al \lv X_i \rv_p \textrm{ for all } p\geq 1. 
\end{equation}
In the symmetric case two-sided bounds for $\|S\|_p$ are known only in few particular cases: Gaussian chaoses of any order \cite{Latgaus}, chaoses of any order based on symmetric  random variables with 
log-convex tails \cite{7}, chaoses of order $d\leq 3$ based on symmetric  random variables with log-concave tails \cite{AdLat,GluKw,Some}.

The aim of this article is to derive two-sided bounds for moments and tails of random quadratic forms (chaoses of order two) $\sum_{i\neq j} \aaa X_i X_j $  under the assumption \eqref{mc}. Since any symmetric random variable $X$ with log-concave tails satisfies \eqref{mc} with $\al=2$, this generalizes the previous result of Lata\l{}a \cite{Some}. Moreover \eqref{mc} arises naturally in the paper of  Lata\l{}a  and Strzelecka \cite{LatSt} as a sufficient condition (and even necessary in the i.i.d case)  for comparison of weak and strong moments of the random variable $\sup_{t \in T \subset \R^n} \sum t_i X_i $.  Lastly it is shown in \cite{Mel} (see Remark \ref{kryt} below) that if $\ln \Pro(|X|\geq Ktx)\leq t^\beta \Pro(|X|\geq x)$ for any $t,x\geq 1$ and some constants $K,\beta$, then   \eqref{mc} holds with $\al=\al(K,\beta)$. Thus this condition can be verified in many examples by an easy computation.

In the main proof of Theorem \ref{1} below we use the same idea as in \cite{Mel}. We replace variables $X_i$ by products of independent variables
with log-concave tails. However, the situation is much more difficult to handle than in the nonnegative case, since in the symmetric log-concave case two-sided moment bounds are known only for chaoses of small order. Instead we first establish Gluskin-Kwapien-type bounds for moments of linear combinations, decouple quadratic forms, apply conditionally bounds for $d=1$ and get to the point of estimating the $L_p$-norms of suprema of linear combinations of $X_i$'s. Although formulas are similar as in Lata{\l}a's paper 
\cite{Some}, we cannot use 
his approach since our random variables do not satisfy nice dimension-free concentration inequalities.
Instead we use a recent result of Lata{\l}a and Strzelecka \cite{LatSt} and reduce the question to
finding a right bound on $L_1$-norm of suprema. To treat this we use some ideas from  \cite{Latloch}
and \cite{AdLat}.

\section{Notation and main results} \label{sekcja2}
If $\textbf{v}$ is a deterministic vector in $\R^N$ (we do not exclude $N=\infty$) then $\lv \textbf{v} \rv_r$, $r \in [1,\infty]$, is its $l^r$ norm. 
We denote  by $g_1,g_2,\ldots$ independent $\mathcal{N}(0,1)$ random variables and by $\eps_1,\eps_2,\ldots$ independent symmetric $\pm 1$ random variables (Bernoulli sequence). We write $[n]$ for $\left\{1,\ldots,n\right\}$. If $X$ is a r.v. then $\lv X \rv_p:=\left( \E |X|^p \right)^{1/p}$. 
We say that a r.v. $X$ belongs to the class $\Sd(d)$ if $X$ is symmetric, $\lv X \rv_2=1/e$  and for every $p\geq 1$ $\lv X \rv_{2p} \leq 2^d \lv X \rv_{p}$ (the constant $1/e$ is chosen for technical reasons).
For a sequence $(X_i)_{i\geq 1}$ we define the function $N^X_i(t)=-\ln \Pro \left(|X_i|\geq |t| \right) \in [0,\infty]$ and set
\begin{equation}
\nx(t):=\begin{cases}t^2 &\textrm{ for } |t|\leq 1, \\ N^X_i(t) &\textrm{ for } |t| > 1. \end{cases} \label{nx}
\end{equation}
Analogously we define $N^Y_j(t),\ny(t)$. The following three norms   will play crucial role in this paper:
\begin{align}
&\lv (\aaa) \rv_{X,Y,p}=\sup \left\{\sum_{i,j} \aaa x_i y_j \ \Big{|} \ \kx,\ \ky \right\} \label{normxy}, \\
&\lv (a_i) \rv_{X,p} = \sup \left\{ \sum_i a_i x_i \ \Big{|}\ \kx \right\} ,\ \ \lv (a_j) \rv_{Y,p} = \sup \left\{ \sum_j a_j y_j \ \Big{|} \ \ky \right\}, \label{normy}
\end{align}
(see Lemma \ref{spraw} for the proof, that they  are norms).

By $C,c$ we denote a universal constant which may differ at each occurrence. We also write $C(d),c(d)$ if the constants may depend on the parameter $d$. We write $a \sim b$ ($a \sim^{d} b $ resp.) if  $b/C \leq a \leq Cb$ ($b/C(d) \leq a \leq C(d) b$ resp.).
 
Our main result is the following theorem.
\begin{twr} \label{1}
Assume that $(X_i),(Y_j)$ are independent random variables from the $\Sd(d)$ class. Then for any finite
matrix $(a_{i,j})$ and any $p\geq 1$,
\begin{equation}
\Chaos \sim^d \lv (\aaa) \rv_{X,Y,p}+ \lv \left(\sqrt{\sum_i \aaa^2}\right)_j \rv_{Y,p} + \lv \left(\sqrt{\sum_j \aaa^2}\right)_i \rv_{X,p}.\nonumber
\end{equation}
\end{twr}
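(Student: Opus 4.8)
The plan is to prove the two-sided bound by treating the upper and lower estimates separately, with the upper bound being the substantial direction. For the \textbf{lower bound}, I would isolate the three terms on the right-hand side. The term $\norxy{(\aaa)}$ is controlled by conditioning: fix a realization of $(X_i)$ and use a standard decoupling/contraction argument together with the fact that for a single sequence the $L_p$-norm of $\sum_j b_j Y_j$ dominates $\sup\{\sum_j b_j y_j : \ky\}$ up to constants (this is the Gluskin--Kwapie\'n--Lata\l a philosophy for $d=1$, which must be reproven here under the weaker moment assumption \eqref{mc}). The two ``mixed'' terms $\nory{(\sqrt{\sum_i \aaa^2})_j}$ and $\norx{(\sqrt{\sum_j \aaa^2})_i}$ come from conditioning on one family and applying the $d=1$ lower bound in the other variable: e.g.\ conditionally on $(Y_j)$, the chaos is a linear form $\sum_i X_i (\sum_j \aaa Y_j)$ in the $X_i$'s, whose $p$-th moment is at least (a constant times) $\norx{(\sum_j \aaa Y_j)_i}$, and then one takes $L_p$ in $(Y_j)$ and uses $\|\sum_j \aaa Y_j\|_p \gtrsim (\sum_j \aaa^2)^{1/2}$ together with a Jensen-type argument to pull the square-root inside the $X$-norm.

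For the \textbf{upper bound} I would follow the route advertised in the introduction. \textbf{Step 1:} replace each $X_i$ (resp.\ $Y_j$) by a product of finitely many independent symmetric random variables with log-concave tails, so that the moment condition \eqref{mc} is converted into a genuine structural representation; this mirrors the device of \cite{Mel}. \textbf{Step 2:} decouple the quadratic form $\sum_{i\ne j}\aaa X_i X_j$ into $\sum_{i,j}\aaa X_i Y_j'$ with an independent copy, at the cost of universal constants. \textbf{Step 3:} condition on one family and apply the $d=1$ moment bounds (the Gluskin--Kwapie\'n-type estimate established earlier in the paper for variables satisfying \eqref{mc}) to the resulting linear form; this expresses $\|\sum_{i,j}\aaa X_i Y_j\|_p$ in terms of an $L_p$-norm, in the $Y$ variables, of a supremum of linear forms in the $X_i$'s, i.e.\ an expression of the shape $\|\sup_{\,x\in B}\sum_j (\sum_i \aaa x_i) Y_j\|_p$ where $B$ is the norming set $\{x : \kx\}$. \textbf{Step 4:} bound this $L_p$-norm of a supremum. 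Here one cannot invoke dimension-free concentration (our variables are not log-concave), so instead one invokes the Lata\l a--Strzelecka comparison of weak and strong moments \cite{LatSt}, valid precisely under \eqref{mc}, to reduce the strong ($L_p$) moment of the supremum to its weak moment plus the $L_1$-norm (expected value) of the supremum.

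\textbf{Step 5:} estimate $\E\sup_{x\in B}\sum_j(\sum_i\aaa x_i)Y_j$ from above by the three target quantities. This is the technical heart of the argument: one uses a chaining/union-bound analysis of the supremum over the polyhedral set $B$, splitting contributions according to which coordinates of $x$ are ``large'' versus ``small'' (the piecewise definition \eqref{nx} of $\nx$ is tailored to this), and extracting the $\norxy{}$ term from the bilinear part and the $\norx{}$, $\nory{}$ terms from the parts where one family is effectively frozen at an extreme point. The combinatorial bookkeeping here borrows from \cite{Latloch} and \cite{AdLat}. I expect \textbf{Step 5 (together with the $d=1$ bound feeding Step 3)} to be the main obstacle: getting the right single-family moment bound under only \eqref{mc} rather than log-concavity, and then controlling the expected supremum by exactly these three norms without extra logarithmic losses, is where all the real work lies; Steps 1--2 and 4 are comparatively soft reductions.
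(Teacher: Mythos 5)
Your plan follows essentially the same route as the paper: the lower bound via conditional application of the one-dimensional estimate together with Jensen's inequality, and the upper bound by reducing, through the one-dimensional Gluskin--Kwapie\'n-type bound for the $\Sd(d)$ class (Theorem \ref{jeden}) and the Lata{\l}a--Strzelecka weak--strong moment comparison (Theorem \ref{9}), to an estimate of the expected supremum of a linear process, which is then handled by the $U,V$-decomposition borrowed from \cite{Latloch} and the Gaussian-process decomposition from \cite{AdLat}. Two caveats: the decoupling in your Step 2 is not needed for Theorem \ref{1} itself (the statement is already in decoupled form; decoupling only enters in Corollary \ref{2}), and the steps you correctly flag as the main obstacles --- the one-dimensional moment bound under \eqref{mc} and the expected-supremum estimate (Proposition \ref{naprawa}) --- are precisely where almost all of the paper's work lies, so what you have is an accurate roadmap rather than a proof.
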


We postpone the proof of Theorem \ref{1} till the end of this article and now
present some corollaries. The first one shows that property \eqref{mc} is preserved by the variable $\sum_{i,j} a_{i,j} X_i Y_j$.
\begin{cor} \label{wzrost}
Under the assumptions of Theorem \ref{1} we have
\begin{equation}
\lv \sum_{i,j} a_{i,j} X_i Y_j \rv_{2p}\leq C(d) \lv \sum_{i,j} a_{i,j} X_i Y_j \rv_{p} \label{rosnie}
\end{equation}
\end{cor}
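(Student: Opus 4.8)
The plan is to derive Corollary~\ref{wzrost} directly from the two-sided bound in Theorem~\ref{1} by showing that each of the three terms on the right-hand side roughly doubles (up to a constant depending on $d$) when $p$ is replaced by $2p$. Write $R(p)$ for the right-hand side of the equivalence in Theorem~\ref{1}, so that $\Chaos \sim^d R(p)$ and $\lv \sum_{i,j} a_{i,j} X_i Y_j \rv_{2p} \sim^d R(2p)$; it then suffices to prove $R(2p) \leq C(d) R(p)$.

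First I would handle the three pieces separately. For the mixed term $\lv (\aaa) \rv_{X,Y,2p}$, I would compare the feasible sets in the supremum \eqref{normxy}: a vector $(x_i)$ with $\sum_i \nx(x_i) \leq 2p$ should be expressible, after rescaling each coordinate by a bounded factor, as something feasible at level $p$. The key elementary fact is that $\nx$ behaves submultiplicatively under scaling in the sense that $\nx(t/\lambda) \leq \lambda^{-\min(2,?)}\nx(t)$ is too naive; instead the $\Sd(d)$ assumption $\lv X_i\rv_{2q}\leq 2^d\lv X_i\rv_q$ translates, via the standard equivalence between moment growth and the profile $N^X_i$ (this is exactly the kind of estimate used to prove the norms in \eqref{normxy}--\eqref{normy} are norms, cf. Lemma~\ref{spraw}), into a statement that $N^X_i(Kt) \geq 2 N^X_i(t)$ for $t\geq 1$ with $K = K(d)$, hence $\nx(t/K) \leq \tfrac12 \nx(t)$ for $|t|\geq K$ and a comparable bound for small $t$. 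Consequently, if $\sum_i \nx(x_i)\le 2p$ then $\sum_i \nx(x_i/K) \le p$ (handling the region $|x_i|<K$ separately, where $\nx$ is comparable to $t^2$ and rescaling by $\sqrt2$ suffices), so dividing $x$ and $y$ each by the appropriate bounded constant maps a $2p$-feasible pair to a $p$-feasible pair; taking suprema gives $\lv (\aaa)\rv_{X,Y,2p} \leq C(d)\lv(\aaa)\rv_{X,Y,p}$. The same rescaling argument applied to the one-parameter norms \eqref{normy} gives $\lv(b_j)_j\rv_{Y,2p}\le C(d)\lv(b_j)_j\rv_{Y,p}$ and likewise on the $X$ side, with $b_j = \sqrt{\sum_i \aaa^2}$ (and symmetrically); note the coefficient vectors themselves do not depend on $p$, so no further work is needed there.

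Combining the three estimates yields $R(2p)\le C(d) R(p)$, and then
\[
\lv \sum_{i,j} a_{i,j} X_i Y_j \rv_{2p} \le C(d) R(2p) \le C(d) R(p) \le C(d) \lv \sum_{i,j} a_{i,j} X_i Y_j \rv_{p},
\]
which is \eqref{rosnie}. An alternative, perhaps cleaner, route avoids reproving the rescaling lemma: since the $\Sd(d)$ hypothesis is itself a ``doubling of moments'' statement and Theorem~\ref{1}'s right-hand side is built by combinations and suprema that are monotone and positively homogeneous, one can try to quote directly the growth behaviour of the norms $\lv\cdot\rv_{X,p}$ in $p$ if such a statement appears among the lemmas used to establish Theorem~\ref{1}; but absent that, the self-contained rescaling argument above is the safe path.

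The main obstacle is the first step: making precise and correct the claim that $p\mapsto \lv(a_i)\rv_{X,p}$ (and the bilinear analogue) grows at most geometrically, i.e. $\lv(a_i)\rv_{X,2p}\le C(d)\lv(a_i)\rv_{X,p}$. This is where the $\Sd(d)$ condition must be converted into a pointwise statement about the profile $N^X_i$, uniformly in $i$, and one has to deal carefully with the two regimes $|t|\le 1$ and $|t|>1$ in the definition \eqref{nx} of $\nx$, as well as with the constant $1/e$ normalization of $\lv X_i\rv_2$. Everything after that is routine monotonicity and homogeneity of suprema.
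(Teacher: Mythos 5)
Your proof is correct and, at the top level, is the same as the paper's: both reduce \eqref{rosnie} to showing that the deterministic right-hand side of Theorem \ref{1} at level $2p$ is at most $C(d)$ times its value at level $p$, and then invoke Theorem \ref{1} twice. The growth statement you hope to quote ``if such a statement appears among the lemmas'' is exactly Lemma \ref{gwiazda}, namely $\lv (a_i) \rv_{X,up} \leq C(d)\, u^d \lv (a_i) \rv_{X,p}$; the paper's entire proof of the corollary consists of applying it twice --- once in each variable, via the nested-supremum representation $\lv (a_{i,j}) \rv_{X,Y,2p} = \sup \left\{ \lv \left(\sum_j a_{i,j} y_j\right)_i \rv_{X,2p} \ \Big{|} \ \sum_j \ny(y_j) \leq 2p \right\}$ --- which is the same two-step rescaling of the $x$- and $y$-constraint sets that you describe, plus the trivial observation for the two marginal terms. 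The only genuine divergence is in how that growth lemma is obtained: the paper proves Lemma \ref{gwiazda} by passing to the equivalent norms $\robnorm{(a_i)}{d}$ (Lemma \ref{porsum}) and using convexity of the log-concave replacement profiles $M^X_i$ via \eqref{ulubionaetykieta1}, whereas you propose deriving a pointwise doubling $N^X_i(Kt) \geq 2 N^X_i(t)$ for $t\geq 1$ and $K=K(d)$ directly from the $\Sd(d)$ moment condition (Paley--Zygmund in one direction, Chebyshev in the other). That route does work and is more self-contained, with one small correction: in the intermediate regime $1 < |t| \leq K$ the function $\nx$ equals $N^X_i(t)$, not $t^2$, so there one should argue $\nx(t/K) = (t/K)^2 \leq 1 \leq \tfrac{1}{2} N^X_i(t)$, which holds because the normalization $\lv X_i \rv_2 = 1/e$ forces $N^X_i(1) \geq 2$ by Chebyshev. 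Given that Lemma \ref{gwiazda} is already available in the paper, citing it --- as you suggest in your ``alternative route'' --- is the shorter path and is precisely what the author does.
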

\begin{proof}
Using Lemma \ref{gwiazda}  below twice we get
\begin{align*}
&\lv  (a_{i,j}) \rv_{X,Y,2p}  \\
&= \sup \left\{ \lv \left(\sum_{j} a_{i,j}y_j\right)_i  \rv_{X,2p} \ \Big{|} \ \sum_j \ny(y_j)\leq 2p \right\}\leq C(d) \sup \left\{ \lv \left(\sum_{j} a_{i,j}y_j\right)_i  \rv_{X,p} \ \Big{|} \ \sum_j \ny(y_j)\leq 2p \right\}  \\
&=C(d) \sup \left\{ \lv \left(\sum_{i} a_{i,j}x_i\right)_j  \rv_{Y,2p} \ \Big{|} \ \sum_i \nx(x_i)\leq p \right\}\leq C(d) \norxy{(a_{i,j})}.
\end{align*}
The above estimate together with Lemma \ref{gwiazda} and Theorem \ref{1} yields the assertion.
\end{proof}

Standard arguments show how to get from moment to tail bounds. 

\begin{cor} \label{taildec}
Under the assumptions of Theorem \ref{1} we have
\begin{equation}
\label{tail1dec}
\Pro\left(\left|\sum_{i,j} \aaa X_i Y_j\right|\geq C(d) \left( \lv (\aaa) \rv_{X,Y,p}+\nory{\left(\sqrt{\sum_i \aaa^2} \right)_j}+\norx{\left(\sqrt{\sum_j \aaa^2} \right)_i}\right) \right)
\leq e^{-p} 
\end{equation}
and 
\begin{equation}
\label{tail2dec}
\Pro\left(\left|\sum_{i,j} \aaa X_i Y_j\right|\geq c(d) \left( \lv (\aaa) \rv_{X,Y,p}+\nory{\left(\sqrt{\sum_i \aaa^2} \right)_j}+\norx{\left(\sqrt{\sum_j \aaa^2} \right)_i}\right) \right)\geq e^{-c(d)p}. 
\end{equation}
\end{cor}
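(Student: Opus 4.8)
The plan is to derive both tail bounds from the moment estimate of Theorem~\ref{1} by the standard Chebyshev/Paley--Zygmund route, so the only real content is bookkeeping with the parameter $p$. Write $M_p:=\norxy{(\aaa)}+\nory{(\sqrt{\sum_i \aaa^2})_j}+\norx{(\sqrt{\sum_j \aaa^2})_i}$ for the right-hand side of Theorem~\ref{1}, and $S:=\sum_{i,j}\aaa X_iY_j$, so that Theorem~\ref{1} reads $\momp{S}\sim^d M_p$ for all $p\ge 1$.

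For the upper tail \eqref{tail1dec}, I would fix $p\ge 1$ and apply Chebyshev's inequality in $L_p$: $\Pro(|S|\ge e\,\momp{S})\le e^{-p}$. Since $\momp{S}\le C(d)M_p$ by Theorem~\ref{1}, taking the threshold $C(d)M_p$ with a slightly enlarged $C(d)$ gives exactly \eqref{tail1dec}. (For $p<1$ one does not need to say anything, as the statement is only claimed for $p\ge 1$.) This direction is essentially immediate.

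For the lower tail \eqref{tail2dec}, I would use the Paley--Zygmund inequality together with the moment condition \eqref{mc} for $S$ itself, which is exactly the content of Corollary~\ref{wzrost}: $\momp{S}_{2p}\le C(d)\momp{S}_p$. Recall that Paley--Zygmund in the form for higher moments gives, for a symmetric r.v.\ $S$,
\begin{equation}
\Pro\left(|S|\ge \tfrac12 \momp{S}\right)\ge c\left(\frac{\momp{S}^{p}}{\momp{S}_{2p}^{p}}\right)^{?}\nonumber
\end{equation}
more precisely the clean statement: if $\momp{S}_{2p}\le \kappa\,\momp{S}_p$ then $\Pro(|S|\ge \tfrac12\momp{S}_p)\ge (2\kappa^2)^{-p}$, obtained by splitting $\E|S|^p=\E|S|^p\mathbf{1}_{|S|<t}+\E|S|^p\mathbf{1}_{|S|\ge t}$ with $t=\tfrac12\momp{S}_p$, bounding the second term by $(\E|S|^{2p})^{1/2}\Pro(|S|\ge t)^{1/2}$ via Cauchy--Schwarz, and rearranging. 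Applying this with $\kappa=C(d)$ from Corollary~\ref{wzrost} yields $\Pro(|S|\ge \tfrac12\momp{S}_p)\ge e^{-c(d)p}$, and then replacing $\momp{S}_p$ by $c(d)M_p$ (lower bound in Theorem~\ref{1}) finishes \eqref{tail2dec}.

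The only mild subtlety — the ``main obstacle,'' such as it is — is making sure the implicit constants track correctly: the Paley--Zygmund exponent $c(d)$ is governed by $\log(\kappa^2)$ where $\kappa=C(d)$ is the growth constant from Corollary~\ref{wzrost}, so one should state the argument with $p$ a fixed parameter throughout (not optimizing over $p$) and only at the end absorb the comparison constants of Theorem~\ref{1} into $c(d),C(d)$. I would also remark that symmetry of $S$ (inherited from symmetry of the $X_i$) is what lets us dispense with any centering in Paley--Zygmund, and that the argument is entirely standard, citing e.g.\ the analogous passage in \cite{Some} or \cite{Latloch}.
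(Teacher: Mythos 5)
Your proposal is correct and follows essentially the same route as the paper: Chebyshev's inequality in $L_p$ combined with the upper bound of Theorem \ref{1} for \eqref{tail1dec}, and the Paley--Zygmund inequality together with the moment-growth bound \eqref{rosnie} of Corollary \ref{wzrost} and the two-sided estimate of Theorem \ref{1} for \eqref{tail2dec}. The only difference is cosmetic: you rederive the Paley--Zygmund step from the Cauchy--Schwarz splitting, whereas the paper simply cites it.
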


\begin{proof}
The upper bound \eqref{tail1dec} is an immediate consequence of Chebyshev's inequality and 
Theorem \ref{1}. To establish the lower bound we have
\begin{align*}
&\Pro\left(\left|\sum_{i,j} \aaa X_i Y_j\right| \geq c(d) \left( \lv (\aaa) \rv_{X,Y,p}+\nory{\left(\sqrt{\sum_i \aaa^2} \right)_j}+\norx{\left(\sqrt{\sum_j \aaa^2} \right)_i}\right) \right)
\\
&\ \ \ \geq \Pro\left(\left|\sum_{i,j} \aaa X_i Y_j \right| \geq  \frac{1}{2} \lv \sum_{i,j} \aaa X_i Y_j \rv_p \right)\geq \left( 1-\frac{1}{2^p} \right)^2 \left(\frac{\lv \sum_{i,j} \aaa X_i Y_j \rv_p}{\lv \sum_{i,j} \aaa X_i Y_j \rv_{2p}} \right)^{2p}  \geq e^{-c(d)p},
\end{align*}
where the first inequality follows by Theorem \ref{1}, the second by the Paley-Zygmund inequality and the last one by \eqref{rosnie}.
\end{proof}

We formulate undecoupled versions of Theorem \ref{1} and Corollary \ref{taildec}.

\begin{cor} \label{2}
Let $X_1,X_2,\ldots$ be independent r.v's from the $\Sd(d)$ class and $(a_{i,j})$ be a finite matrix such that $a_{i,i}=0$ and $a_{i,j}=a_{j,i}$ for all $i,j$. Then for each $p\geq 1$,
\begin{equation}
\label{momundec}
\lv \sum_{i,j} \aaa X_i X_j \rv_p \sim^{d} \norxx{(\aaa)}+\norx{\left(\sqrt{\sum_i \aaa^2} \right)_j}, 
\end{equation}
\begin{equation}
\label{tail1undec}
\Pro\left(\left|\sum_{i,j} \aaa X_i X_j\right|\geq C(d) \left( \norxx{(\aaa)}+\norx{\left(\sqrt{\sum_i \aaa^2} \right)_j}\right) \right)\leq e^{-p} 
\end{equation}
and 
\begin{equation}
\label{tail2undec}
\Pro\left(\left|\sum_{i,j} \aaa X_i X_j\right|\geq c(d) \left( \norxx{(\aaa)}+\norx{\left(\sqrt{\sum_i \aaa^2} \right)_j}\right) \right)\geq e^{-c(d)p}.
\end{equation}
\end{cor}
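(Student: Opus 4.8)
The plan is to derive Corollary \ref{2} from the decoupled statements (Theorem \ref{1} and Corollary \ref{taildec}) by a standard decoupling argument, so the only real content is passing from $\sum_{i\neq j} a_{i,j} X_i X_j$ to $\sum_{i,j} a_{i,j} X_i Y_j$ where $(Y_j)$ is an independent copy of $(X_i)$. First I would recall the classical decoupling inequalities for tetrahedral (diagonal-free) quadratic chaos: for symmetric independent $X_i$ and any convex function $\Phi$,
$$
\E\, \Phi\!\left(\sum_{i\neq j} a_{i,j} X_i X_j\right) \ \le\ \E\, \Phi\!\left(C\sum_{i,j} a_{i,j} X_i Y_j\right),
$$
and the reverse inequality with a different universal constant, where $(Y_j)$ is an independent copy of $(X_j)$ (see de la Pe\~na--Gin\'e, or Kwapie\'n). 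Applied with $\Phi(t)=|t|^p$ this gives $\momp{\sum_{i\neq j} a_{i,j} X_i X_j} \sim \momp{\sum_{i,j} a_{i,j} X_i Y_j}$ with universal constants, and since $a_{i,i}=0$ the diagonal-free sum equals the full sum $\sum_{i,j} a_{i,j} X_i X_j$. Note that here the second family in the decoupled form is a copy of the first, so $N^Y_j = N^X_j$ and the norms $\norxy{\cdot}$, $\nory{\cdot}$ in Theorem \ref{1} collapse to $\norxx{\cdot}$, $\norx{\cdot}$; moreover by the symmetry hypothesis $a_{i,j}=a_{j,i}$ one has $\sqrt{\sum_i a_{i,j}^2}$ and $\sqrt{\sum_j a_{i,j}^2}$ equal as sequences (up to relabelling the index), so the two sequence-norm terms in Theorem \ref{1} coincide and only one survives. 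Combining these observations with Theorem \ref{1} yields \eqref{momundec}.

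For the tail bounds \eqref{tail1undec} and \eqref{tail2undec} I would repeat verbatim the argument from the proof of Corollary \ref{taildec}: the upper bound is Chebyshev applied to \eqref{momundec}, and the lower bound is Paley--Zygmund combined with the growth-of-moments estimate. The growth estimate $\momp{\sum_{i,j} a_{i,j} X_i X_j}_{2p} \le C(d)\momp{\sum_{i,j} a_{i,j} X_i X_j}_{p}$ needed for Paley--Zygmund follows either directly from \eqref{momundec} by checking that each of the two norms on the right at level $2p$ is $\le C(d)$ times its value at level $p$ (via Lemma \ref{gwiazda}, exactly as in the proof of Corollary \ref{wzrost}), or simply by transporting \eqref{rosnie} through the decoupling equivalence. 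Then
$$
\Pro\!\left(\left|\sum_{i,j} a_{i,j} X_i X_j\right| \ge \tfrac12 \momp{\sum_{i,j} a_{i,j} X_i X_j}\right) \ \ge\ \left(1-2^{-p}\right)^2 \left(\frac{\momp{\sum_{i,j} a_{i,j} X_i X_j}}{\momp{\sum_{i,j} a_{i,j} X_i X_j}_{2p}}\right)^{2p} \ \ge\ e^{-c(d)p},
$$
and combining with the lower bound in \eqref{momundec} gives \eqref{tail2undec}.

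The main obstacle is really just bookkeeping rather than a genuine difficulty: one must make sure the decoupling constants are universal (independent of $d$, the matrix, and $p$) so that they get absorbed into $C(d)$, and one must be careful that the off-diagonal restriction is harmless precisely because $a_{i,i}=0$ is assumed. A minor subtlety worth a sentence is that classical decoupling for chaos of order two holds for all convex $\Phi$, hence in particular for $\Phi(t)=|t|^p$ with $p\ge 1$, so no extra integrability or symmetrization step is needed beyond the symmetry of the $X_i$, which is part of the $\Sd(d)$ assumption. With these points checked, the corollary is immediate from Theorem \ref{1} and the proof scheme of Corollary \ref{taildec}.
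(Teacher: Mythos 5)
Your proposal is correct and takes essentially the same route as the paper: the moment estimate \eqref{momundec} via Kwapie\'n's convex-function decoupling (Theorem \ref{A.8}) applied to the tetrahedral symmetric array together with Theorem \ref{1}, and the tail bounds either by repeating the Chebyshev/Paley--Zygmund argument of Corollary \ref{taildec} or by transporting \eqref{tail1dec}--\eqref{tail2dec} through the de la Pe\~na--Montgomery-Smith tail decoupling (Theorem \ref{A.9}). Your added remarks on the collapse of the norms when $Y$ is a copy of $X$ and on the role of the symmetry $a_{i,j}=a_{j,i}$ are correct bookkeeping that the paper leaves implicit.
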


\begin{proof}
Moment estimate \eqref{momundec} is an immediate consequence of Theorem \ref{1} and the Kwapie{\'n}
decoupling inequalities (Theorem \ref{A.8}).

%
%
%
%
%

We may derive tail bounds from the moment estimates in the similar way as in the undecoupled case. Alternatively we may use the more general decoupling result of de la Pe{\~n}a  and Montgomery-Smith (Theorem \ref{A.9})  and get \eqref{tail1undec} and \eqref{tail2undec} from \eqref{tail1dec} and \eqref{tail2dec}.
\end{proof}

\begin{rem}
A simple approximation argument shows that  Theorem \ref{1} and Corollaries \ref{taildec}, \ref{2} hold for infinite square summable matrices $(a_{i,j})$.
\end{rem}

 We derive some examples from Corollary \ref{2}. Firstly we recover the special case of the Kolesko and Lata\l{}a result \cite[Example 3]{7}.

\begin{ex}\label{ex1}
Let $X_1,X_2,\ldots$ be independent r.v's with symmetric Weibull distribution with scale parameter 1 and shape
parameter $r \in (0, 1]$, i.e. for any $i$ $\Pro(|X_i| \geq t) = \exp(-t^r)$ for $t \geq 0$. Then for any $p\geq 1$ and any square summable matrix $(a_{i,j})$ such that $a_{i,i}=0$ and $a_{i,j}=a_{j,i}$ for all $i,j$ we have
\begin{align}
\lv \sum_{i,j} \aaa X_i X_j \rv_p &\sim^r p^{2/r} \sup_{i,j} |\aaa|+p^{1/r+1/2}\sup_i \sqrt{ \sum_j \aaa^2}+p\sup \left\{ \sqrt{ \sum_i \left(\sum_j \aaa x_j \right)} \ \Big{|} \ \lv x \rv_2 =1 \right\} \nonumber \\
&+\sqrt{p} \sqrt{\sum_{i,j} \aaa^2}. \label{dlugi}
\end{align}
\end{ex}
\begin{proof}
By direct computation one may check that $\lv X_i \rv_{2p} \leq 2^{1/r} \lv X_i \rv_p$ (it may be also checked that \eqref{mc} holds using Remark \ref{kryt}).
First observe that 
\begin{equation}
\norx{(v_i)}\sim \sqrt{p} \lv v \rv_2 + p^{1/r}\sup_i |v_i|. \label{loc11}
\end{equation}
Indeed we have that  $|x|^r\leq x^2$ for $|x|\geq 1$ and $|x|^r>x^2$ for $|x|<1$ so
$$
\norx{(v_i)}\sim \sqrt{p} \lv v \rv_2 + p^{1/r}\sup \left\{\sum_i v_i x_i \ \Big{|} \ \sum_i |x_i|^r \leq 1 \right\}.
$$
Obviously $\sup \left\{\sum_i v_i x_i \ \Big{|} \ \sum |x_i|^r \leq 1 \right\}\geq \sup_i |v_i|$. Since $r\in (0,1]$ we have $\sum_i |x_i|^r \geq ( \sum_i |x_i|)^{1/r}$ and as a result $\sup \left\{\sum_i v_i x_i \ \Big{|} \ \sum |x_i|^r \leq 1 \right\}= \sup_i |v_i|$ and \eqref{loc11} holds.

Iterating \eqref{loc11} we get
\begin{equation}
\norxx{(\aaa)} \sim  p^{2/r} \sup_{i,j} |\aaa|+p^{1/r+1/2}\sup_i \sqrt{ \sum_j \aaa^2}+p\sup \left\{ \sqrt{ \sum_i \left(\sum_j \aaa x_j \right)} \ \Big{|} \ \lv x \rv_2 =1 \right\}. \label{loc111}
\end{equation}

 The inequality \eqref{dlugi} follows by Corollary \ref{2}, \eqref{loc11} and \eqref{loc111}.
\end{proof}

Next example presents a situation when tails of $X_i$ are neither log-concave nor log-convex, so it cannot be deduced from previous results.

\begin{ex}\label{ex2}
Let $X_1,X_2,\ldots$ be i.i.d r.v's with distribution equal to $W\1_{|W|\leq R}$, where $R>1$ and $W$ be a symmetric Weibull distribution with scale parameter 1 and shape
parameter $r \in (0, 1]$. Assume that $(a_{i,j})_{i,j\geq 0}$ is a square summable matrix such that $a_{i,i}=0,\ a_{i,j}=a_{j,i}$. Denote  $A_i=\sqrt{\sum_j a_{i,j}^2}$ and
\begin{align*}
||| (a_{i,j}) |||_{R,r,p}&= \sup \left\{\sum_{i,j} \aaa x_i y_j \ \Big{|} \ \lv x \rv_2^2 \leq p,\ \lv x \rv_\infty \leq R,\ \lv y \rv_2^2 \leq p,\ \lv y \rv_\infty \leq R \right\}\\
&+ \sup \left\{\sum_{i,j} \aaa x_i y_j \ \Big{|} \ \lv x \rv_r^r \leq p,\ \lv x \rv_\infty \leq R,\ \lv y \rv_2^2 \leq p,\ \lv y \rv_\infty \leq R \right\} \\
&+ \sup \left\{\sum_{i,j} \aaa x_i y_j \ \Big{|} \ \lv x \rv_r^r \leq p,\ \lv x \rv_\infty \leq R,\ \lv y \rv_r^r \leq p,\ \lv y \rv_\infty \leq R \right\}.
\end{align*}
 Then
\begin{align*}
\lv \sum_{i,j} a_{i,j} X_i X_j \rv_p \sim \begin{cases}||| (a_{i,j}) |||_{R,r,p}+\sqrt{p}\sqrt{\sum_{i,j} a^2_{i,j}}+p^{\frac{1}{r}}A^*_1 &\textrm{for } 1\leq p \leq R^r \\
 ||| (a_{i,j}) |||_{R,r,p}+\sqrt{p}\sqrt{\sum_{i,j} a^2_{i,j}}+R \sum_{i \leq \frac{p}{R^r}}A^*_i &\textrm{for } R^r<p\leq R^2 \\
 ||| (a_{i,j}) |||_{R,r,p}+\sqrt{p} \sqrt{\sum_{i\geq \frac{p}{R^2}}(A^*_i)^2}+R\sum_{i\leq \frac{p}{R^r}} A^*_i &\textrm{for } R^2<p, \end{cases}
\end{align*}
where $(A^*_i)$ is a nonincreasing rearrangement of $(A_i)$.
\end{ex}

\begin{proof}
The assumptions of Corollary \ref{2} are satisfied since $\lv X_1 \rv_2 \sim^{r,R} 1$ and $X_1,X_2,\ldots$ satisfies \eqref{mc} with $\al=\al(r,R)$ (see Remark \ref{kryt}).
Iteration of the inequality \eqref{por} gives $||| (a_{i,j}) |||_{R,r,p}\sim \norxx{(\aaa)}$.
Corollary \ref{2} and Lemma \ref{osz} implies the assertion.

\end{proof}

\begin{rem}
In the Gaussian  and Rademacher case  Corollary \ref{2}  implies (see also Examples 1 and 2 in \cite{Some})
\begin{align}
\lv \sum_{i,j} \aaa g_i g_j \rv_p &\sim p \lv (\aaa) \rv_{l^2 \rightarrow l^2}+\sqrt{p} \lv (\aaa) \rv_2, \label{a1}\\
\lv \sum_{i,j} \aaa \eps_i \eps_j \rv_p &\sim \sup \left\{\sum_{i,j} \aaa x_i y_j \ \Big{|} \ \lv x \rv^2_2 \leq p, \lv y \rv^2_2\leq p,\lv x \rv_\infty \leq 1,\lv y \rv_\infty \leq 1 \right\}\label{a2}\\
&\ +\sum_{i\leq p}A^*_i+\sqrt{p}\sqrt{\sum_{i>p} (A^*_i)^2},
\end{align}
where $A^*_i$ is nonincreasing rearrangement of $A_i=\sqrt{\sum_j \aaa^2}$.
Neither \eqref{a1} nor \eqref{a2} can be expressed by a closed formula which do not involves suprema. Thus, there is no hope for any closed formulas in Examples \ref{ex1} and \ref{ex2}.
\end{rem}

The paper is organized as follows. In the next section we present some technical facts used in the main proof. In particular we specify what does it mean to "replace variables $X_i$ by products of independent variables
with log-concave tails". In Section \ref{sekcja3} we establish Gluskin-Kwapie\'n-type bounds for moments of linear combinations of $X_i$'s. 
In Section \ref{sekcja4} we obtain bounds for expected values 
of suprema and conclude the proof of Theorem \ref{1} in Section \ref{sekcja5}.
Unfortunately the proof of Theorem \ref{1} is very technical and depend on several technical results from many previous works. For the convenience of the reader we gather them in the Appendix, making our exposition self-contained.


\section{Preliminary facts}\label{prel}
We start with the crucial technical result from \cite{Mel}.

\begin{lem}\label{5}
If $X$ is from the $\Sd(d)$ class then there exists  symmetric i.i.d r.v's $X^1,\ldots,X^d$ on the extended probability space and a constant $t_0(d) \geq 1$ with the following properties:
\begin {align} 
&C(d)(|X|+1) \geq |X^1\cdot \ldots \cdot X^d| \  \textrm{ and } \ C(d)(|X^1\cdot \ldots \cdot X^d|+1) \geq |X| \label{eq2}, \\
&X^1,\ldots,X^d \textrm{ have log-concave tails} \label{eq4}, \\
&M(t) \leq N(t^d) \leq M(C(d)t) \textrm{ for } t\geq t_0(d), \textrm{ where } M(t)=-\ln \Pro(|X^1| \geq t) \label{eq5}, \\
&\frac{1}{C(d)} \leq \E |X^1| \leq C(d) \label{eq6}, \\
&\inf\left\{t>0 \ | \ M(t)\geq 1 \right\}=1. \label{jed}
\end{align}
\end{lem}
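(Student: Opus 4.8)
The plan is to construct the i.i.d.\ factors $X^1,\dots,X^d$ directly by prescribing a tail (equivalently, a function $M$) and then to check the five listed properties. Write $N = N^X$ for the ``log-profile'' of $X$ as in \eqref{nx}. The key structural fact to invoke is that, since $X \in \Sd(d)$ satisfies $\lv X \rv_{2p} \leq 2^d \lv X \rv_p$, the function $N(t)$ is (up to multiplicative constants in the argument) \emph{log-concave-like}: more precisely, the moment doubling condition forces $t \mapsto N(t)$ to be comparable to a convex, increasing function of $\ln t$, or equivalently $\Pro(|X|\ge t)$ is comparable to $e^{-N(t)}$ with $N$ ``$2^d$-regularly growing''. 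This is exactly the kind of statement established in \cite{Mel}; I would quote it. Given such an $N$, define $M$ on $[1,\infty)$ by solving $M(t) = N(t^d)$ for $t \ge t_0(d)$ and extending $M$ to a genuine log-concave tail profile on $[0,\infty)$ by setting $M(t) = t^2$ for small $t$ (matching the normalization \eqref{jed}), interpolating on the bounded range $[1,t_0(d)]$ so that $M$ stays convex and $M(1)$ has the right value. Then let $X^1,\dots,X^d$ be i.i.d.\ symmetric with $-\ln\Pro(|X^1|\ge t) = M(t)$; property \eqref{eq4} holds by construction and \eqref{eq5}, \eqref{jed} are immediate from the definition.

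Next I would verify the comparison \eqref{eq2} between $|X|$ and the product $P := |X^1\cdots X^d|$. The natural route is via tails: one shows $-\ln\Pro(P \ge s)$ is comparable, for $s$ large, to $M(s^{1/d}) = N(s)$, which is $-\ln\Pro(|X|\ge s)$ up to constants in the argument. The point is that for i.i.d.\ log-concave-tailed variables the tail of a product of $d$ of them is governed, again up to a $C(d)$ factor in the level, by the ``diagonal'' event that all $d$ factors are of size $\approx s^{1/d}$; this is a standard product-tail estimate (the log-concavity of $M$ makes the diagonal the dominant contribution, and the reverse bound is the union bound). Once the tails match up to constants in the argument, \eqref{eq2} follows from the elementary fact that two nonnegative random variables whose tail functions satisfy $\Pro(U \ge C s)\le \Pro(V\ge s)\le \Pro(U\ge s/C)$ can be coupled (on an enlarged space, via quantile coupling) so that $U \le C(V+1)$ and $V \le C(U+1)$ pointwise. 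This is where the ``extended probability space'' in the statement is used.

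Finally, \eqref{eq6} is a short computation: $\E|X^1| = \int_0^\infty \Pro(|X^1|\ge t)\,dt = \int_0^\infty e^{-M(t)}\,dt$, and since $M(t)=t^2$ near $0$, $M$ is convex, and $M(1) \sim 1$ by \eqref{jed}, this integral is bounded above and below by absolute constants depending only on $d$ (through $t_0(d)$ and the interpolation on $[1,t_0(d)]$). I would also double-check that the normalization $\lv X\rv_2 = 1/e$ built into the $\Sd(d)$ class is what pins down the additive constant in $M$ near $t=1$ consistently with \eqref{jed}; this is the role of the remark ``the constant $1/e$ is chosen for technical reasons.''

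The main obstacle is the tail-of-a-product estimate underlying \eqref{eq2}: getting \emph{both} directions of $-\ln\Pro(|X^1\cdots X^d|\ge s) \sim^d N(s)$ with only a constant distortion in the argument, uniformly in the shape of $M$, requires using log-concavity of $M$ carefully (for the lower bound on the tail one restricts to the diagonal block where each factor lies in a dyadic window around $s^{1/d}$; for the upper bound one decomposes $\{P\ge s\}$ over such windows and sums, using convexity of $M$ to control the geometric series). Everything else — the construction of $M$, properties \eqref{eq4}, \eqref{eq5}, \eqref{jed}, \eqref{eq6}, and the quantile coupling — is routine. Since this lemma is quoted verbatim from \cite{Mel}, in the paper itself it would suffice to cite that reference and sketch only the construction of $M$.
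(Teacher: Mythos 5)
Your proposal and the paper part ways at the very first step: the paper does not reconstruct the factors at all. It invokes Lemma 3.3 of \cite{Mel} verbatim, which already supplies symmetric i.i.d.\ variables $X^1,\dots,X^d$ satisfying \eqref{eq2}--\eqref{eq6} together with the additional information that $M\equiv 0$ on $[0,t_0(d))$. The only new content of Lemma \ref{5} is the normalization \eqref{jed}, and the paper obtains it by a three-line rescaling argument: $\inf\{t>0 \,|\, M(t)\ge 1\}$ is at least $t_0(d)$ because $M$ vanishes below $t_0(d)$, and at most $3\E|X^1|\le C(d)$ because Chebyshev gives $M(3\E|X^1|)\ge\ln 3>1$; dividing each factor by this infimum preserves \eqref{eq2}--\eqref{eq6} (with new constants) and forces \eqref{jed}. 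You do observe at the end that citing \cite{Mel} would suffice, but your construction tries to build \eqref{jed} in from the start rather than obtaining it by rescaling, and you never isolate the quantity $\inf\{t \,|\, M(t)\ge 1\}$ that the paper actually normalizes.

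Taken as a standalone argument, your reconstruction leaves the two genuinely hard steps unproved. First, the claim that $N(t^d)$ can be sandwiched, up to a $C(d)$ dilation of the argument, between values of a single convex function $M$ is exactly the content of \cite[Lemma 3.3]{Mel} and is where the hypothesis $\lv X\rv_{2p}\le 2^d\lv X\rv_p$ is actually consumed: for a Weibull variable with shape $r$ the function $N(t^d)=t^{rd}$ admits such a convex comparison only when $rd\ge 1$, which is precisely what membership in $\Sd(d)$ guarantees. Your substitute regularity statement (``$N$ comparable to a convex function of $\ln t$'') is not the right one and would not rule out this obstruction. Second, the two-sided product-tail estimate $-\ln\Pro(|X^1\cdots X^d|\ge s)\sim^d M(s^{1/d})$ with distortion only in the argument is asserted rather than proved (the upper bound on the product tail is not a one-line union bound), and even granting it, the quantile coupling must be arranged so that the $d$ factors remain i.i.d.: one has to couple the product with $|X|$ and then draw $(X^1,\dots,X^d)$ from the regular conditional distribution given their product. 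The construction in \cite{Mel} sidesteps all of this by defining the $X^k$ as explicit functions on an enlarged space so that \eqref{eq2} holds pointwise by construction. Your outline could be completed, but each of these steps needs a genuine proof, whereas the paper's proof needs none of them.
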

\begin{proof}
From Lemma $3.3$ in \cite{Mel} we know that there exists symmetric i.i.d r.v's $X^1,\ldots,X^d$ which satisfy \eqref{eq2}-\eqref{eq6} and $M(t)=0$ for $t<t_0(d)$ where $t_0(d)>0$ ( see formula $(8)$ in \cite{Mel}). So  $\inf\left\{t>0 \ | \ M(t)\geq 1 \right\} \geq t_0(d)$. By Chebyshev's inequality $M(3\E |X^1|)\geq \ln(3)>1$. Combining it with \eqref{eq6} yields
$$\inf\left\{t>0 \ | \ M(t)\geq 1 \right\} \leq 3 \E |X^1| \leq C(d). $$
So we have proved that
$$0<c(d)\leq \inf\left\{t>0 \ | \ M(t)\geq 1 \right\} \leq C(d)<\infty.$$
The variables $X_i/\inf\left\{t>0 \ | \ M(t)\geq 1 \right\}$ satisfy \eqref{eq2}-\eqref{jed}.
\end{proof}

 Till the end of the paper  we assign to every $X_i$ from the $\Sd(d)$ class the r.v's $X^1_i,\ldots,X^d_i$ obtained by Lemma \ref{5}.

 Denote for $t\in \R$, $M^X_i(t)=-\ln \Pro \left(|X^1_i|\geq |t| \right) \in [0,\infty]$ and
\begin{equation}
\m(t)=\begin{cases}t^2 & \textrm{ for } |t|<1, \\ M^X_i(t) &\textrm{ for } |t|\geq 1.  \end{cases} \label{m}
\end{equation}

Observe that convexity of $M^X_i$ and the normalization condition \eqref{jed} imply
\begin{align}
\label{ulubionaetykieta1}
\hat{M}_i^X(\frac{t}{u})\leq \frac{\hat{M}_i^X(t)}{u} \mbox{ for } u\geq 1,
\end{align}
and 
\begin{align}
\label{ulubionaetykieta}
\hat{M}_i^X(t)=M^X_i(t)\geq |t| \mbox{ for } |t|\geq 1.
\end{align}

We define the following technical norms (the proof that they are norms is the same as for $\norxy{\ \cdot \ }$, see Lemma \ref{spraw})
\begin{align}
\robnorm{(a_i)}{1}&=\sup \left\{\sum_{i} a_i  x_i  \ \Big{|} \  \sum_{i} \m(x_i)\leq p \right\},  \\
\robnorm{(a_i)}{d}&=\sup \left\{\sum_{i} a_i  x^1_i \prod_{k=2}^d(1+x^k_i) \ \Big{|} \ \forall_{k=1,\ldots,d} \sum_{i} \m(x^k_i)\leq p \right\} \textrm{ for } d>1. \label{robocza}
\end{align}

\begin{lem} \label{porsum}
For any $p\geq 1$ we have
$$ \norx{(a_i)}\sim^d \robnorm{(a_i)}{d}.$$
\end{lem}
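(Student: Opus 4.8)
The plan is to show both inequalities $\robnorm{(a_i)}{d}\lesssim^d \norx{(a_i)}$ and $\norx{(a_i)}\lesssim^d \robnorm{(a_i)}{d}$ by directly relating the constraint sets in the two suprema, using the relation \eqref{eq5} between $N^X_i$ and $M^X_i$ established in Lemma \ref{5}. The key dictionary is: for $|t|\ge 1$ we have $\hat N^X_i(t)=N^X_i(t)$ and, by \eqref{eq5}, $N^X_i(s^d)\sim^d M^X_i(s)=\hat M^X_i(s)$ for $s\ge t_0(d)$; near the origin both hatted functions are just $t^2$. So a single variable $x_i$ with $\hat N^X_i(x_i)\le p$ should correspond, up to constants, to a product $x^1_i\cdots x^d_i$ (all coordinates roughly equal to $|x_i|^{1/d}$) with $\sum_i \hat M^X_i(x^k_i)\le C(d)p$ for each $k$, and conversely. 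The factors $(1+x^k_i)$ in \eqref{robocza} rather than $x^k_i$ are there precisely to absorb the additive $+1$'s in \eqref{eq2} and the low-range behaviour where the log-concave tail is trivial.

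First I would prove $\norx{(a_i)}\le C(d)\robnorm{(a_i)}{d}$. Take a feasible $(x_i)$ for the left supremum, i.e. $\sum_i \hat N^X_i(x_i)\le p$. For each $i$ set $s_i:=|x_i|^{1/d}\vee 1$ or, more carefully, split the index set according to whether $|x_i|\le t_0(d)^d$ or not. On the "large" indices put $x^k_i:=\sgn(\text{appropriate})\,|x_i|^{1/d}$; then $x^1_i\prod_{k\ge2}(1+x^k_i)\ge c(d)|x_i|$ and, by the dictionary above, $\sum_i \hat M^X_i(x^k_i)\le C(d)\sum_i N^X_i(x_i)\le C(d)p$. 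On the "small" indices, where $|x_i|$ is bounded by a constant depending on $d$, one checks that $\sum_i \hat N^X_i(x_i)=\sum_i \min(x_i^2,\dots)$ controls $\sum_i (x_i^{(k)})^2$ for a bounded choice of coordinates, so these indices contribute at most a constant multiple of the right-hand norm as well (here one uses $\hat N^X_i(t)\ge t^2$ for $|t|\le 1$ and monotonicity). Summing the two contributions and rescaling $p$ by the constant $C(d)$ (harmless since $\robnorm{\cdot}{d}$ is, up to constants, monotone and homogeneous of the right order in $p$ — this follows from \eqref{ulubionaetykieta1}) gives the bound.

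For the reverse inequality $\robnorm{(a_i)}{d}\le C(d)\norx{(a_i)}$, take $(x^k_i)_{k=1}^d$ feasible for the right supremum, $\sum_i \hat M^X_i(x^k_i)\le p$ for each $k$. Expand the product $x^1_i\prod_{k=2}^d(1+x^k_i)$ into $2^{d-1}$ monomials, each of the form $\prod_{k\in S}x^k_i$ with $1\in S\subseteq[d]$. For each fixed subset $S$, bound $\sum_i a_i\prod_{k\in S}x^k_i$ by $C(d)\norx{(a_i)}$: setting $y_i:=\prod_{k\in S}x^k_i$, one shows $\sum_i \hat N^X_i(y_i)\le C(d)p$ using the dictionary in the other direction — when all $|x^k_i|\ge t_0(d)$, $\hat N^X_i(\prod_{k\in S}x^k_i)\le \hat N^X_i((\max_k|x^k_i|)^{|S|})\le \hat N^X_i((\max_k|x^k_i|)^{d})\le C(d)\hat M^X_i(\max_k|x^k_i|)\le C(d)\sum_{k}\hat M^X_i(x^k_i)$, and the small-coordinate regime is again handled by the $t^2$ comparison together with $|S|\ge 1$ and boundedness. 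Then sum over the $2^{d-1}\le C(d)$ monomials.

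The main obstacle I expect is bookkeeping the interaction between the "small" range $|t|<1$ (or $<t_0(d)$), where $\hat M^X_i$ and $\hat N^X_i$ are quadratic and the log-concave tail carries no information, and the genuine tail range, where \eqref{eq5} applies. One has to choose the splitting threshold ($t_0(d)$, possibly raised to the $d$-th power) consistently in both directions and verify that in the crossover zone the constants depending on $d$ still close up — in particular that $\hat M^X_i$ being only defined as $t^2$ below $1$ does not clash with \eqref{eq5} holding only for $t\ge t_0(d)$. The convexity/normalization facts \eqref{ulubionaetykieta1} and \eqref{ulubionaetykieta}, together with $\E|X^1|\sim^d 1$ from \eqref{eq6}, are exactly what is needed to patch the crossover and to absorb the rescaling of $p$; I would isolate these estimates as a short preliminary claim ("for all $t$, $c(d)\hat M^X_i(t)\le \hat N^X_i(\,(|t|\vee1)^d\,)\le C(d)\hat M^X_i(t)$", or a clean one-variable version thereof) before doing the two-sided norm comparison, so that the main argument becomes purely combinatorial expansion of the product plus this dictionary.
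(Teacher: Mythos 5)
Your overall architecture coincides with the paper's: split coordinates by magnitude, use \eqref{eq5} as a dictionary between $N^X_i$ and $M^X_i$, and expand $x^1_i\prod_{k\geq 2}(1+x^k_i)$ into monomials that are reduced, AM--GM style, to a single power $(x^1_i)^d$. The forward direction $\norx{(a_i)}\leq C(d)\robnorm{(a_i)}{d}$ is essentially right, except that your treatment of the intermediate range $1\leq |x_i|< t_0(d)^d$ is too vague: there $\nx(x_i)=N^X_i(x_i)$ is not $x_i^2$, and what is actually needed is the cardinality bound $|\{i: |x_i|\geq 1\}|\leq p$, which follows from $\nx(t)\geq 1$ for $|t|\geq 1$ (Chebyshev together with the normalization $\lv X_i \rv_2=1/e$); the paper isolates this range as a separate term for exactly this reason.

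The genuine gap is in the reverse direction. The ``clean one-variable dictionary'' you propose to isolate, $\nx((|t|\vee 1)^d)\leq C(d)\,\m(t)$, and the step $\nx((\max_k|x^k_i|)^d)\leq C(d)\,\m(\max_k|x^k_i|)$ in your chain, do not follow from \eqref{eq5} and are false in general. What \eqref{eq5} gives is $N^X_i(t^d)\leq M^X_i(C(d)t)$, and since $M^X_i$ is convex and vanishes at the origin one has $M^X_i(C(d)t)\geq C(d)M^X_i(t)$ (this is \eqref{ulubionaetykieta1}), i.e.\ the wrong direction; moreover $M^X_i(C(d)t)$ may be infinite (bounded $X^1_i$) or exponentially larger than $M^X_i(t)$, so no inequality of the form $M^X_i(C(d)t)\leq C'(d)M^X_i(t)$ is available. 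Consequently your bound $\sum_i \nx(y_i)\leq C(d)p$ for $y_i=\prod_{k\in S}x^k_i$ does not follow as written. The paper's fix is to rescale the variable rather than the function: starting from the constraint $\sum_i \m(x^1_i)\leq p$ one substitutes $x^1_i=C(d)y_i$, so the constraint reads exactly $\sum_i \m(C(d)y_i)\leq p$ and \eqref{eq5} applies verbatim to give $\sum_i \nx(y_i^d)\leq \sum_i \m(C(d)y_i)\leq p$, with no comparison of $M^X_i(C(d)\cdot)$ to $M^X_i(\cdot)$ ever needed; the only cost is the factor $C(d)^d$ in the objective coming from $(x^1_i)^d=C(d)^d y_i^d$. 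You should replace your preliminary claim by this substitution argument, keeping in the forward direction only the one-sided bound $M^X_i(s)\leq N^X_i(s^d)$, which is the half of \eqref{eq5} that does hold with constant $1$.
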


\begin{proof}
Without loss of generality we can  assume that $a_i$ are nonnegative. Let $t_0(d)$ be a constant from Lemma \ref{5}. We have
\begin{align*}
\norx{(a_i)} &\leq \sup \left\{\sum_i a_i b_i \1_{\{0\leq b_i<1\}} \ \Big{|} \ \sum_i \nx(b_i) \leq p \right\}+ \sup \left\{\sum_i a_i b_i \1_{\{t_0(d)^d>b_i\geq 1\}} \ |\ \sum_i \nx(b_i) \leq p \right\} \\
&\ \ \ +\sup \left\{\sum_i a_i b_i \1_{\{b_i\geq t_0(d)^d\}} \ |\ \sum_i \nx(b_i) \leq p \right\}=:I+I\!I+I\!I\!I.
\end{align*}

The equality $\nx(t)=\mx(t)$ for $|t|\leq 1$ implies

$$I\leq \sup \left\{\sum_i a_i x^1_i \ \Big{|} \  \sum_i \m(x^1_i) \leq p \right\}\leq \robnorm{(a_i)}{d}.$$
 
Since $\lv X_i \rv_2=1/e$, Chebyshev's inequality yields $\nx(s)\geq 1$ for $s\geq 1$. Hence
\begin{align*}
I\!I&= \sup \left\{ \sum_{i \in I} a_i b_i \1_{\{t_0(d)^d>b_{i}\geq 1\}} \ \Big{|} \ \sum_{i \in I} \nx(b_i) \leq p,\ |I|\leq \lfloor p \rfloor \right\} \leq t_0(d)^d \sup_{|I|=\lfloor p \rfloor}\sum_{i \in I} a_i \leq t_0(d)^d \robnorm{(a_i)}{d}. 
\end{align*}

To see the last inequality it is enough to take in \eqref{robocza} $x^1_i=\1_{\{i \in I \}}$ and $x^2_i=\ldots=x^d_i=0$.

From \eqref{eq5}  we obtain
\begin{align*}
I\!I\!I&\leq  \sup \left\{\sum_i a_i \left(b_i\right)^d \ | \ \sum_i \m(b_i) \leq p \right\} \leq \robnorm{(a_i)}{d},
\end{align*}
where to get the last inequality we take $x^1_i=\ldots=x^d_i=b_i$.

It remains to show
\begin{equation}
 \robnorm{(a_i)}{d} \leq C(d) \norx{(a_i)}. \label{l.1}
\end{equation}

By an easy computation
\begin{align}
\robnorm{(a_i)}{d} &\leq (1+C(d)t_0(d))^{d-1} \sup \left\{\sum_i a_i x^1_i \prod_{k=2}^d (1+x^k_i \1_{\{x^k_i>C(d)t_0(d)\}})\ \Big{|} \ \forall_{k} \sum_i \m(x^k_i) \leq p \right\} \nonumber \\
&\leq C(d) \sup \left\{\sum_i a_i x^1_i \1_{\{0 \leq x^1_i\leq C(d)t_0(d)\}}\ \Big{|} \ \sum_i \m(x^1_i) \leq p \right\}  \nonumber \\
&\ \ \ + C(d)\summ{I\subset [d]}{I \neq \emptyset} \sup \left\{\sum_i a_i \prod_{k\in I} x^k_i \1_{\{x^k_i>C(d)t_0(d)\}}\ \Big{|} \ \forall_{k} \sum_i \m(x^k_i) \leq p \right\} . \label{l.2}
\end{align}

Putting $y_i=x^1_i / (C(d) t_0(d))$ we see that
\begin{align}
&\sup \left\{\sum_i a_i x^1_i \1_{\{0 \leq x^1_i\leq C(d) t_0(d)\}}\ \Big{|} \ \sum_i \m(x^1_i) \leq p \right\} \nonumber \\
&= C(d) t_0(d) \sup \left\{\sum_i a_i y_i \1_{\{0 \leq y_i\leq 1\}}\ \Big{|} \ \sum_i \m(C(d)t_0(d)y_i) \leq p \right\}  \nonumber \\
&\leq C(d)\sup \left\{\sum_i a_i y_i \1_{\{0 \leq y_i\leq 1\}}\ \Big{|} \ \sum_i \nx(y_i) \leq p \right\} \leq C(d) \norx{(a_i)}, 
\end{align}
where the first inequality follows by the monotonicity of the functions $(\mx)_i$.


 Now we estimate the second term in \eqref{l.2}. For a $I\subset [d],\ I\neq \emptyset$,
\begin{align}
& \sup \left\{ \sum_i a_i \prod_{k\in I} x^k_i \1_{\{x^k_i>C(d) t_0(d)\}}\ \Big{|} \ \forall_{k} \sum_i \m(x^k_i) \leq p \right\} \nonumber \\
&\leq \sup \left\{ \sum_i a_i \frac{1}{|I|} \sum_{k\in I} (x^k_i)^{|I|} \1_{\{x^k_i>C(d) t_0(d)\}}\ \Big{|} \ \forall_{k} \sum_i \m(x^k_i) \leq p \right\} \nonumber \\
&\leq  \sup \left\{ \sum_i a_i (x^1_i)^{d} \1_{\{x^1_i>C(d) t_0(d)\}}\ \Big{|} \  \sum_i \m(x^1_i) \leq p \right\} \nonumber \\
& \leq C(d) \sup \left\{ \sum_i a_i (x^1_i)^{d} \1_{\{ x^1_i> t_0(d)\}}\ \Big{|} \  \sum_i \m(C(d)x^1_i) \leq p \right\} \nonumber \\
&\leq C(d)  \sup \left\{ \sum_i a_i x^1_i \1_{\{x^1_i>(t_0(d))^d\}}\ \Big{|} \  \sum_i \nx(x^1_i) \leq p \right\}\leq C(d) \norx{a_i}, \label{l.4}
\end{align}
where to get the fourth inequality we used \eqref{eq5}. Estimates \eqref{l.2}-\eqref{l.4} imply \eqref{l.1}.
\end{proof}

\begin{lem} \label{gwiazda}
There exists $C=C(d)$ such that for any $p,u\geq 1$ we have
$$\lv (a_i) \rv_{X,up} \leq C(d) u^d \lv (a_i) \rv_{X,p}.$$
\end{lem}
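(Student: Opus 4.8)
The statement I want is $\lv (a_i) \rv_{X,up} \leq C(d) u^d \lv (a_i) \rv_{X,p}$ for all $p,u \geq 1$. By Lemma \ref{porsum} it suffices to prove the analogous bound for the technical norm $\robnorm{\cdot}{d}$, i.e. $\robnorm{(a_i)}{up} \leq C(d) u^d \robnorm{(a_i)}{p}$, up to adjusting constants. So the plan is: first reduce to $\robnorm{\cdot}{d}$, then exploit the scaling properties \eqref{ulubionaetykieta1} and \eqref{ulubionaetykieta} of the functions $\hat M^X_i$ to rescale an extremal sequence for the $up$-constraint into an admissible sequence for the $p$-constraint.

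\textbf{Key steps.} Fix nonnegative $a_i$ (WLOG) and take sequences $(x^k_i)_{k=1}^d$ with $\sum_i \m(x^k_i) \leq up$ for every $k$, nearly attaining $\robnorm{(a_i)}{up}$. For each $k$ set $y^k_i = x^k_i / u^{1/? }$... more precisely, I would split each coordinate into its ``small'' part $x^k_i \1_{\{|x^k_i| \leq 1\}}$ and its ``large'' part $x^k_i \1_{\{|x^k_i| > 1\}}$. On the small part $\m(t)=t^2$, so scaling $x^k_i \mapsto x^k_i/\sqrt u$ divides the constraint by $u$ and the contribution by $\sqrt u$; on the large part \eqref{ulubionaetykieta1} gives $\m(x^k_i/u) \leq \m(x^k_i)/u$, so scaling by $1/u$ divides the constraint by $u$ while dividing the contribution by $u$ (better than $\sqrt u$ on that piece). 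In either case, after replacing $x^k_i$ by $x^k_i/\sqrt u$ (the worse of the two factors), each rescaled sequence satisfies $\sum_i \m(\cdot) \leq p$. Since the expression in \eqref{robocza} is $\sum_i a_i x^1_i \prod_{k=2}^d (1+x^k_i)$, and each of the $d$ factors $x^1_i$, $1+x^2_i$, \dots, $1+x^d_i$ loses at most a factor $\sqrt u$ under this rescaling (using $1 + x^k_i/\sqrt u \geq (1+x^k_i)/\sqrt u$ for $u \geq 1$), the whole product loses at most $u^{d/2}$. That would give exponent $d/2$, which is better than claimed — but one has to be careful about the ``$1+$'' terms and the small/large splitting interacting; handling the large part more crudely (scaling by $1/u$ throughout) still yields at worst $u^d$, and a clean uniform bound of $u^d$ is all we need. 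So I would simply use the crude scaling $x^k_i \mapsto x^k_i/u$: on $\{|x^k_i|\le 1\}$ this gives $\m(x^k_i/u) \le u^{-2}\m(x^k_i) \le u^{-1}\m(x^k_i)$, on $\{|x^k_i|>1\}$ it gives $\m(x^k_i/u)\le u^{-1}\m(x^k_i)$ by \eqref{ulubionaetykieta1}; either way $\sum_i \m(x^k_i/u) \le p$, and $1 + x^k_i/u \ge u^{-1}(1+x^k_i)$, $x^1_i/u = u^{-1}x^1_i$, so the product drops by exactly $u^{-d}$. Taking the supremum over such rescaled sequences bounds $u^{-d}\robnorm{(a_i)}{up}$ by $\robnorm{(a_i)}{p}$.

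\textbf{Assembling the bound.} Combining the two applications of Lemma \ref{porsum} (one at level $p$, one at level $up$) with the inequality $\robnorm{(a_i)}{up} \leq u^d \robnorm{(a_i)}{p}$ just established gives $\lv (a_i) \rv_{X,up} \leq C(d)\robnorm{(a_i)}{up} \leq C(d) u^d \robnorm{(a_i)}{p} \leq C(d) u^d \lv (a_i) \rv_{X,p}$, which is the claim with a new constant $C(d)$.

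\textbf{Main obstacle.} The only delicate point is that the constraint in $\robnorm{\cdot}{d}$ involves the truncated function $\m$, which is genuinely quadratic below $1$ and only convex-superlinear above $1$; one must verify that the single rescaling $t \mapsto t/u$ respects the constraint on both regimes simultaneously, and this is exactly where \eqref{ulubionaetykieta1} (convexity plus normalization $\inf\{t : M(t)\ge 1\}=1$, so that $\m$ agrees with a convex function vanishing at $0$) is needed. Everything else is bookkeeping with the product structure of the $d$ factors.
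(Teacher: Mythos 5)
Your proposal is correct and follows essentially the same route as the paper: reduce to the technical norm via Lemma \ref{porsum}, rescale the extremal sequences by $1/u$, use \eqref{ulubionaetykieta1} to verify the constraint at level $p$, and absorb the loss of a factor $u$ in each of the $d$ factors of the product. The small/large case split you discuss is already subsumed in \eqref{ulubionaetykieta1} as stated, so your argument is just a slightly more explicit version of the paper's one-line computation.
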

\begin{proof}
By Lemma \ref{porsum} it is enough to show the following inequality
\begin{align*}
\robnormp{(a_i)}{d}{up} \leq   u^d \robnormp{(a_i)}{d}{p}.
\end{align*}
The  inequality \eqref{ulubionaetykieta1} yields
\begin{align*}
\robnormp{(a_i)}{d}{up} &\leq  \sup\left\{ \sum_i a_i x^1_i \prod_{k=2}^d(1+x^k_i)\ \Big{|} \ \forall_k \sum_i \m(\frac{x^k_i}{u}) \leq p \right\} \\
&=  \sup \left\{\sum_i a_i (ux^1_i)\prod_{k=2}^d(1+ux^k_i)\ \Big{|} \ \forall_k \sum_i \m(x^k_i) \leq p  \right\} \leq  u^d \robnormp{(a_i)}{d}{p}.
\end{align*}
\end{proof}

%
%
%
%
%
%
%

\section{Moment estimates in the one dimensional case}\label{sekcja3}

In this section we will show two-sided bound for moments of linear combinations of r.v's from the $\Sd(d)$ class. 
\begin{twr}\label{jeden}
Let $X_1,X_2,\ldots$ be independent, symmetric random variables from the $\Sd(d)$ class. Then for any $p\geq 1$ and any finite sequence $(a_i)$ we have
\begin{equation}
\lv \sum_i a_i X_i \rv_p \sim^d \lv (a_i) \rv_{X,p}. \label{lepwzor}
\end{equation}

\end{twr}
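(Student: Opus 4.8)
The plan is to prove the two-sided bound $\lv \sum_i a_i X_i \rv_p \sim^d \robnorm{(a_i)}{d}$, since Lemma \ref{porsum} identifies $\robnorm{(a_i)}{d}$ with $\norx{(a_i)}$ up to constants depending on $d$. The basic idea, following \cite{Mel}, is to replace each $X_i$ by the product $X^1_i \cdots X^d_i$ of i.i.d.\ symmetric log-concave variables furnished by Lemma \ref{5}. By \eqref{eq2} we have $|X_i| \leq C(d)(|X^1_i \cdots X^d_i| + 1)$ and conversely, so up to constants and an additive lower-order term we may work with $\sum_i a_i X^1_i \cdots X^d_i$. The additive ``$+1$'' terms will contribute something controlled by $\lv \sum_i a_i \eps_i \rv_p$ (after symmetrization), which is in turn dominated by $\robnorm{(a_i)}{d}$ by taking $x^1_i$ supported on a set of size $\lfloor p \rfloor$ in the definition \eqref{robocza}, exactly as in the estimate of term $I\!I$ in the proof of Lemma \ref{porsum}.

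Next I would expand the product $X^1_i \cdots X^d_i$. Writing $X^k_i = \E|X^k_i| \cdot \mathrm{sgn} + (\text{centered part})$ is not quite the right move; instead, following the structure of \eqref{robocza}, the natural step is to condition on $X^2_i, \ldots, X^d_i$ and apply the $d=1$ result for symmetric log-concave variables — that is, the Gluskin--Kwapie\'n bound for $\sum_i (a_i X^2_i \cdots X^d_i) X^1_i$, which gives $\lv \sum_i a_i X^1_i \cdots X^d_i \rv_p \sim \E_{2,\ldots,d}\, \sup\{\sum_i a_i X^2_i \cdots X^d_i x^1_i : \sum_i \m(x^1_i) \leq p\}$ up to constants. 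One then iterates: pull out the supremum in $x^1$, and inside use log-concavity of $X^2_i$ to handle the $X^2_i$ dependence, and so on. At each stage the contribution of the $k$-th factor produces a bound of the form $1 + x^k_i$ (the ``$1$'' from the $L_2$-size $\E|X^k_i| \sim^d 1$ via \eqref{eq6}, the $x^k_i$ from the supremum over $\sum_i \m(x^k_i) \leq p$), which is precisely why the norm \eqref{robocza} has the shape $x^1_i \prod_{k=2}^d (1 + x^k_i)$. I expect the clean way to organize the iteration is to prove, by induction on $d$, a statement of the form: for any fixed reals $(b_i)$, $\lv \sum_i b_i X^1_i \cdots X^m_i \rv_p \sim^m \sup\{\sum_i b_i y^1_i \prod_{k=2}^m(1+y^k_i) : \forall k\ \sum_i \m(y^k_i) \leq p\}$.

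The upper bound in each iteration step rests on the Gluskin--Kwapie\'n moment estimate for sums of independent symmetric log-concave variables (which I would cite from the appendix), applied conditionally; the measurability and the passage from conditional expectation to the full supremum require the standard observation that $\E \sup \geq \sup \E$ and, for the reverse, a chaining/union-bound argument or the known structure of the log-concave case. The lower bound is easier: one restricts the supremum to extremal configurations and uses the Paley--Zygmund-type or direct lower moment estimates for each factor, together with \eqref{eq6}. The main obstacle, I expect, is bookkeeping the interaction between the multiplicative constants $C(d)$ accumulated over the $d$ iterations and the additive error terms coming from the ``$+1$''s in \eqref{eq2}: one must check these errors are genuinely lower order, i.e.\ absorbed into $\robnorm{(a_i)}{d}$ via the $|I| \leq \lfloor p \rfloor$ trick, uniformly in $p$. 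A secondary technical point is justifying the conditional application of the one-dimensional log-concave bound when the ``coefficients'' $a_i X^2_i \cdots X^d_i$ are themselves random — here one uses that conditionally on $X^2, \ldots, X^d$ these are fixed reals and the log-concave $d=1$ theorem applies verbatim, then takes expectations and uses Fubini together with the moment-comparison \eqref{mc}-type growth (Lemma \ref{gwiazda}) to move expectations inside suprema at the cost of a constant.
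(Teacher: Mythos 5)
Your overall skeleton matches the paper's: reduce to $\robnorm{(a_i)}{d}$ via Lemma \ref{porsum}, replace $X_i$ by $X^1_i\cdots X^d_i$ via Lemma \ref{5}, and then peel off one log-concave factor at a time by conditioning and applying the Gluskin--Kwapie\'n bound. The lower bound and the treatment of the additive ``$+1$'' terms are also essentially as in the paper. However, there is a genuine gap at the single step that carries almost all of the difficulty. Conditioning on $X^2,\ldots,X^d$ and applying Gluskin--Kwapie\'n does \emph{not} give $\lv \sum_i a_i X^1_i\cdots X^d_i\rv_p \sim \E_{2,\ldots,d}\sup\{\ldots\}$ as you write; it gives the $L_p$-norm (not the $L_1$-norm) of the \emph{random} quantity $\robnorm{(a_iX^d_i)_i}{d-1}$, i.e.\ the $p$-th moment of a supremum of a linear process with random coefficients. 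Passing from $\lv \robnorm{(a_iX^d_i)_i}{d-1}\rv_p$ down to the deterministic $\robnorm{(a_i)}{d}$ is the heart of the proof, and you dispose of it with ``a chaining/union-bound argument or the known structure of the log-concave case.'' No routine union bound works here: the constraint set $\{\sum_i \m(x^k_i)\le p\}$ is not finite, and a naive net has too many points. The paper handles this in two stages: first Theorem \ref{9} (Lata{\l}a--Strzelecka) reduces the $L_p$-norm of the supremum to its $L_1$-norm plus $\sup_x \lv\cdot\rv_p$; then the $L_1$-norm $\E\,\robnorm{(a_iX^d_i)_i}{d-1}$ is bounded (Lemma \ref{7.5}) by decomposing the constraint set as $T\subset U+V$ with the carefully chosen combinatorial sets \eqref{T}--\eqref{V} taken from Lata{\l}a--{\L}ochowski, using the maximal inequality of Lemma \ref{A.6} on $U$ and the cardinality bound \eqref{mocJ} together with the small-support estimate (Lemma \ref{momkr}) on $V$, with Corollary \ref{A.7} as the induction base. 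None of this machinery is identified in your proposal, and without some substitute for it the induction step does not close.

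A secondary inaccuracy: Lemma \ref{gwiazda} compares $\norxp{(a_i)}{up}$ with $\norxp{(a_i)}{p}$; it does not ``move expectations inside suprema,'' so it cannot play the role you assign to it. The correct tool for interchanging the expectation with the supremum at controlled cost is exactly the $L_1$-versus-$L_p$ reduction of Theorem \ref{9} combined with the decomposition argument above.
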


Lata{\l{}}a \cite{Chaos1} (Theorem \ref{A.10}) derived bounds for moment of $\sum_i a_i X_i$ in a general case. However we were not able to deduce Theorem \ref{jeden} directly from it.
Instead below we present a direct tedious proof of \eqref{lepwzor}.

Since the r.v's $X_1,\ldots,X_n$ are symmetric and independent  without loss of the generality we may assume that $a_i \geq 0$.

\begin{lem}\label{6}
We have $\lv \sum_i a_i X_i \rv_p \sim^d \lv \sum_i a_i \prod_{k=1}^d X^k_i \rv_p$ for $p \geq 1$.
\end{lem}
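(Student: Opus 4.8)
The plan is to reduce both sides to Rademacher sums and then transfer the pointwise comparison \eqref{eq2} of Lemma~\ref{5} from the random variables to their $L_p$-norms, the only delicate point being the additive constants appearing in \eqref{eq2}. As already noted we may assume $a_i\geq 0$. Write $Y_i=\prod_{k=1}^d X^k_i$, which is symmetric since the $X^k_i$ are i.i.d.\ symmetric, and let $(\eps_i)$ be a Rademacher sequence independent of everything. Then, on the extended probability space,
$$\lv \sum_i a_i X_i \rv_p=\lv \sum_i a_i\eps_i|X_i| \rv_p,\qquad \lv \sum_i a_i Y_i \rv_p=\lv \sum_i a_i\eps_i|Y_i| \rv_p,$$
(these identities hold simultaneously because each concerns only the marginal law of $(|X_i|)_i$, resp.\ $(|Y_i|)_i$), while by \eqref{eq2} the bounds $|Y_i|\leq C(d)(|X_i|+1)$ and $|X_i|\leq C(d)(|Y_i|+1)$ hold a.s.

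First I would establish the auxiliary estimate
$$\lv \sum_i a_i\eps_i \rv_p\leq C(d)\min\Bigl(\lv \sum_i a_iX_i \rv_p,\ \lv \sum_i a_iY_i \rv_p\Bigr).$$
For the bound with the $X_i$, condition on $(\eps_i)$ and apply Jensen's inequality to the convex map $t\mapsto|t|^p$ in the variables $|X_i|$, getting
$$\lv \sum_i a_i\eps_i|X_i| \rv_p^p\ \geq\ \E_\eps\Bigl|\sum_i a_i\eps_i\,\E|X_i|\Bigr|^p.$$
Since $X_i\in\Sd(d)$ we have $\E|X_i|=\lv X_i\rv_1\geq 2^{-d}\lv X_i\rv_2=(2^d e)^{-1}=:c(d)>0$, so the contraction principle for Rademacher sums (with multipliers $c(d)/\E|X_i|\in(0,1]$) shows the right-hand side is at least $c(d)^p\,\E_\eps|\sum_i a_i\eps_i|^p$. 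The same argument yields the bound with the $Y_i$, using $\E|Y_i|=(\E|X^1_i|)^d\geq C(d)^{-1}$ by \eqref{eq6}.

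For the main comparison, the bound $|Y_i|\leq C(d)(|X_i|+1)$ a.s.\ together with the contraction principle (applied conditionally on $(|X_i|,|Y_i|)_i$, since $a_i\geq0$) and the triangle inequality in $L_p$ gives
$$\lv \sum_i a_i\eps_i|Y_i| \rv_p\ \leq\ \lv \sum_i a_i\eps_i\bigl(C(d)|X_i|+C(d)\bigr) \rv_p\ \leq\ C(d)\lv \sum_i a_i\eps_i|X_i| \rv_p+C(d)\lv \sum_i a_i\eps_i \rv_p,$$
and by the auxiliary estimate the last term is $\leq C(d)\lv\sum_i a_iX_i\rv_p$, so $\lv\sum_i a_iY_i\rv_p\leq C(d)\lv\sum_i a_iX_i\rv_p$. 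The reverse inequality follows symmetrically from $|X_i|\leq C(d)(|Y_i|+1)$ and the auxiliary estimate for the $Y_i$. The main obstacle, as the argument makes clear, is precisely the absorption of the additive constants from \eqref{eq2}; this is what the auxiliary estimate — i.e.\ the uniform lower bounds on $\E|X_i|$ and $\E|Y_i|$ — is designed to handle.
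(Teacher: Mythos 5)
Your proof is correct and follows essentially the same route as the paper: symmetrize both sums with an independent Rademacher sequence, use Jensen's inequality together with the uniform lower bounds on $\E|X_i|$ and $\E\prod_k|X_i^k|$ plus the contraction principle to show $\lv\sum_i a_i\eps_i\rv_p$ is dominated by both sides, and then absorb the additive constants from \eqref{eq2} via the contraction principle and the triangle inequality. The only cosmetic difference is that you state the auxiliary Rademacher estimate explicitly for both $X_i$ and $\prod_k X_i^k$, where the paper proves one direction and declares the other analogous.
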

\begin{proof}
 Let $(\eps_i)$ be a Bernoulli sequence, independent of $\left\{X_i,X^k_j\right\}_{i,j\geq 1, k\leq d}$ . Using the Jensen inequality and \eqref{eq6} we get
\begin{equation}
\lv \sum_i a_i \prod_{k=1}^d X^k_i \rv_p = \lv \sum_i a_i \eps_i \prod_{k=1}^d | X^k_i| \rv_p \geq  \lv \sum_i a_i \eps_i \E \prod_{k=1}^d |X^k_i| \rv_p \geq c(d)\lv \sum_i a_i \eps_i \rv_p. \nonumber 
\end{equation}
The contraction principle, Lemma \ref{5} and the triangle inequality yield
$$\lv \sum_i a_i X_i \rv_p=\lv \sum_i a_i \eps_i |X_i| \rv_p \leq \lv \sum_i a_i \eps_i C(d)\left( \left|\prod_{k=1}^d X^k_i\right|+1\right)\rv_p \leq C(d) \lv \sum_i a_i \prod_{k=1}^d X^k_i \rv_p .$$
The reverse bound may be established in an analogous way.
\end{proof}

Thus, to prove Theorem \ref{jeden} it is enough to properly bound $\lv \sum_i a_i \prod_{k=1}^d X^k_i \rv_p$ . The intuition behind Lemma \ref{6} is that we replaced each "big" r.v $X_i$ with a product of "smaller" pieces, which are easier to deal with.

Next lemma shows that Theorem 4.1 holds under the additional assumption that the support of the sum is small.

\begin{lem} \label{momkr}
For any $p\geq 1$ we have
$$\lv \sum_{i\leq p} a_i \prod_{k=1}^d X^k_i \rv_p \sim^d \robnorm{(a_i)_{i\leq p}}{d}. $$
\end{lem}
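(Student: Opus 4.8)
The plan is to bound the two sides against each other separately. For the lower bound, the key is that on a support of size at most $p$, each factor $X^k_i$ contributes independently, and (by the log-concave tail property \eqref{eq4} and Lemma \ref{5}) behaves like a well-understood log-concave random variable. I would first observe that since there are at most $\lfloor p\rfloor$ summands, we may invoke a conditional version of the Gluskin--Kwapie\'n / Lata\l{}a bound for sums of independent log-concave variables (the $d=1$ theory) successively in each coordinate $k=1,\dots,d$, peeling off one product factor at a time. Concretely, condition on all $X^k_i$ except those with $k=d$: the resulting sum $\sum_{i\le p} (a_i \prod_{k<d} X^k_i) X^d_i$ is a linear combination of i.i.d.\ log-concave variables $X^d_i$, and its $L_p$ norm is comparable to $\sqrt{p}\,\lv(a_i\prod_{k<d}X^k_i)\rv_2 + p\cdot(\text{an }M\text{-weighted sup})$, which after using \eqref{ulubionaetykieta} and \eqref{jed} one rewrites in terms of $\hat M$. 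Iterating downward through $k=d-1,\dots,1$ and matching the supremum appearing at each stage with the defining supremum in \robnorm{\cdot}{d} (taking the optimizing vectors $x^k_i$ to be the relevant pieces) yields $\lv\sum_{i\le p}a_i\prod_k X^k_i\rv_p \gtrsim^d \robnorm{(a_i)_{i\le p}}{d}$. The small-support hypothesis $i\le p$ is exactly what makes the $1+x^k_i$ factors in \eqref{robocza} legitimate: on a set of size $\le p$ one can afford to ``turn on'' each $x^k_i$ up to the budget $\sum_i\hat M^X_i(x^k_i)\le p$, and the constant-$1$ part of $(1+x^k_i)$ corresponds to $X^k_i$ being of order its mean $\E|X^k_i|\sim^d 1$.

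For the upper bound I would run the same peeling in reverse. Write $\prod_{k=1}^d X^k_i = X^1_i\prod_{k=2}^d X^k_i$ and note that, since the $X^k_i$ are symmetric, we may insert independent signs; then the contraction principle lets us replace $\prod_{k=2}^d|X^k_i|$ by $\prod_{k=2}^d(1+|X^k_i|)$ up to expanding into $2^{d-1}$ terms (this is precisely the trick already used in the proof of Lemma \ref{porsum}, around \eqref{l.2}). Applying the $d=1$ upper bound conditionally in the $X^1$-variables gives $\lv\cdot\rv_p \lesssim \sqrt p\,\lv(a_i\prod_{k\ge2}(1+|X^k_i|))\rv_2 + p\,\sup_{|I|\le p}\sum_{i\in I}|a_i|\prod_{k\ge2}(1+|X^k_i|)$ (the sup term because the weights' $\hat M$-budget on a set of size $\le p$ is controlled), and one continues conditioning coordinate by coordinate. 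At the end every surviving expectation of the form $\E\bigl[\text{something}\cdot\mathbf 1_{\sum\hat M^X_i(x^k_i)\le p}\bigr]$ is dominated by the corresponding supremum in \robnorm{\cdot}{d}; the $\sqrt p\,\ell^2$ terms are absorbed because for $|x|\le 1$ we have $\hat M^X_i(x)=x^2$, so an $\ell^2$-ball of radius $\sqrt p$ is contained in $\{\sum\hat M^X_i(x_i)\le p\}$.

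The main obstacle I anticipate is bookkeeping the $2^d$ (really $2^{d-1}$, since $X^1$ keeps its sign) cross terms produced by the $\prod(1+x^k_i)$ expansion and checking that each one is individually controlled by $\robnorm{(a_i)_{i\le p}}{d}$ — in particular that the ``mixed'' terms, where some coordinates are at their mean (the $1$) and others are large, do not exceed the norm. This is handled by the same device as in \eqref{l.4}: on any index set $I\subset[d]$ that is ``large'', bound $\prod_{k\in I}x^k_i \le \frac1{|I|}\sum_{k\in I}(x^k_i)^{|I|}\le (x^1_i)^d$ after symmetrizing, use \eqref{eq5} to convert $(x^1_i)^d$ back into a single $N^X_i$-constraint, and relabel. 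A secondary technical point is that the conditional $d=1$ estimate (Theorem \ref{A.10} / the Gluskin--Kwapie\'n bound) must be applied with the correct normalization so that the log-concave weight function matches $\hat M^X_i$ on $[1,\infty)$ and equals the square on $[-1,1]$; the normalizations \eqref{eq6} and \eqref{jed} built into Lemma \ref{5} are exactly what makes this matching clean, so no further adjustment of constants (beyond those depending on $d$) is needed.
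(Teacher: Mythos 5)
Your overall strategy --- induction/peeling one coordinate at a time, applying the one-dimensional Gluskin--Kwapie\'n bound conditionally --- is the same as the paper's, and your lower-bound sketch is essentially the paper's argument (Jensen to produce the ``$1$'' in $1+x^k_i$ via $\E|X^k_i|\sim^d 1$, plus $\lv\sup_x(\cdot)\rv_p\ge\sup_x\lv\cdot\rv_p$ to produce the $x$'s). One misattribution there: the hypothesis $i\le p$ plays no role in the lower bound (the paper even records this as Remark \ref{zdolu}); it is needed only for the upper bound.

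The genuine gap is in the upper bound, at the step you describe as ``every surviving expectation \ldots is dominated by the corresponding supremum in $\robnorm{\cdot}{d}$.'' After conditioning, what you must control is the $L_p$-norm (in the remaining variables) of the \emph{random} norm $\robnorm{(a_iX^d_i)_{i\le p}}{d-1}$, and the domination of this by the deterministic quantity $\robnorm{(a_i)_{i\le p}}{d}$ is exactly the hard part of the lemma --- your proposal asserts it but supplies no mechanism. The paper's mechanism is a concentration estimate that crucially uses $|{\rm supp}|\le p$: since $\Pro\left(M^X_i(|X^d_i|)\ge t\right)\le e^{-t}$, each $M^X_i(|X^d_i|)$ is stochastically dominated by a standard exponential, so $\sum_{i\le p}M^X_i(|X^d_i|)$ has Gamma-type moments, $\lv\sum_{i\le p}M^X_i(|X^d_i|)\rv_q\le Cq$ for $q\ge p$; by convexity of $M^X_i$ and Chebyshev this yields
$\Pro\left(\sum_{i\le p}M^X_i\left(|X^d_i|/(Ct)\right)\ge p\right)\le e^{-tp}$,
i.e.\ the rescaled weight vector $(X^d_i/(Ct))_{i\le p}$ lies in the constraint body $\left\{\sum_i \m(x_i)\le p\right\}$ except with probability $e^{-tp}$, whence $\Pro\left(\robnorm{(a_iX^d_i)_{i\le p}}{d-1}\ge Ct\,\robnorm{(a_i)_{i\le p}}{d}\right)\le e^{-tp}$ and integration by parts closes the induction. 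Without this (or an equivalent) the upper bound does not go through; note also that your intermediate formula $\sqrt p\,\ell^2+p\cdot\sup$ is not what the Gluskin--Kwapie\'n bound gives for general log-concave tails --- the correct output is the $\hat M$-constrained supremum $\robnorm{\cdot}{1}$, which is why the argument must be phrased in terms of membership of the random weights in the constraint body rather than in terms of a two-term closed formula.
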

\begin{proof}
We will proceed by an induction on $d$. For $d=1$ lemma holds by the  Gluskin-Kwapie? bound \ref{gl}. Assume $d\geq 2$ and the assertion holds for $1,2,\ldots,d-1$. First we establish the lower bound for $\lv \sum_{i\leq p} a_i \prod_{k=1}^d X^k_i \rv_p$. We have
\begin{align*}
&C(d) \lv \sum_{i\leq p} a_i \prod_{k=1}^d X^k_i \rv_p  \geq \lv \sum_{i\leq p} a_i \prod_{k=1}^\dd X^k_i \E |X^d_i |\rv_p+ \lv \robnorm{(a_i X^d_i)_{i\leq p}}{d-1} \rv_p \\
&\geq \frac{1}{C(d)} \lv \sum_{i\leq p} a_i \prod_{k=1}^\dd X^k_i\rv_p
+ \sup \left\{ \lv \sum_{i \leq p } a_i x^1_i \prod_{k=2}^{d-1} (1+x^k_i) X_i   \rv_p \ \Big{|} \ \forall_{k\leq d-1} \sum_{i\leq p} \m(x^k_i)\leq p \right\}\\
&\geq \frac{1}{C(d)} \Bigg{(} \sup \left\{\sum_{i} a_i  x^1_i \prod_{k=2}^{d-1} (1+x^k_i) \ \Big{|} \ \forall_{k=1,\ldots,d-1} \sum_{i} \m(x^k_i)\leq p \right\}\\
 &\ \ \ +\sup \left\{\sum_{i} a_i  x^1_i \prod_{k=2}^{d-1}(1+x^k_i)x^d_i \ \Big{|} \ \forall_{k=1,\ldots,d} \sum_{i} \m(x^k_i)\leq p \right\} \Bigg{)} \\
&\geq \frac{1}{C(d)} \robnorm{(a_i)}{d},
\end{align*}
where the first inequality follows by Jensen's inequality and the induction assumption, the second by \eqref{eq6} and the third by  the induction assumption.

 Now we prove the upper bound. Using the right-continuity of $M^X_i$ we have
$$\Pro(M^X_i(|X^d_i|) \geq t)=\Pro\left(|X^d_i|\geq \left(M^X_i\right)^{-1}(t)\right)\leq e^{-t}\ \textrm{ for } t>0.$$
Therefore there exists nonnegative i.i.d r.v's $\wy_1,\ldots,\wy_p$ with the  density $ e^{-t}\1_{\{t>0\}}$ such that $M^X_i(|X^d_i|)\leq \wy_i$. Since $\sum_i^p \wy_i$ has the $\Gamma(p,1)$ distribution we obtain
\begin{align*}
\lv \sum_{i\leq p} M^X_i(|X^d_i|) \rv_q &\leq \lv \sum_{i \leq p} \wy_i \rv_q\leq Cq \ \textrm{ for } q\geq p.
\end{align*}
Since $M^X_i$ is convex, the above inequality implies for any $t\geq 1$,
\begin{align}
\Pro\left(\sum_{i\leq p } M^X_i \left(\frac{|X^d_i|}{Ct}\right) \geq p \right)  &\leq \Pro\left(\sum_{i\leq p } M^X_i \left(|X^d_i|\right) \geq Ctp \right) \leq (Ctp)^{-tp}\lv \sum_{i\leq p} M^X_i(|X^d_i|) \rv^{tp}_{tp} \nonumber \\
&\leq e^{-tp}. \label{prawd}
\end{align}

From \eqref{prawd}

\begin{align}
&\Pro\left( \robnorm{\left(a_i X^d_i\right)_{i \leq p}}{d-1} \geq Ct \robnorm{\left(a_i\right)_{i \leq p}}{d} \right) \nonumber \\
&=\Pro\left( \robnorm{\left(a_i \frac{X^d_i}{Ct}\right)_{i \leq p}}{d-1} \geq  \sup\left\{ \robnorm{\left(a_i(1+ x_i)\right)_{i \leq p}}{d-1}\ \Big{|} \ \sum_i \nx(x_i) \leq p  \right\}\right) \nonumber \\
&\leq \Pro\left(\sum_{i\leq p} \m \left(\frac{|X^d_i|}{Ct}\right) \1_{\{\frac{|X^d_i|}{Ct} > 1\}} \geq p \right) \leq
\Pro\left( \sum_{i\leq p} M^X_i \left(\frac{|X^d_i|}{Ct}\right)  \geq p \right) \leq e^{-tp}. \nonumber
\end{align}
Integration by parts  gives
\begin{align}
&\lv \robnorm{\left(a_i X^d_i\right)_{i \leq p}}{d-1}\rv_p \leq C\robnorm{\left(a_i\right)_{i \leq p}}{d}. \label{mo}
\end{align}
By the induction assumption and \eqref{mo},
\begin{align*}
\lv \sum_{i \leq p} a_i \prod_{k=1}^d X^k_i 
\rv_p \leq C(d) \lv \robnorm{\left(a_i X^d_i\right)_{i \leq p}}{d-1} \rv_p \leq C(d) \robnorm{\left(a_i\right)_{i \leq p}}{d},
\end{align*}
that concludes the proof of the induction step.

\end{proof}

\begin{rem}\label{zdolu}
Observe that in the proof of the lower bound in Lemma \ref{momkr} we  have not used the condition $i\leq p$.
\end{rem}

The idea of the following lemma is taken from \cite{Latloch}.

\begin{lem}\label{pozbiorach}
Let $p\geq 1$. Define
\begin{align}
T&=\left\{v \in \R^n \ \Big{|} \ \sum_i \mx(v_i) \leq p \right\}\cap \left\{ v \in \R^n \ | \ \forall_{i\leq n} \ |v_i|\geq 1 \textrm{ or } v_i=0 \right\}, \label{T} \\
U&=\bigcap_{l=1}^\infty \left\{v \in \R^n \  |  \  \mx(v_i)\leq l^3,\ i\in (2^{l}p,2^{l+1}p] \right\}\cap \left\{v \in \R^n \ |  \ \forall_{i \leq 2p} \ v_i=0  \right\} \cap T \label{U}, \\
V&=\left\{v \in \R^n \  |  \ \forall_{i\leq 2p} \ \mx(v_i)\geq 1 \textrm{ or } v_i=0 \right \} \nonumber \\
&\cap \bigcap_{l=1}^\infty \left\{v \in \R^n \  |  \  \mx(v_i)> l^3 \textrm{ or } v_i=0,\ i\in (2^{l}p,2^{l+1}p] \right\}  \cap T. \label{V}
\end{align}
If $(a_i)$ is a nonincreasing nonnegative sequence then
\begin{align}
&\E \sup_{x^1,\ldots,x^{d-1} \in U} \sum_i a_i \prod_{k=1}^{d-1} x^k_i |X^d_i| \leq C(d) \robnorm{(a_i)}{d}, \label{latU} \\
&\E \sup_{x^1,\ldots,x^{d-2} \in T,\ x^{d-1} \in V} \sum_i a_i \prod_{k=1}^{d-1} x^k_i |X^d_i| \leq C(d) \robnorm{(a_i)}{d}. \label{latV} 
\end{align}

\end{lem}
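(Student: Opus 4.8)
The plan is to estimate the two suprema by a chaining / union-bound argument over the random variables $|X^d_i|$, exactly in the spirit of \cite{Latloch}. The basic observation is that both $U$ and $V$ are defined by constraints that control, for each dyadic block $(2^l p, 2^{l+1}p]$, the size of the coordinates $x^k_i$: on $U$ the coordinates are forced to be \emph{small} (of order at most $l^{3/d}$ up to the normalization, since $\mx(v_i)\le l^3$), while on $V$ the surviving coordinates are forced to be \emph{large} ($\mx(v_i)>l^3$, so by \eqref{ulubionaetykieta} $|v_i|\ge l^3$), and the total $\sum_i \mx(v_i)\le p$ budget then forces very few of them to be nonzero in each block. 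In either case, inside a fixed block the deterministic supremum $\sup \sum_i a_i\prod_k x^k_i$ over the relevant constraint set is, by the reasoning already used in Lemma \ref{momkr} and Lemma \ref{porsum}, bounded by a quantity of the form $(\text{poly in }l)\cdot \robnorm{(a_i)_{i\in\text{block}}}{d}$, using monotonicity of the $\mx_i$ and the AM--GM trick $\prod x^k_i\le \frac1{|I|}\sum (x^k_i)^{|I|}$.

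First I would reduce, in both \eqref{latU} and \eqref{latV}, the supremum over $x^1,\dots,x^{d-1}$ to a supremum over a single vector: on $U$ replace each $x^k_i$ by a common representative and absorb the remaining factors into a constant $C(d)$ and a loss polynomial in the block index, exactly as in the passage from \eqref{robocza} to the one-parameter bounds in Lemma \ref{porsum}; similarly on the block $(2^l p,2^{l+1}p]$ in $V$. Next I would split the sum $\sum_i a_i\prod_k x^k_i |X^d_i|$ over the dyadic blocks $I_l=(2^l p,2^{l+1}p]$ (with $I_0=(0,2p]$ handled separately, noting that on $U$ the first block is zero and on $V$ it carries the $\mx(v_i)\ge 1$ constraint). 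For each block, I would take expectation over $(|X^d_i|)_{i\in I_l}$ and use the subgamma tail bound \eqref{prawd}: the event that $\sum_{i\in I_l}\mx(|X^d_i|/(Ct))\1_{\{\cdot>1\}}$ exceeds the block's $\mx$-budget has probability at most $e^{-tp}$ (or $e^{-t|I_l|}$ using $|I_l|\sim 2^l p$), so that with overwhelming probability the random vector $(|X^d_i|/(Ct))_{i\in I_l}$ itself lies in the feasible set defining the norm $\robnorm{\cdot}{d}$ restricted to that block. Integrating the resulting tail bound (integration by parts, as in the derivation of \eqref{mo}) yields that the block's contribution to the expected supremum is at most $C(d)(1+l)^{O(1)}\,\robnorm{(a_i)_{i\in I_l}}{d}$.

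Finally I would sum over $l$. The key point making the series converge is that the extra polynomial factor $(1+l)^{O(1)}$ picked up in block $I_l$ is beaten by a geometric gain: either the $l^3$ budget on $U$ forces the per-block norm to decay (more precisely, on $I_l$ one only sees $p$ units of $\mx$-budget spread over $2^l p$ coordinates, which after the substitution $y_i=x_i/(\text{scale})$ in the style of \eqref{l.1}--\eqref{l.4} shrinks the contribution by a factor like $2^{-l}$ or a power thereof), or on $V$ the constraint $\mx(v_i)>l^3$ together with $\sum\mx(v_i)\le p$ caps the number of nonzero coordinates in $I_l$ by $p/l^3$, again producing summable-in-$l$ contributions after using that $(a_i)$ is nonincreasing and comparing $\sum_{i\in I_l}a_i$ to the full norm via the test vectors used at the end of Lemma \ref{porsum}. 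Collecting the blocks gives $\sum_l C(d)(1+l)^{O(1)} 2^{-cl}\robnorm{(a_i)}{d}\le C(d)\robnorm{(a_i)}{d}$, which is \eqref{latU} and \eqref{latV}.

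The main obstacle I expect is the bookkeeping in the last step: making precise how, on each dyadic block, the constraint defining $U$ (resp. $V$) together with the global constraint $\sum_i\mx(v_i)\le p$ simultaneously (i) lets the random vector $(|X^d_i|/(Ct))$ be swallowed by the norm's feasible set with probability $1-e^{-tp}$, and (ii) produces a genuinely geometric, not merely bounded, decay in $l$ that defeats the polynomial loss from the union bound over the $l$-indexed events in the definitions \eqref{U}, \eqref{V}. This requires carefully choosing the per-block scaling $Ct$ versus $l^3$ and using \eqref{ulubionaetykieta1}--\eqref{ulubionaetykieta} to trade $\mx$-budget for coordinate size; the rest is the same machinery (Jensen, AM--GM, monotonicity of $\mx_i$, integration by parts) already deployed in Lemmas \ref{porsum} and \ref{momkr}.
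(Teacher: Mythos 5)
Your overall skeleton (dyadic blocks $(2^lp,2^{l+1}p]$, reduction from $d-1$ vectors to one via the $l^3$ cap, monotonicity of $(a_i)$, and the observation that $V$ forces at most $p/l^3$ nonzero coordinates per block) matches the paper, but the central per-block estimate you propose for \eqref{latU} does not work, and you have flagged rather than resolved exactly the point where it fails. You want to show that $(|X^d_i|/(Ct))_{i\in I_l}$ lands in the budget-$p$ feasible set with probability $1-e^{-tp}$ via \eqref{prawd}; but \eqref{prawd} concerns $p$ variables, while the block $I_l$ contains $2^lp$ of them, and $\E\sum_{i\in I_l}\m(|X^d_i|/(Ct))\1_{\{\cdot>1\}}$ is of order $2^lp\,e^{-ct}$, which exceeds $p$ unless $t\gtrsim l$ --- so the random vector is typically \emph{not} absorbed by the feasible set, and the loss you incur is not quantified. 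Moreover, even granting a per-block bound of the form $C(d)(1+l)^{O(1)}\robnorm{(a_i)_{i\in I_l}}{d}$, this does not sum: already for $a_i=i^{-1/2}$ and $d=1$ one has $\sum_l\robnorm{(a_i)_{i\in I_l}}{1}\gg\robnorm{(a_i)}{1}$, so no amount of bookkeeping turns these block norms into the right-hand side of \eqref{latU}. The paper's mechanism is different: after discretizing the block constraint to $x_i\in\{0,l^3\}$ with $\sum_ix_i\leq l^3\lceil p/l^3\rceil$, it applies the deviation inequality of Lemma \ref{A.6}, whose bound is proportional to the \emph{largest coefficient in the block}, giving block contribution $\leq C(p+l^4)l^{3(d-1)}a_{2^lp+1}$; the geometric decay then comes purely from monotonicity, $a_{2^lp+1}\leq(2^{l-1}p)^{-1/2}(\sum_{i>p}a_i^2)^{1/2}$, and the total is $C(d)\sqrt{p}(\sum_{i>p}a_i^2)^{1/2}\leq C(d)\robnorm{(a_i)}{d}$ by Lemma \ref{rad}. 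This largest-coefficient bound, not the block norm, is the missing idea.

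For \eqref{latV} your plan also diverges in a way that creates a gap: a per-block union bound over the $\binom{2^lp}{p/l^3}\leq C^{p+l}$ admissible supports in $I_l$, followed by a sum over $l$, accumulates factors $C^{(p+l)/p}=C\cdot C^{l/p}$ that grow geometrically in $l$ for fixed $p$, and the per-block contributions for $V$ have no compensating decay. The paper avoids this by working globally: any $x^{d-1}\in V$ has support in the single family $\J$ of \eqref{trojkat}, whose cardinality is at most $C^p$ by \eqref{mocJ} (the computation $\sum_l(p/l^3)(l+\ln l)\leq Cp$ is what makes the exponent $p$ rather than $p+\sum_l l$), and then reduces each fixed support $I\in\J$, which has $|I|\leq p$, to the already-proved small-support estimate of Lemma \ref{momkr} via \eqref{poi}, finishing with a single $L^p$-union bound over $\J$. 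You would need to reorganize your argument around this global support family to close the proof.
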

As we will see (in the next lemma) the main difficulty in proving Theorem \ref{jeden} is the proper estimation of 
$$\E \sup_{x^1,\ldots,x^{d-1} \in T} \sum_i a_i \prod_{k=1}^{d-1} x^k_i |X^d_i|. $$
 The key properties of sets $U,V$ are that $U,V \subset T \subset U+V$ and that we can prove \eqref{latU}, \eqref{latV} by some combinatorial arguments. The main difficulty in Lemma \ref{pozbiorach} is to figure how to decompose set $T$ (which was done in \cite{Latloch}).

\begin{proof}
We begin with \eqref{latU}. Using the fact that the sequence $(a_i)$ is nonnegative and \eqref{eq6} we obtain
\begin{align}
&\E \sup_{\stackrel{x^k \in U}{\ k=1,\ldots,d-1}} \sum_{i}  a_i \prod_{k=1}^{d-1}  x^k_i \lm X^d_i \rmo \leq \sum_{l=1}^\infty \E  \sup_{\stackrel{x^k \in U}{\ k=1,\ldots,d-1}}\sum_{i=2^{l}p+1}^{2^{l+1}p}  a_i \prod_{k=1}^{d-1} x^k_i \lm X^d_i \rmo \nonumber \\
&\leq C(d)  \sum_{l=1}^\infty l^{3(d-2)} \E \sup \left\{ \sum_{i=2^{l}p+1}^{2^{l+1}p}  a_i x_i \lm X^d_i \rmo \ \Big{|} \ \sum_i x_i \leq p,\ 1\leq x_i \leq l^3 \textrm{ or } x_i=0 \right\} \nonumber \\
&\leq C(d) \sum_{l=1}^\infty l^{3(d-2)} \E \sup \left\{ \sum_{i=2^{l}p+1}^{2^{l+1}p}  a_i x_i \lm X^d_i \rmo \ \Big{|} \ \sum_i x_i \leq l^3 \left\lceil \frac{p}{l^3} \right\rceil,\  x_i \in \{0, l^3\} \right\}. \label{kawal} 
\end{align}
Using Lemma \ref{A.6}
\begin{align}
&\E \sup \left\{ \sum_{i=2^{l}p+1}^{2^{l+1}p}  a_i x_i \lm X^d_i \rmo \ \Big{|} \ \sum_i x_i \leq l^3 \left\lceil \frac{p}{l^3} \right\rceil,\  x_i \in \{0, l^3\} \right\} \nonumber \\
&\leq \E \sup \left\{ \left( \sum_{i=2^{l}p+1}^{2^{l+1}p}  a_i x_i  \lm X^d_i\rmo -C\sum_{i=2^{l}p+1}^{2^{l+1}p}   a_i x_i   \right)_+ \ \Big{|} \ \sum_i x_i \leq l^3 \left\lceil \frac{p}{l^3} \right\rceil,\  x_i \in \{0, l^3\} \right\}+ Cl^3\left\lceil \frac{p}{l^3} \right\rceil  a_{2^{l}p+1} \nonumber \\ 
&\leq  C\left(p+\ln \left|\left\{v \in \R^{2^lp} \ \Big{|} \ \sum_i v_i \leq l^3 \left\lceil \frac{p}{l^3} \right\rceil,\  v_i \in \{0, l^3\} \right\} \right|\right)l^3a_{2^{l}p+1}+ C(l^3+p)a_{2^lp+1} \nonumber \\
&\leq C(p+ l^4)  a_{2^{l}p+1}. \label{oszkwa}
\end{align}
The last inequality follows by the simple estimate
$$\binom{2^lp}{\left\lceil\frac{p}{l^3}\right\rceil}\leq \left(\frac{2^l p e}{\left\lceil\frac{p}{l^3}\right\rceil} \right)^{\left\lceil\frac{p}{l^3}\right\rceil}\leq C^{p+l}.$$
Combining \eqref{kawal} and \eqref{oszkwa} we obtain 
\begin{align}
&\E \sup_{x^k \in U,\ k=1,\ldots,d-1} \sum_{i}  a_i \prod_{k=1}^{d-1} x^k_i \lm X^d_i \rmo \leq C(d)  \sum_{l=1}^\infty l^{3d} p a_{2^{l}p+1} \leq C(d)\sum_{l=1}^\infty l^{3d} p \frac{\sqrt{\sum_{i>p} a_i^2}}{\sqrt{2^{l-1}p}} 
\leq C(d)  \sqrt{p} \sqrt{\sum_{i>p} a^2_i}. \label{oszacowanieU} 
\end{align}
To finish the proof of \eqref{latU} it is enough to observe that by Lemma \ref{rad}
\begin{align*}
\frac{\sqrt{p}}{2} \sqrt{\sum_{i>p} a^2_i}&\leq  \sup \left\{\sum_i a_i t_i \ \Big{|} \ \sum_i t^2_i\leq p,\ \forall_i |t_i| \leq 1 \right\}\leq  \robnorm{(a_i)}{d}. 
\end{align*}

Now we show \eqref{latV}. Let 
\begin{equation}
\J=\left\{ I \subset \N \  |  \ |I|\leq p,\ \forall_{l \in \N}  \ |I \cap [2^{l}p+1,2^{l+1}p)| \leq \frac{p}{l^3} \right\} \label{trojkat}
\end{equation}
 (in particular $\J$ contains any subset of $[2p]$ of cardinality not greater than $p$). Applying the inequality $ \binom{n}{m} \leq \left( \frac{en}{m} \right)^m$, we get an estimate of the cardinality of $\J$
\begin{equation}
|\J|\leq 2^{2p} \prod_{l=1}^{\left\lfloor p^{\frac{1}{3}}  \right\rfloor} \left(\frac{C2^{l}p}{\left\lfloor p/l^3  \right\rfloor} \right)^{\left\lfloor p/l^3  \right\rfloor} \leq C^p \prod_{l=1}^\infty (2^{l} l^3)^{ p/l^3}\leq C^p. \label{mocJ}
\end{equation}

Take any $I\in \J$. We obtain (see Lemma \ref{momkr})
\begin{align}
&\E \robnorm{(a_i X^d_i)_{i 
\in I}}{d-1} \leq C(d) \robnorm{(a_i)_{i 
\in I}}{d}. \label{poi}
\end{align}
 Using the definition of $V$, \eqref{mocJ} and \eqref{poi}
\begin{align*}
&\E \sup_{} \left\{\sum_i  a_i    \prod_{k=1}^{d-1} x^k_i X^d_i 
\ \Big{|} \ \forall_{k\leq d-2} \sum_i \mx(x^k_i)\leq p,\ x^{d-1} \in V \right\}  \\
&\leq \left( \sum_{I\in \J} \left(\E \robnorm{(a_i X^d_i)_{i 
\in I}}{d-1}\right)^p \right)^{1/p}  \leq C(d) \sup_{I \in \J} \robnorm{(a_i)_{i 
\in I}}{d}\leq C(d) \robnorm{(a_i)}{d}. 
\end{align*}

\end{proof}

\begin{lem} \label{7.5}
For any $d \in \N$ the following holds
\begin{align}
 \E \robnorm{(a_i X^d_i)}{d-1} \leq C(d)  \robnorm{(a_i)}{d}. \label{5.3.1}
\end{align}
\end{lem}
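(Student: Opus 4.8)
\textbf{Proof plan for Lemma \ref{7.5}.}

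The plan is to reduce the statement to the inequalities already established in Lemma \ref{pozbiorach}, using the decomposition $T\subset U+V$ together with a symmetrization/rearrangement step. First I would observe that by homogeneity and the symmetry and independence of the $X^k_i$'s we may assume that $(a_i)$ is a nonnegative \emph{nonincreasing} sequence: indeed, $\robnorm{(a_i X^d_i)}{d-1}$ and $\robnorm{(a_i)}{d}$ depend on $(a_i)$ only through $(|a_i|)$, and one can pass to the nonincreasing rearrangement since the constraint sets defining the norms are invariant under permutations of the index set and under sign changes of the $x^k_i$. Next, unwinding the definition \eqref{robocza}, the left-hand side of \eqref{5.3.1} is
\begin{align*}
\E \sup\left\{ \sum_i a_i x^1_i \prod_{k=2}^{d-1}(1+x^k_i)\, |X^d_i| \ \Big{|}\ \forall_{k\leq d-1}\ \sum_i \mx(x^k_i)\leq p \right\},
\end{align*}
and expanding the product $\prod_{k=2}^{d-1}(1+x^k_i)$ over subsets $I\subset\{2,\ldots,d-1\}$ reduces matters, up to the constant $2^{d-2}$, to bounding
\begin{align*}
\E \sup\left\{ \sum_i a_i \prod_{k=1}^{m} y^k_i\, |X^d_i| \ \Big{|}\ \forall_{k\leq m}\ \sum_i \mx(y^k_i)\leq p \right\}
\end{align*}
for each $1\le m\le d-1$ (the term $y^1_i$ always present, the remaining $m-1$ factors coming from the chosen subset $I$), which is exactly a supremum over $m$ copies of the set $T$ of \eqref{T} after splitting each $y^k_i$ into its part of modulus $\ge 1$ and its part of modulus $<1$; the $<1$ parts contribute a factor controlled crudely by $|y^k_i|\le 1$ and absorbed into $\robnorm{(a_i)}{d}$ via Lemma \ref{rad} as in the end of the proof of \eqref{latU}.

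Then I would invoke the containment $T\subset U+V$ (stated just after Lemma \ref{pozbiorach}): every $x\in T$ can be written $x=u+v$ with $u\in U$, $v\in V$. Plugging this into each of the $m$ slots and expanding, the supremum over $x^1,\ldots,x^m\in T$ is bounded by a sum of $2^m\le 2^{d-1}$ suprema, in each of which every slot is restricted either to $U$ or to $V$. A supremum with \emph{all} slots in $U$ is handled by \eqref{latU}; a supremum in which \emph{at least one} slot lies in $V$ is handled by \eqref{latV} after bounding the other slots: the slots in $U$ (hence in $T$) and in $V$ (hence in $T$) are all in $T$, so any such term is dominated by $\E\sup_{x^1,\ldots,x^{d-2}\in T,\ x^{d-1}\in V}\sum_i a_i\prod_{k=1}^{d-1} x^k_i|X^d_i|$ (putting the $V$-slot last and enlarging the rest from their actual constraint set to $T$), which is $\le C(d)\robnorm{(a_i)}{d}$ by \eqref{latV}. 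Summing the $O_d(1)$ terms gives \eqref{5.3.1}.

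The main obstacle I anticipate is bookkeeping rather than a genuine analytic difficulty: one must be careful that after the subset-expansion of $\prod(1+x^k_i)$ the number of "active" factors $m$ varies, so the $U$/$V$ decomposition has to be applied with the correct number of slots, and the inner supremum in \eqref{latU}, \eqref{latV} is written for $d-1$ factors $x^1,\ldots,x^{d-1}$ — so for $m<d-1$ one should simply set the extra factors equal to a fixed vector in $T$ (e.g. all-zero, which lies in $U\cap V\cap T$) to match the form of Lemma \ref{pozbiorach}, or equivalently re-prove the trivial monotonicity that reducing the number of factors only decreases the supremum. A second minor point is that in \eqref{latU}, \eqref{latV} the sequence $(a_i)$ is assumed nonincreasing and nonnegative, which is why the reduction to that case at the very start is essential; one then notes that the splitting into $|x^k_i|\ge 1$ and $<1$ parts preserves nonnegativity of $a_i$ but is compatible with the norm $\robnorm{\cdot}{d}$ since $\mx(t)=t^2$ for $|t|<1$ matches the constraint $\sum t_i^2\le p,\ |t_i|\le 1$ appearing in Lemma \ref{rad}.
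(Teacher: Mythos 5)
Your plan correctly identifies the mechanism for the part of the supremum in which \emph{all} active factors have modulus at least $1$: there the decomposition $T\subset U+V$, the expansion over which slots land in $U$ or $V$, and the bounds \eqref{latU}, \eqref{latV} (with the duplication of slots to handle $m<d-1$ active factors) is exactly what the paper does for the term \eqref{zmiana}. The gap is in how you dispose of the contributions coming from small coordinates. The sets $T,U,V$ of Lemma \ref{pozbiorach} contain only vectors whose nonzero coordinates have modulus at least $1$, so after splitting each slot into its large and small parts you are still left with, among others, the term
\[
\E\sup\Big\{\textstyle\sum_i a_i\, x^1_i \1_{\{|x^1_i|\le 1\}}\, X^d_i \ \Big|\ \sum_i \mx(x^1_i)\le p\Big\},
\]
and your proposal to control it "crudely by $|y^k_i|\le 1$ and absorb it via Lemma \ref{rad}" does not work. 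Lemma \ref{rad} is a deterministic rearrangement identity; applied to the random sequence $(a_i|X^d_i|)$ it only converts the problem into bounding the expected sum of the $p$ largest values of $a_i|X^d_i|$ plus an $\ell^2$ tail, i.e.\ into an order-statistics estimate which is precisely the nontrivial content of Corollary \ref{A.7}. The cruder route via Cauchy--Schwarz gives $\sqrt{p}\,\E\big(\sum_i a_i^2 (X^d_i)^2\big)^{1/2}\le C\sqrt{p}\,\big(\sum_i a_i^2\big)^{1/2}$, and this is genuinely too lossy: already for $a=e_1$ and bounded $X_i$ (e.g.\ Rademacher-type, for which $\mx^{-1}(p)\le C(d)$) one has $\sqrt{p}\,\lv a\rv_2=\sqrt{p}$ while $\robnorm{(a_i)}{d}\le C(d)$, so $\sqrt{p}\,\lv a\rv_2$ is not dominated by $C(d)\robnorm{(a_i)}{d}$; note that \eqref{latU} only ever produces $\sqrt{p}\big(\sum_{i>p}a_i^2\big)^{1/2}$, with the crucial tail restriction $i>p$ enforced by the definition of $U$. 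The paper closes exactly this hole by an induction on $d$: the small-first-factor term $S_1$ in \eqref{rozbicie}, and all subset terms with fewer active factors, are absorbed by the induction hypothesis $\E\robnorm{(a_iX^d_i)}{d-2}\le C(d)\robnorm{(a_i)}{d-1}$, and the base case $d=2$ is Corollary \ref{A.7} applied to the diagonal array $\hat a_{i,j}=a_i\1_{\{i=j\}}$. Your argument uses neither the induction nor Corollary \ref{A.7}, so it cannot be completed as written.

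A secondary, fixable error: padding the missing slots with the all-zero vector (which you note lies in $U\cap V$) annihilates the product $\prod_k x^k_i$ and so does not reduce an $m$-factor supremum to the $(d-1)$-factor form of \eqref{latU}, \eqref{latV}. The correct device, used in the paper in the proof of Proposition \ref{naprawa}, is to duplicate an existing slot, $x^k:=x^{k_0}$ for $k\notin I$; this only increases the supremum because all active coordinates of vectors in $T$ have modulus at least $1$.
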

\begin{proof}
Without loss of the generality we may assume that the sequence $(a_i)$ is nonincreasing and recall that in this section $a_i$ are nonnegative.
We proceed by an induction on $d$. If $d=2$ then Corollary \ref{A.7} (with $\hat{a}_{i,j}=a_i \1_{\{i=j\}}$) implies the assertion. 
Assume that \eqref{5.3.1} holds for any $2,3,\ldots,d-1$. Obviously,
\begin{align}
 \E \robnorm{(a_i X^d_i)}{d-1}&\leq \E \sup \left\{\sum_i a_i  x^1_i \1_{\{ |x^1_i| \leq 1 \}} \prod_{k=2}^{d-1} (1+x^k_i)  X^d_i  \ \Big{|} \ \forall_{k\leq d-1} \sum_i \mx(x^k_i)\leq p \right\} \nonumber \\
 &\ +\E \sup \left\{\sum_i a_i  x^1_i \1_{\{|x^1_i| > 1\}} \prod_{k=2}^{d-1}(1+ x^k_i) X^d_i \ \Big{|} \ \forall_{k\leq d-1} \sum_i \mx(x^k_i)\leq p \right\} \nonumber \\
&=: S_1+S_2.\label{rozbicie}
\end{align}

We have
\begin{align}
S_1&\leq \E \sup \left\{\sum_i  a_i  x^1_i    X^d_i \ \Big{|} \ \sum_i \mx(x^1_i)\leq p \right\} \nonumber \\
&\ \ \ +\summ{I \subset \left([d-1]\setminus \left\{1\right\}\right)}{I \neq \emptyset} \E \sup \left\{\sum_i  a_i   \prod_{k \in I}  x^k_i  X^d_i  \ \Big{|} \ \forall_{2 \leq k\leq d-1} \sum_i \mx(x^k_i)\leq p \right\} \nonumber \\
&\leq 2^{d-2} \E \sup \left\{\sum_i  a_i  x^2_i  \prod_{k=3}^{d-1} (1+ x^k_i ) X^d_i   \ \Big{|} \ \forall_{2 \leq k\leq d-1} \sum_i \mx(x^k_i)\leq p \right\} \leq C(d)\robnorm{(a_i)}{d}, \label{s1}
\end{align}
where the last inequality follows by the induction assumption.

Now we bound $S_2$. Since $a_i$ are nonnegative,
\begin{align}
S_2 &\leq 2^d \E \sup \left\{\sum_i a_i  x^1_i \1_{\{x^1_i > 1\}} \prod_{k=2}^{d-1}(1+ x^k_i\1_{\{x^k_i> 1\}}) \left|X^d_i\right| \ \Big{|} \ \forall_{k\leq d-1} \sum_i \mx(x^k_i)\leq p \right\} \nonumber \\
&\leq 2^d \E \sup \left\{\sum_i  a_i   \prod_{k=1}^{d-1}  x^k_i \1_{\{x^k_i> 1\}} \lm X^d_i \rmo \ \Big{|} \ \forall_{k\leq d-1} \sum_i \mx(x^k_i)\leq p \right\} \nonumber \\
&\ \ \ + 2^d \summ{I \subsetneq [d-1]}{I \neq \emptyset} \E \sup \left\{\sum_i  a_i   \prod_{k \in I}  x^k_i \1_{ \{ x^1_i  > 1 \}} \lm X^d_i \rmo \ \Big{|} \ \forall_{k\leq d-1} \sum_i \mx(x^k_i)\leq p \right\}. \label{brzydka}
\end{align} 
We can bound the second term in \eqref{brzydka} by using the induction assumption. So it is enough to show that 
\begin{align}
&\E \sup \left\{\sum_i  a_i   \prod_{k=1}^{d-1}  x^k_i \1_{\{x^k_i > 1\}} \lm X^d_i \rmo \ \Big{|} \ \forall_{k\leq d-1} \sum_i \mx(x^k_i)\leq p \right\} \leq C(d) \robnorm{(a_i)}{d}.
\label{zmiana}
\end{align}
Let $T,U,V$ be the sets defined in \eqref{T}-\eqref{V}. Since $T \subset U+V$ we have
\begin{align}
&\E \sup \left\{\sum_i  a_i   \prod_{k=1}^{d-1}  x^k_i \1_{\{x^k_i > 1\}}  \lm X^d_i \rmo \ \Big{|} \ \forall_{k\leq d-1} \sum_i \mx(x^k_i)\leq p \right\} \nonumber \\
&\leq \E \sup_{x^1,\ldots,x^{d-1} \in U} \sum_i a_i \prod_{k=1}^{d-1} x^k_i |X^d_i|+\sum_{\stackrel{I \subset [d-1]}{I\neq \emptyset}} \E \sup_{x^i \in U,\ i\in I}\sup_{x^i\in V,\ i \notin I} \sum_i a_i \prod_{k=1}^{d-1} x^k_i|X^d_i| \nonumber \\
&\leq  \E \sup_{x^1,\ldots,x^{d-1} \in U} \sum_i a_i \prod_{k=1}^{d-1} x^k_i |X^d_i|+\left(2^{d-1}-1\right)\E \sup_{x^1,\ldots,x^{d-2} \in T,\ x^{d-1} \in V} \sum_i a_i \prod_{k=1}^{d-1} x^k_i |X^d_i|. \label{cosik}
\end{align}
In the last inequality we used the symmetry and inclusions $U,V\subset T$. Now \eqref{zmiana} follows by \eqref{cosik}, \eqref{latU} and \eqref{latV}.
\end{proof}

\begin{lem}\label{7}
We have 
\begin{align}
\lv \sum_i a_i \Xii \rv_p \sim^d \robnorm{(a_i)}{d}. \label{7.1}
\end{align}

\end{lem}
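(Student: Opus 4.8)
The plan is to prove the two inequalities of \eqref{7.1} separately. The lower bound is essentially already in hand: as observed in Remark \ref{zdolu}, the proof of the lower estimate in Lemma \ref{momkr} never used the restriction $i\leq p$, so running it for the full sequence gives $\lv\sum_i a_i\Xii\rv_p\geq c(d)\robnorm{(a_i)}{d}$ at once. Thus only the upper bound needs work, and I would prove it by induction on the number of factors $d$, using throughout that the pieces $X^1_i,\dots,X^d_i$ are i.i.d.\ in $k$, symmetric, have log-concave tails, satisfy $\E|X^1_i|\sim^d 1$ and the normalization $\inf\{t:M^X_i(t)\geq 1\}=1$, so that the statement for fewer factors applies verbatim to any sub-collection of them.

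For $d=1$ we have $\Xii=X^1_i$, which has log-concave tails, and $\robnorm{(a_i)}{1}=\sup\{\sum_i a_i x_i:\sum_i\m(x_i)\leq p\}$; so \eqref{7.1} is exactly the Gluskin--Kwapie\'n bound, Lemma \ref{gl}. For the inductive step fix $d\geq 2$, assume \eqref{7.1} for $d-1$, and condition on $(X^d_i)$. Conditionally $\sum_i a_i\Xii=\sum_i(a_iX^d_i)\prod_{k=1}^{d-1}X^k_i$, so the induction hypothesis applied to the deterministic coefficients $(a_iX^d_i)$ gives $\E_{X^1,\dots,X^{d-1}}|\sum_i a_i\Xii|^p\leq C(d)^p\robnorm{(a_iX^d_i)}{d-1}^p$; integrating over $X^d$ yields $\lv\sum_i a_i\Xii\rv_p\leq C(d)\lv\robnorm{(a_iX^d_i)}{d-1}\rv_p$. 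Everything is thereby reduced to the single inequality $\lv\robnorm{(a_iX^d_i)}{d-1}\rv_p\leq C(d)\robnorm{(a_i)}{d}$.

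To prove the latter I would split $a_iX^d_i=a_iX^d_i\1_{|X^d_i|\leq 1}+a_iX^d_i\1_{|X^d_i|>1}$ and apply the triangle inequality for $\robnorm{\cdot}{d-1}$. The bounded part is deterministic: since $\m$ is nondecreasing on $[0,\infty)$ and $|X^d_i\1_{|X^d_i|\leq 1}|\leq 1$, one absorbs the factor $X^d_i\1_{|X^d_i|\leq 1}$ into the variable $x^1$ in \eqref{robocza}, so $\robnorm{(a_iX^d_i\1_{|X^d_i|\leq 1})}{d-1}\leq\robnorm{(a_i)}{d-1}\leq\robnorm{(a_i)}{d}$ (the last step taking $x^d\equiv 0$). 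For the unbounded part, a routine sign reduction shows $\robnorm{(a_iX^d_i\1_{|X^d_i|>1})}{d-1}$ is the supremum, over admissible $y$ (that is, $\sum_i\m(y^k_i)\leq p$ for each $k$), of the non-negative linear functionals $\sum_i c_i(y)|X^d_i|\1_{|X^d_i|>1}$, where $c_i(y)=a_iy^1_i\prod_{k=2}^{d-1}(1+y^k_i)\geq 0$, of the variables $|X^d_i|$, which satisfy \eqref{mc} with $\al=2$. Hence a weak/strong moment comparison for suprema of linear combinations of such variables (the Lata\l{}a--Strzelecka comparison \cite{LatSt}) reduces the estimate to two bounds: $\E\robnorm{(a_iX^d_i)}{d-1}\leq C(d)\robnorm{(a_i)}{d}$, which is precisely Lemma \ref{7.5}; and $\sup_y\lv\sum_i c_i(y)X^d_i\rv_p\leq C(d)\robnorm{(a_i)}{d}$, which follows from the Gluskin--Kwapie\'n bound $\lv\sum_i c_i(y)X^d_i\rv_p\sim\sup\{\sum_i c_i(y)z_i:\sum_i\m(z_i)\leq p\}$ together with the trivial estimate $z_i\leq 1+z_i$ that turns $z$ into the $d$-th factor of \eqref{robocza}.

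The step I expect to be the real obstacle is exactly this last one: upgrading the expectation bound of Lemma \ref{7.5} to an $L_p$-bound on $\robnorm{(a_iX^d_i)}{d-1}$, since the variables at hand carry no dimension-free concentration inequality. The weak/strong moment comparison above is the clean way around it; a more self-contained (but longer) alternative is to re-run the decomposition $T\subset U+V$ of Lemma \ref{pozbiorach} directly at the level of $L_p$-norms of the random sums, treating the low-index block $\{i\leq 2p\}$ by the $\Gamma(p,1)$-domination of $\sum_{i\leq 2p}M^X_i(|X^d_i|)$ and the high-index block by the cardinality bound $|\J|\leq C^p$, so that a union bound over $\J$ costs only a constant factor at moment $p$. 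Once $\lv\robnorm{(a_iX^d_i)}{d-1}\rv_p\leq C(d)\robnorm{(a_i)}{d}$ is established the induction closes and \eqref{7.1} follows.
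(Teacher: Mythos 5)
Your proposal is correct and follows essentially the same route as the paper: induction on $d$ with the Gluskin--Kwapie\'n bound as the base case, conditioning on $X^d$ to reduce everything to $\lv\robnorm{(a_iX^d_i)}{d-1}\rv_p\leq C(d)\robnorm{(a_i)}{d}$, and then the Lata{\l}a--Strzelecka comparison (Theorem \ref{9}) to split this into the $L_1$ bound of Lemma \ref{7.5} plus a supremum of $L_p$-norms of linear forms handled by Gluskin--Kwapie\'n. The only deviation is your preliminary truncation into $|X^d_i|\le 1$ and $|X^d_i|>1$, which is unnecessary (and forces you to check the moment condition for the non-symmetric truncated variables): Theorem \ref{9} applies directly to $\robnorm{(a_iX^d_i)}{d-1}$ viewed as a supremum of linear forms in the symmetric $X^d_i$ over the symmetric admissible index set, which is exactly what the paper does.
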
 
\begin{proof}
We prove \eqref{7.1} by an induction. In case of $d=1$ it follows by Gluskin-Kwapie{\'n} bound \ref{gl}. Assume \eqref{7.1} hold for any $1,2,\ldots,d-1$. The lower bound of $\lv \sum_i a_i \Xii \rv_p$ follows by Remark \ref{zdolu}.

Now we prove
$$ \lv \sum_i a_i \Xii \rv_p \leq C(d) \robnorm{(a_i)}{d}. $$

Induction assumptions imply
\begin{align}
&\lv \sum_i a_i \Xii \rv_p \leq C(d) \lv \robnorm{(a_i X^d_i)}{d-1} \rv_p \nonumber \\
&\leq C(d) \E \robnorm{(a_i X^d_i)}{d-1} 
+C(d)\sup  \left\{ \lv \sum_i a_i  x^1_i \prod_{k=2}^{d-1} (1+x^k_i) X^d_i\rv_p \ \Big{|} \ \forall_{k\leq d-1} \sum_i \mx(x^k_i)\leq p \right\} , \label{pier} 
\end{align}
where in the second inequality we used  Theorem \ref{9} (it is an easy exercise that if $Z$ is symmetric random variable with log-concave tail then $\lv Z \rv_{2p} \leq 2 \lv Z \rv_p$ for $p\geq 1$) . The
Gluskin-Kwapie{\'n} estimate (i.e the first step of the induction) gives
\begin{align}
\sup  \left\{ \lv \sum_i a_i  x^1_i \prod_{k=2}^{d-1} (1+x^k_i) X^d_i\rv_p \ \Big{|} \ \forall_{k\leq d-1} \sum_i \mx(x^k_i)\leq p \right\}\leq C(d) \robnorm{(a_i)}{d}. \label{drugczesc}
\end{align}

The assertion follows by \eqref{pier}, Lemma \ref{7.5} and \eqref{drugczesc}.
\end{proof}

Theorem \ref{jeden} follows by Lemmas \ref{6}, \ref{7} and \ref{porsum}.

Observe that Lemma \ref{gwiazda} and \eqref{lepwzor} imply that moments of $\sum_i a_i X_i$ cannot grow too quickly, namely
\begin{equation}
\lv \sum_i a_i X_i \rv_{2p} \leq C(d) \momp{\sum_i a_i X_i} \textrm{ for } p
\geq 1.\label{momsum} 
\end{equation}

\section{Estimates for suprema of processes}\label{sekcja4}
\newcommand{\gv}{\textbf{v}}
The main difficulty in proving Theorem \ref{1} is to properly bound $\E \sup_{x \in \Hat{T}} \sum_{i,j} \aaa x_i Y_j$
in two cases:  
\begin{enumerate}
\item $\Hat{T}=\left\{x \in \R^n\ \Big{|} \ \sum_i \mx(x_i)\leq p \right\} \subset \sqrt{p}B^n_2+pB^n_1$,
\item $\Hat{T}=\left\{ \left(\prod_{k=1}^d x^k_i \1_{\{x^k_i\geq 1\}}\right)_i \in \R^n\ \Big{|} \ \forall_{k\leq d} \sum_i \mx(x^k_i)\leq p \right\} $.
\end{enumerate}
We start with two lemmas  which are responsible for solving the second case. The rest of the section is devoted for developing some decomposition lemmas. They will be used to handle the first case.

Firstly, we show that Theorem \ref{1} holds under additional assumption that the support of the sum is small.
\begin{lem}\label{momkrt}
For any $p\geq 1$, and set $I\subset [n],\ |I|\leq p$,
\begin{align}
\lv \sum_{i \in I, j} \aaa X_i Y_j \rv_p \leq C(d) \norxy{(\aaa)_{i\in I,j}}.  \label{momkro}
\end{align}
\end{lem}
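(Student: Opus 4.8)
The plan is to imitate the strategy used in Lemma \ref{momkr} (the one-dimensional small-support bound), now carried out for the bilinear form, and to reduce everything to statements already proved in Sections \ref{prel}--\ref{sekcja3}. First I would replace each $X_i$ and each $Y_j$ by the product of log-concave pieces furnished by Lemma \ref{5}: using the contraction principle and the triangle inequality exactly as in Lemma \ref{6} (applied conditionally, first in the $X$-variables, then in the $Y$-variables), one gets
\begin{align*}
\lv \sum_{i\in I,j} \aaa X_i Y_j \rv_p \sim^d \lv \sum_{i\in I,j} \aaa \Big(\prod_{k=1}^d X^k_i\Big)\Big(\prod_{k=1}^d Y^k_j\Big) \rv_p,
\end{align*}
so it suffices to bound the right-hand side by $C(d)\norxy{(\aaa)_{i\in I,j}}$. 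Since $|I|\le p$, the sum over $i$ really has at most $\p$ terms, which is the feature that will let the argument close.

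Next I would peel off the last factor $Y^d_j$ by conditioning on the $Y$'s. For fixed values of $(X^k_i,Y^k_j)_{k\le d}$ except $Y^d$, the inner sum $\sum_j b_j Y^d_j$ with $b_j = \big(\sum_{i\in I}\aaa \prod_k X^k_i\big)\prod_{k=1}^{d-1}Y^k_j$ is a linear combination of log-concave variables, so Gluskin--Kwapie\'n (Lemma \ref{gl}, the $d=1$ case of Lemma \ref{momkr}) identifies its $p$-th moment with a $\robnorm{\cdot}{1}$-type norm; iterating down through $Y^{d-1},\ldots,Y^1$ via Lemma \ref{momkr} (with support $[n]$ in the $Y$-index, which is allowed by Remark \ref{zdolu} for the relevant direction, and by the actual Lemma \ref{momkr} statement once one also uses Lemma \ref{porsum} to pass between $\norxp{\cdot}{Y,p}$ and $\robnormp{\cdot}{1}{p}$-norms) converts $\lv \sum_{i\in I,j}\aaa X_i Y_j\rv_p$ into the $p$-th moment of $\norxp{(\sum_{i\in I}\aaa X_i)_j}{Y,\cdot}$, up to $C(d)$. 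At this stage what remains is a one-dimensional chaos in the $X$-variables only, with coefficients that are themselves a norm of the matrix, and crucially the support in $i$ is contained in $I$ with $|I|\le p$.

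Then I would apply the genuinely small-support result Lemma \ref{momkr} in the $X$-variables: because $|I|\le p$, Lemma \ref{momkr} gives $\lv \sum_{i\in I} c_i X_i\rv_p \le C(d)\robnorm{(c_i)_{i\in I}}{d} \le C(d)\norxp{(c_i)_{i\in I}}{X,p}$ (using Lemma \ref{porsum}), applied with $c_i$ replaced by the appropriate supremum/norm expression in the $Y$-data; combining this with the definition \eqref{normxy} of $\norxy{\cdot}$ — which is exactly $\sup\{\norxp{(\sum_j\aaa y_j)_i}{X,p} : \sum_j \ny(y_j)\le p\}$, as recorded in the proof of Corollary \ref{wzrost} — collapses the nested suprema into $\norxy{(\aaa)_{i\in I,j}}$. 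The only delicate bookkeeping is making sure the iterated-supremum-of-norms obtained from the $Y$-side matches the constraint set $\{\sum_j \ny(y_j)\le p\}$ defining $\norxy{\cdot}$; this is handled by Lemma \ref{porsum} together with the factorized description of $\norxy{\cdot}$.

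The main obstacle is the step where one interchanges the conditional moment estimates with the outer suprema: one must integrate the high-probability bounds (as in the passage from \eqref{prawd} to \eqref{mo} in Lemma \ref{momkr}, and the chaining through $\J$ in Lemma \ref{pozbiorach}) so that $\E$ of a norm of a random matrix is dominated by the deterministic norm of the expected/constraint matrix — here with the extra $Y$-variables present. I expect this to go through because, after conditioning, all the randomness that has not yet been peeled off is either (i) a product of log-concave pieces to which Lemma \ref{momkr}/Lemma \ref{7.5} applies verbatim, or (ii) absorbed into a supremum over a constraint set of the type $\{\sum \m(x^k_i)\le p\}$, and the cardinality bound $|I|\le p$ keeps the entropy terms of the right order.
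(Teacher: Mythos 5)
There is a genuine gap, and it sits exactly at the point you flag as ``the main obstacle''. The paper's proof is three lines: condition on the $X$'s and apply Theorem \ref{jeden} to the $Y$-sum, which bounds $\lv \sum_{i\in I,j}\aaa X_iY_j\rv_p$ by $C(d)\bigl(\E\sup_{\sum_j\ny(y_j)\leq p}\lm\sum_{i\in I,j}\aaa X_iy_j\rmo^p\bigr)^{1/p}$; then interchange the $L_p$-norm with the supremum using Fact \ref{prockrt}; then apply Theorem \ref{jeden} once more for each fixed $y$. The decisive step is the interchange, and Fact \ref{prockrt} is precisely the tool: the supremum is over a bounded subset of $\R^{|I|}$ with $|I|\leq p$, so a $1/2$-net of cardinality $5^{\lfloor p\rfloor}$ exists, and the union bound in $L_p$ costs only $(5^p)^{1/p}=5$. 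Your proposal never invokes Fact \ref{prockrt} or any equivalent net argument; the mechanisms you do cite for the interchange do not deliver it in the form you need. The Gamma-tail argument (\eqref{prawd}$\to$\eqref{mo}) requires the variables being integrated out to be log-concave with support of size at most $p$, which after your peeling would force you to descend through all $d$ layers $X^d,\ldots,X^1$ with matrix-valued coefficients and suprema over the $y$-constraint set carried along --- essentially redoing Section 4 rather than quoting it --- and the chaining through $\J$ is designed to decompose the set $V$, not to commute $\E(\cdot)^p$ with a supremum over the $\ny$-ball. Asserting that ``the cardinality bound $|I|\le p$ keeps the entropy terms of the right order'' identifies the right heuristic but is not a proof of the step on which the whole lemma rests.

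Two smaller points. First, your appeal to Remark \ref{zdolu} to apply Lemma \ref{momkr} ``with support $[n]$ in the $Y$-index'' is incorrect: that remark only says the \emph{lower} bound of Lemma \ref{momkr} needs no support restriction, whereas you need the upper bound on the $Y$-side; this is repairable by using Theorem \ref{jeden} (equivalently Lemma \ref{7} plus Lemma \ref{porsum}) instead, which holds with full support. Second, the replacement of $X_i$ and $Y_j$ by products of log-concave factors is unnecessary here: Theorem \ref{jeden} can be used as a black box on each side, and introducing the factorization is what forces you into the layer-by-layer peeling that creates the interchange problem in a harder form than necessary.
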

\begin{proof}
We have
\begin{align*}
\lv \sum_{i \in I,j} \aaa X_i Y_j \rv_p &\leq C(d) \left( \E \sup_{\sum_j \ny(y_j)\leq p} \lm \sum_{i \in I,j} \aaa X_i y_j \rmo^p \right)^{\frac{1}{p}}\leq C(d)\sup_{\sum_j \ny(y_j)\leq p} \left( \E  \lm \sum_{i \in I,j} \aaa X_i y_j \rmo^p \right)^{\frac{1}{p}} \\
&\leq C(d) \norxy{(\aaa)_{i \in I,j}},
\end{align*}
where the first inequality follows by a conditional application of Theorem \ref{jeden}, the second one by Fact \ref{prockrt} and the last one by Theorem \ref{jeden}.

\end{proof}

From now till the end of this section we assume (without loss of generality) the following condition
\begin{align}
\textrm{the functions } i\rightarrow \sum_j \aaa^2,\ j \rightarrow \sum_i \aaa^2 \textrm{ are nonincreasing.} \label{monot}
\end{align}

\begin{lem}[cf. Lemma \ref{7.5}] 
Let $T,U,V$ be the sets defined in \eqref{T}-\eqref{V}. Then
\begin{align}
&S_1:=\E \sup_{x^1,\ldots,x^d \in U} \sum_{i,j} \aaa \prod_{k=1}^d x^k_i  Y_j \leq C(d) \nory{\left(\sqrt{\sum_i \aaa^2}\right)_j}, \label{poU} \\
&S_2:=\E \sup_{x^1,\ldots,x^{d-1}\in T,x^d\in V } \sum_{i,j} \aaa \prod_{k=1}^d x^k_i Y_j \leq C(d) \norxy{(\aaa)_{i,j}}. \label{poV}
\end{align}
\end{lem}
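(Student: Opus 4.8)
The idea mirrors the proof of Lemma~\ref{7.5} in the one-dimensional case, with the extra $Y_j$ summation handled by a conditional application of Theorem~\ref{jeden} (equivalently, of the one-dimensional moment bound) to reduce everything to $L_p$-norms of linear combinations of $Y_j$'s, and then to expectations of suprema of the resulting coefficient vectors. For \eqref{poU}, I would first fix $x^1,\ldots,x^d\in U$ and apply Theorem~\ref{jeden} conditionally on the $x^k$'s to get $\E_Y|\sum_{i,j}\aaa\prod_k x^k_i Y_j|\lesssim^d \nory{(\sum_i \aaa\prod_k x^k_i)_j}$; then I would pull the supremum over $U$ inside using Fact~\ref{prockrt}-type arguments (or rather bound the expected sup by the sup over $U$ of $\E_Y$, since $U$ is bounded and we only need the $L_1$ estimate). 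This leaves us to bound $\sup_{x^k\in U}\nory{(\sum_i\aaa\prod_k x^k_i)_j}$. Using $U\subset\bigcap_l\{\mx(v_i)\le l^3,\ i\in(2^lp,2^{l+1}p]\}\cap\{v_i=0,\ i\le 2p\}$, I would decompose $\sum_i=\sum_l\sum_{i\in(2^lp,2^{l+1}p]}$ exactly as in \eqref{kawal}--\eqref{oszacowanieU}, estimate the cardinality of the relevant index sets by $C^{p+l}$, and use the monotonicity assumption \eqref{monot} to get the decay $\sqrt{\sum_i\aaa^2}\big|_{i\approx 2^lp}\lesssim (2^lp)^{-1/2}\sqrt{\sum_{i>p}\sum_j\aaa^2}$; summing the geometric-type series in $l$ gives $\lesssim^d \sqrt{p}\sqrt{\sum_{i>p}\sum_j\aaa^2}$, and by Lemma~\ref{rad} this is controlled by $\nory{(\sqrt{\sum_i\aaa^2})_j}$.

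For \eqref{poV}, I would use the combinatorial structure of $V$: since $x^d\in V$ forces the support of $x^d$ to lie in a set $I\in\J$ (the family defined in \eqref{trojkat}), with $|\J|\le C^p$ by \eqref{mocJ}, I can write $S_2\le\sup_{I\in\J}\E\sup\{\sum_{i\in I,j}\aaa\prod_{k\le d-1}x^k_i x^d_i Y_j\mid \forall_{k\le d-1}\sum_i\mx(x^k_i)\le p\}$ after extracting the $C^p$ factor via an $L_p$-aggregation (a $(\sum_{I\in\J}(\cdot)^p)^{1/p}$ bound, as at the end of the proof of Lemma~\ref{pozbiorach}). For each fixed $I$ of size $\le p$, Lemma~\ref{momkrt} (the small-support case of Theorem~\ref{1}, already proved above) conditionally on $x^1,\ldots,x^{d-1}$ together with the structure of the norms gives $\lesssim^d \norxy{(\aaa)_{i\in I,j}}\le\norxy{(\aaa)_{i,j}}$, after absorbing the products $\prod_{k}(1+x^k_i)$ into the definition of the $\robnorm{\cdot}{d-1}$-type norm exactly as in \eqref{s1}--\eqref{brzydka}; one also needs a conditional $\robnorm{(a_i X^d_i)}{d-1}\lesssim\robnorm{(a_i)}{d}$-style step, i.e.\ an analogue of Lemma~\ref{7.5} but with the scalar $a_i$ replaced by the ``frozen'' vector $(\aaa)_j$, which is why the statement is phrased with the $X,Y,p$-norm on the right.

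The main obstacle I anticipate is the bookkeeping needed to pass from the pointwise (in $x^k$) estimates to the estimate of the expected supremum while keeping all constants dimension-dependent only: one must be careful that the $C^{p+l}$ counting bounds interact correctly with the $l^3$-truncation levels defining $U$ and $V$ (this is precisely the delicate part of \eqref{kawal}--\eqref{oszkwa} and \eqref{mocJ}), and that the reduction via Theorem~\ref{jeden}/Lemma~\ref{momkrt} is applied on the correct conditioning $\sigma$-algebra so that the suprema over the $x^k$ remain outside the $Y$-expectation. A secondary technical point is verifying $T\subset U+V$ and the symmetry argument that lets us reduce all ``mixed'' terms $x^i\in U$ for $i\in I$, $x^i\in V$ for $i\notin I$ to the single term with $x^1,\ldots,x^{d-1}\in T$, $x^d\in V$, exactly as in \eqref{cosik}; this should go through verbatim since it uses nothing about the $Y_j$'s.
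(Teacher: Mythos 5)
Your treatment of \eqref{poV} essentially matches the paper's: restrict to supports $I\in\J$, use $|\J|\le C^p$ together with an $L_p$-aggregation over $\J$, and on each fixed $I$ apply the conditional one-dimensional bound followed by Lemma \ref{momkrt} to land on $\norxy{(\aaa)_{i\in I,j}}\le\norxy{(\aaa)_{i,j}}$. That half is sound.

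The argument for \eqref{poU}, however, has a genuine gap. Your primary route --- ``bound the expected sup by the sup over $U$ of $\E_Y$'' --- is an invalid interchange: one only has $\sup_{x}\E_Y(\cdot)\le\E\sup_{x}(\cdot)$, and the possible discrepancy between these two quantities is precisely what the lemma is about; Fact \ref{prockrt} cannot repair this, since after localizing to the block $(2^lp,2^{l+1}p]$ the supremum runs over subsets of $2^lp$ indices, not $\lfloor p\rfloor$ of them. Your fallback --- repeating \eqref{kawal}--\eqref{oszacowanieU} with its $C^{p+l}$ counting --- also does not go through as stated, because the key estimate \eqref{oszkwa} there rests on Lemma \ref{A.6}, which requires the underlying variables to have log-concave tails; here the role of $|X^d_i|$ is played by $Z_i=\bigl|\sum_j\aaa Y_j\bigr|$, which need not have log-concave tails. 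The paper's actual mechanism, which is absent from your sketch, is the following: after reducing on the block $(2^lp,2^{l+1}p]$ to $l^{3d}\,\E\sup_{|I|=p}\sum_{i\in I}|Z_i|$, apply H\"older in the form $\sum_{i\in I}|Z_i|\le p^{3/4}\bigl(\sum_{i\in I}Z_i^4\bigr)^{1/4}$, drop the supremum over $I$ by extending the inner sum to the whole block, and then use Jensen together with $\lv Z_i\rv_4\le C(d)\lv Z_i\rv_2=C(d)\sqrt{\sum_j\aaa^2}$ (a consequence of \eqref{momsum}); the monotonicity \eqref{monot} then yields the decay $\sum_j\aaa^2\le B^2/(i-p)$ that makes the series in $l$ summable, and Lemma \ref{rad} converts $\sqrt{p}\,B$ into $\nory{(\sqrt{\sum_i\aaa^2})_j}$. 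Without this (or some other device exploiting the bounded $L_4/L_2$ ratio of $\sum_j\aaa Y_j$), your proof of \eqref{poU} does not close.
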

\begin{proof}
We begin with \eqref{poU}. By the symmetry of $\mx$,
\begin{align*}
S_1=\E \sup_{x^1,\ldots,x^d \in U} \sum_{i}\prod_{k=1}^d x^k_i \left| \sum_j \aaa Y_j \right| \leq \sum_{l=1}^\infty  \E \sup_{x^1,\ldots,x^d \in U} \sum_{2^lp<i\leq 2^{l+1}p}  \prod_{k=1}^d x^k_i \left| \sum_j \aaa Y_j \right|. 
\end{align*}
So by \eqref{ulubionaetykieta} 

\begin{align*}
S_1&\leq \sum_{l=1}^\infty  \E \sup \left\{ \sum_{2^lp<i\leq 2^{l+1}p}  \prod_{k=1}^d x^k_i \Big{|} \sum_j \aaa Y_j \Big{|} \ \Bigg{|} \ \forall_{1\leq k \leq d} \sum_{i=2^lp+1}^{2^{l+1}p} |x^k_i|\leq p,\ 1\leq |x_i| \leq l^3 \textrm{ or } x_i=0 \right\} \\
&\leq \sum_{l=1}^\infty l^{3d} \E \sup_{\stackrel{|I|=p}{I\subset (2^lp,2^{l+1}p]}} \sum_{i \in I} \left|\sum_j \aaa Y_j \right|\leq \sum_{l=1}^\infty l^{3d}p^{\frac{3}{4}}  \E \sup_{\stackrel{|I|=p}{I\subset (2^lp,2^{l+1}p]}} \left( \sum_{i \in I} \left(\sum_j \aaa Y_j\right)^4 \right)^{\frac{1}{4}}\\
&\leq \sum_{l=1}^\infty l^{3d}p^{\frac{3}{4}}  \E \left( \sum_{2^lp<i\leq 2^{l+1}p} \left(\sum_j \aaa Y_j\right)^4 \right)^{\frac{1}{4}},
\end{align*}
where the third inequality follows by the H{\"o}lder inequality. Now using the Jensen inequality and \eqref{momsum}  
\begin{align}
&\sum_{l=1}^\infty l^{3d}p^{\frac{3}{4}}  \E \left( \sum_{2^lp<i\leq 2^{l+1}p} \left(\sum_j \aaa Y_j\right)^4 \right)^{\frac{1}{4}} \leq \sum_{l=1}^\infty l^{3d}p^{\frac{3}{4}}  \left( \sum_{2^lp<i\leq 2^{l+1}p} \lv \sum_j \aaa Y_j \rv_4^4 \right)^{\frac{1}{4}} \nonumber \\
&\leq C(d) \sum_{l=1}^\infty l^{3d}p^{\frac{3}{4}}  \left( \sum_{2^lp<i\leq 2^{l+1}p} \lv \sum_j \aaa Y_j \rv_2^4 \right)^{\frac{1}{4}} \leq C(d) \sum_{l=1}^\infty l^{3d}p^{\frac{3}{4}}  \left( \sum_{2^lp<i\leq 2^{l+1}p} \left(\sum_j \aaa^2 \right)^2 \right)^{\frac{1}{4}}. \label{wzor}
\end{align}
Denote $B=\sqrt{\sum_{\stackrel{i\geq p}{j\geq 1}} \aaa^2}$. Since $i\rightarrow \sum_j \aaa^2$ is nonincreasing we have that
$$\sum_j \aaa^2 \leq \frac{B^2}{i-p} \textrm{ for } i>p.$$
Using the above estimate in \eqref{wzor} gives
\begin{align*}
\sum_{l=1}^\infty l^{3d}p^{\frac{3}{4}}  \left( \sum_{2^lp<i\leq 2^{l+1}p} \left(\sum_j \aaa^2 \right)^2 \right)^{\frac{1}{4}}&\leq \sum_{l=1}^\infty l^{3d}p^{\frac{3}{4}}  \left( \sum_{i>2^lp} \frac{B^4}{(i-p)^2} \right)^{\frac{1}{4}}\\
&\leq  \sum_{l=1}^\infty p^{\frac{3}{4}} l^{3d} \left( \frac{CB^4}{2^lp}\right)^\frac{1}{4}\leq C(d)\sqrt{p} B.
\end{align*}
By \eqref{monot} and Lemma \ref{rad} we have
$$\frac{\sqrt{p}}{2}B\leq \sup\left\{\sum_i \sqrt{\sum_j \aaa^2}t_i \ \Big{|} \ \sum_i t^2_i \leq p,\ |t_i|\leq 1 \right\}\leq  \nory{\left(\sqrt{\sum_i \aaa^2} \right)_j} .$$
As a consequence
\begin{align*}
S_1 \leq C(d)\nory{\left(\sqrt{\sum_i \aaa^2} \right)_j}.
\end{align*}

Now we bound $S_2$. Let $\J$ be defined by \eqref{trojkat} and let $I\in \J$ be arbitrary. Using conditionally \eqref{lepwzor}, Lemma \ref{porsum} and the Jensen inequality
\begin{align*}
&\E \sup_{x^1,\ldots,x^{d-1}\in T, x^d \in V} \sum_{i \in I} \prod_{k=1}^d x^k_i \left| \sum_j \aaa Y_j \right| \leq \E \sup_{x^1,\ldots,x^d \in T} \sum_{i \in I} \prod_{k=1}^d x^k_i \left| \sum_j \aaa Y_j \right| \\
&\leq C(d) \E_Y \left( \E_X \left| \sum_{i \in I,j} \aaa X_i Y_j \right|^p \right)^{\frac{1}{p}} \leq C(d)\lv \sum_{i \in I,j} \aaa X_i Y_j \rv_p.
\end{align*}
By Lemma \ref{momkrt} 
\begin{align*}
\lv \sum_{i \in I,j} \aaa X_i Y_j \rv_p \leq C(d) \norxy{(\aaa)_{i \in I,j}}. 
\end{align*}
So we conclude that
\begin{align}
\E \sup_{x^1,\ldots,x^{d-1}\in T, x^d \in V} \sum_{i \in I} \prod_{k=1}^d x^k_i \left| \sum_j \aaa Y_j \right| \leq C(d) \norxy{(\aaa)_{i \in I,j}} \leq C(d) \norxy{(\aaa)_{i,j}}. \label{cos}
\end{align}

By \eqref{mocJ} and \eqref{cos}  
\begin{align*}
S_2&\leq C(d) \left(\sum_{I \in \J} \left( \E \sup_{x^1,\ldots,x^{d-1}\in T,x^d\in V } \sum_{i\in I} \prod_{k=1}^d x^k_i \left| \sum_j \aaa Y_j \right| \right)^p \right)^\frac{1}{p}  \\
&\leq C(d) \sup_{I \in \J} \norxy{(\aaa)_{i \in I,j}} \leq C(d) \norxy{(\aaa)_{i,j}}. 
\end{align*}

\end{proof}

As it was announced earlier, we proceed with the study of decomposition lemmas.

It is well know (cf. Lemma 3 in \cite{Latgaus}) that if  $T=\bigcup_{k=1}^m T_l$ then
$$\E \sup_{t \in T} \sum_i t_i g_i \leq \max_{k \leq m} \E \sup_{t \in T_k} \sum_i t_i g_i +
C \sqrt{\log(m)} \sup_{s,t \in T} \lv \sum_i (t_i-s_i)g_i \rv_2. $$
We will generalize this formula to any variables from the $\Sd(d)$ class.

\begin{cor} \label{10}
For any $p \geq 1$ we have
$$
\Pro\left(\sup_{t \in T} \left| \sum_i t_i X_i \right| \geq C(d) \left( \E \sup_{t \in T} \left|\sum_i t_i X_i \right| + \sup_{t \in T}\norx{(t_i)}    \right) \right)\leq e^{-p}.$$
\end{cor}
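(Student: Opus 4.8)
The plan is to deduce Corollary~\ref{10} from a concentration (deviation) inequality for $\sup_{t\in T}|\sum_i t_iX_i|$ around its mean, where the fluctuation scale is measured by $\sup_{t\in T}\norx{(t_i)}$. More precisely, I would first establish a moment bound of the form
\begin{align}
\lv \sup_{t\in T}\Big|\sum_i t_iX_i\Big| \rv_p \leq C(d)\Big( \E\sup_{t\in T}\Big|\sum_i t_iX_i\Big| + \sup_{t\in T}\norxp{(t_i)}{p}\Big), \label{plan:mom}
\end{align}
and then Corollary~\ref{10} follows immediately by Chebyshev's inequality, since $\norxp{(t_i)}{p}$ is exactly the quantity that governs the $p$-th moment of a single linear form $\sum_i t_iX_i$ by Theorem~\ref{jeden}, and by Lemma~\ref{gwiazda} it is comparable to $\norx{(t_i)}$ up to constants depending on $d$ as long as $p$ is of constant order — but here $p$ is the exponent itself, so one keeps $\norxp{(t_i)}{p}$ throughout and only at the very end writes $\norx{\cdot}=\norxp{\cdot}{1}$ when $p=1$; for general $p$ the statement with $\sup_{t\in T}\norx{(t_i)}$ on the right is weaker than \eqref{plan:mom} by Lemma~\ref{gwiazda}, so it suffices to prove \eqref{plan:mom}.

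To prove \eqref{plan:mom} I would use the replacement device of Lemma~\ref{5}: write $X_i=\eps_i|X_i|$ and compare $|X_i|$ with $C(d)(|X^1_i\cdots X^d_i|+1)$, reducing (via the contraction principle and the triangle inequality, as in Lemma~\ref{6}) to a supremum of a Bernoulli chaos-type process $\sup_{t\in T}|\sum_i t_i\eps_i\prod_{k=1}^d|X^k_i||$. The key tool is then a symmetrization/Gaussian-type deviation inequality: conditionally on the $X^k_i$'s the process is a linear Bernoulli (hence sub-Gaussian) process, and one controls its deviation above the conditional mean by $\sup_{t\in T}(\sum_i t_i^2\prod_k (X^k_i)^2)^{1/2}$ via the standard bounded-differences / Gaussian concentration bound for suprema of linear processes. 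One then integrates out the $X^k_i$'s, using \eqref{eq6} and the log-concavity \eqref{eq4} together with the one-dimensional bound (Theorem~\ref{jeden}, via Lemma~\ref{porsum} and the norms $\robnorm{\cdot}{d}$) to see that $\lv\sup_{t\in T}(\sum_i t_i^2\prod_k (X^k_i)^2)^{1/2}\rv_p$ and the contribution of the ``$+1$'' terms are both dominated by $C(d)(\E\sup_{t\in T}|\sum_i t_iX_i|+\sup_{t\in T}\norxp{(t_i)}{p})$. A cleaner route, which I would actually prefer, is to invoke directly a general result of the Latała--Strzelecka type on comparison of weak and strong moments: for a family of linear forms indexed by $T$ built from variables satisfying \eqref{mc}, one has $\lv\sup_{t\in T}|\sum_i t_iX_i|\rv_p\le C(d)(\E\sup_{t\in T}|\sum_i t_iX_i|+\sup_{t\in T}\lv\sum_i t_iX_i\rv_p)$, and then \eqref{plan:mom} is immediate from Theorem~\ref{jeden}.

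The main obstacle is the passage from the mean to the $p$-th moment with the \emph{right} fluctuation term, i.e. showing that the ``diameter'' term that appears in any chaining/concentration argument can be replaced by $\sup_{t\in T}\norxp{(t_i)}{p}$ rather than by a crude $\ell^2$-diameter times $\sqrt p$; this is exactly where one must use that the underlying variables only satisfy \eqref{mc} and not a dimension-free concentration inequality, so the clean Gaussian statement quoted at the end of the excerpt does not apply verbatim and must be substituted by the Latała--Strzelecka comparison of weak and strong moments. A secondary technical point is handling the additive ``$+1$'' terms coming from Lemma~\ref{5}: these produce a deterministic linear functional $t\mapsto\sum_i t_i$ over a bounded index set, whose supremum over $T$ is easily bounded by $\sup_{t\in T}\norxp{(t_i)}{p}$ (take $x_i=\1_{\{i\in I\}}$ with $|I|\le\lfloor p\rfloor$ as in the proof of Lemma~\ref{porsum}), so this part is routine. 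Once \eqref{plan:mom} is in hand, Chebyshev gives
\[
\Pro\Big(\sup_{t\in T}\Big|\sum_i t_iX_i\Big|\ge e\,C(d)\big(\E\sup_{t\in T}|\textstyle\sum_i t_iX_i|+\sup_{t\in T}\norxp{(t_i)}{p}\big)\Big)\le e^{-p},
\]
and absorbing the factor $e$ and replacing $\norxp{\cdot}{p}$ by $\norx{\cdot}$ (legitimate since $\norxp{\cdot}{p}\ge\norx{\cdot}$ for $p\ge1$) yields the claimed statement.
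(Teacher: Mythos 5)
Your preferred route is exactly the paper's proof: Corollary \ref{10} is deduced in one line from the Lata{\l}a--Strzelecka comparison of weak and strong moments (Theorem \ref{9}), the one-dimensional moment estimate \eqref{lepwzor}, and Chebyshev's inequality, so the longer replacement/symmetrization argument you sketch first is not needed. Note only that in the paper's notation $\norx{(t_i)}$ already denotes $\lv (t_i) \rv_{X,p}$ with the same $p$ as in the tail bound, so your final step of passing from $\norxp{(t_i)}{p}$ to $\norx{(t_i)}$ is vacuous and the worry about the direction of that comparison does not arise.
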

\begin{proof}
It is a simple consequence of Theorem \ref{9}, \eqref{lepwzor} and Chebyshev's inequality.
\end{proof}


\begin{lem}\label{11}
Let $T=\bigcup_{k=1}^m T_k$, $m\geq 8$. Then 
$$\E \sup_{t \in T} \sum_i t_i X_i \leq C(d) \left(\max_{k \leq m} \E \sup_{t \in T_k} \sum_i t_i X_i + \sup_{s,t \in T }\norxp{(t_i-s_i)_i}{\ln(m)}  \right).$$

\end{lem}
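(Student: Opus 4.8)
The plan is to imitate the classical Gaussian decomposition lemma (Lemma~3 of \cite{Latgaus}), with Borell's concentration inequality replaced by the tail bound of Corollary~\ref{10} and the moment-growth estimate of Lemma~\ref{gwiazda}. First I would normalize: since $\E\sum_i t_i X_i=0$ for every fixed $t$, and since both the decomposition $T=\bigcup_{k}T_k$ and the quantity $D:=\sup_{s,t\in T}\norxp{(t_i-s_i)_i}{\ln m}$ are unchanged when $T$ is translated by a fixed vector, I may assume $0\in T$ (translate by some $t^{(0)}\in T$) and that each $T_k$ is nonempty. Then $\norxp{(t_i)_i}{p}\le D$ for every $t\in T$ and every $p\le\ln m$, and by Lemma~\ref{gwiazda} also $\sup_{t\in T}\norxp{(t_i)_i}{u\ln m}\le C(d)u^{d}D$ for all $u\ge1$.

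The core step is to produce a tail bound for $Z:=\sup_{t\in T}\big|\sum_i t_i X_i\big|=\max_{k\le m}\sup_{t\in T_k}\big|\sum_i t_i X_i\big|$ strong enough to beat a union bound over the $m$ pieces. Put $E:=\max_{k\le m}\E\sup_{t\in T_k}\big|\sum_i t_i X_i\big|$. Applying Corollary~\ref{10} to each $T_k$ with exponent $u\ln m$ in place of $p$, bounding the fluctuation term by $E$ and $\sup_{t\in T_k}\norxp{(t_i)_i}{u\ln m}\le\sup_{t\in T}\norxp{(t_i)_i}{u\ln m}\le C(d)u^{d}D$, one gets $\Pro\big(\sup_{t\in T_k}|\sum_i t_i X_i|\ge C(d)(E+u^{d}D)\big)\le e^{-u\ln m}=m^{-u}$ for every $u\ge1$; a union over $k$ yields $\Pro(Z\ge C(d)(E+u^{d}D))\le m^{1-u}$. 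Integrating this tail from the level corresponding to $u=1$, and using $m\ge8$ (so $m^{1-u}\le e^{1-u}$ for $u\ge1$ and the resulting $d$-dependent integral converges), gives $\E Z\le C(d)(E+D)$, hence in particular $\E\sup_{t\in T}\sum_i t_i X_i\le\E Z\le C(d)(E+D)$.

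It then remains to replace $E$ by $\max_k\E\sup_{t\in T_k}\sum_i t_i X_i$. For this I fix $t_k\in T_k$ and split $\sum_i t_i X_i=\sum_i (t_k)_i X_i+\sum_i (t_i-(t_k)_i)X_i$. Since $0\in T_k-t_k$, both $\sup_{t\in T_k}\sum_i (t_i-(t_k)_i)X_i$ and $\sup_{t\in T_k}\sum_i ((t_k)_i-t_i)X_i$ are nonnegative, and replacing $X_i$ by $-X_i$ shows that both have expectation $\E\sup_{t\in T_k}\sum_i t_i X_i$, so $\E\sup_{t\in T_k}|\sum_i (t_i-(t_k)_i)X_i|\le2\E\sup_{t\in T_k}\sum_i t_i X_i$. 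On the other hand $\E|\sum_i (t_k)_i X_i|\le\lv\sum_i (t_k)_i X_i\rv_{\ln m}\le C(d)\norxp{(t_k)_i}{\ln m}\le C(d)D$ by Theorem~\ref{jeden} and $t_k\in T$. Adding, $E\le2\max_k\E\sup_{t\in T_k}\sum_i t_i X_i+C(d)D$, which combined with the previous paragraph gives the asserted inequality.

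The step I expect to be the main obstacle is the middle one. Corollary~\ref{10} provides only a single-scale tail estimate whose fluctuation term is $\E\sup_{t\in T_k}|\cdot|$ rather than a genuine ``diameter'', so the real work is to (i) feed it the rescaled exponent $u\ln m$ and invoke Lemma~\ref{gwiazda} so as to turn it into an honest tail that survives the factor $m$ lost in the union bound, and (ii) afterwards trade the a~priori oversized quantity $E$ for $\max_k\E\sup_{t\in T_k}\sum_i t_i X_i$ via the centering/symmetrization of the third paragraph; the translation reduction and the tail integration are then routine.
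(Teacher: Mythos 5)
Your proposal is correct and follows essentially the same route as the paper: translate so the process is centered, apply Corollary \ref{10} at level $u\ln m$ together with Lemma \ref{gwiazda} to get a union-boundable tail $m^{1-u}$, integrate, and then remove the absolute values via the symmetry of the $X_i$'s and the bound $\E\left|\sum_i (t_k)_i X_i\right|\leq C(d)\norxp{(t_k)_i}{\ln m}$. The only (cosmetic) difference is that you translate the set $T$ at the outset, whereas the paper keeps $T$ fixed and works with the increments $\sum_i(t_i-s_i)X_i$ throughout.
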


\begin{proof}
We choose any $s\in T$. Since $\E X_i=0$ we have
$$\E \sup_{t \in T} \sum_i t_i X_i= \E \max_{k\leq m} \sup_{t \in T_k} \sum_i (t_i-s_i) X_i \leq \E \max_{k \leq m} \sup_{t \in T_k} \left|\sum_i (t_i-s_i)X_i \right|.$$
Corollary \ref{10} and the union bound yield for $u \geq 1$,
\begin{align*}
&\Pro \left(\max_{k\leq m} \sup_{t \in T_k} \left| \sum_i (t_i-s_i)X_i \right|\geq C(d) \left[ \max_{k} \E \sup_{t\in T_k} \left|\sum_i (t_i-s_i)X_i \right| +\norxp{(t_i-s_i)_i}{u\ln(m)} \right] \right) \\
&\leq m e^{-u \ln (m)}\leq 4^{1-u}.
\end{align*}

 Lemma \ref{gwiazda} implies $\norxp{(t_i-s_i)_i}{u\ln(m)} \leq C(d) u^d \norxp{(t_i-s_i)_i}{\ln(m)}$. Hence

$$\Pro \left( \sup_{t \in T} \left| \sum_i (t_i-s_i)X_i \right|\geq  C(d) \left[ \max_{k} \E \sup_{t\in T_k} \left|\sum_i (t_i-s_i)X_i \right| + u^d \sup_{s,t \in T} \norxp{(t_i-s_i)_i}{\ln(m)} \right] \right)\leq 4^{1-u}. $$
Integration by parts gives
$$\E \sup_{t \in T} \left|\sum_i (t_i-s_i)X_i \right| \leq C(d) \left(\max_{k} \E \sup_{t\in T_k} \left|\sum_i (t_i-s_i)X_i \right| + \sup_{s,t \in T}  \norxp{(t_i-s_i)_i}{\ln(m)} \right).$$
So to finish the proof it is enough to show that 
\begin{equation}
 \E \sup_{t\in T_k} \left|\sum_i (t_i-s_i)X_i \right| \leq C(d) \left(\E \sup_{t \in T_k} \sum_i t_i X_i+ \sup_{s,t \in T}\norxp{(t_i-s_i)_i}{\ln(m)}  \right). \label{polo2}
\end{equation}  
Let $z \in T_k$. We have
\begin{align}
&\E \sup_{t\in T_k} \left|\sum_i (t_i-s_i)X_i \right| \leq \E \sup_{t\in T_k} \left|\sum_i (t_i-z_i)X_i \right|+ \E  \left|\sum_i (z_i-s_i)X_i \right| \nonumber \\
&\leq \E \sup_{t\in T_k} \left|\sum_i (t_i-z_i)X_i \right|+\lv \sum_i (z_i-s_i) X_i \rv_2\leq \E \sup_{t\in T_k} \left|\sum_i (t_i-z_i)X_i \right|+C(d)\norxp{(t_i-s_i)_i}{\ln(m)}. \label{polo1}
\end{align}
The last inequality is true since (we recall $\ln(m)\geq \ln(8) >2$)
$$\lv \sum_i (z_i-s_i) X_i \rv_2 \leq \lv \sum_i (z_i-s_i) X_i \rv_{\ln(m)} \leq C(d) \norxp{(t_i-s_i)_i}{\ln(m)}.$$
Let us also notice that
\begin{align*}
 &\E \sup_{t\in T_k} \left|\sum_i (t_i-z_i)X_i \right|=\E \max \left( \left(\sup_{t\in T_k} \sum_i (t_i-z_i)X_i\right)_{+}, \left(\sup_{t\in T_k} \sum_i (t_i-z_i)X_i\right)_{-} \right) \\
&\leq \E  \left(\sup_{t\in T_k} \sum_i (t_i-z_i)X_i\right)_{+} + \E \left(\sup_{t\in T_k} \sum_i (t_i-z_i)X_i\right)_{-} \\
&=2\E \sup_{t\in T_k} \left( \sum_i (t_i-z_i)X_i \right)_{+}=2\E \sup_{t\in T_k}  \sum_i (t_i-z_i)X_i, 
\end{align*}
where in the second equality we used that $X_i$ are symmetric and in the last one that $z \in T_k$. The above together with \eqref{polo1} imply \eqref{polo2}.
\end{proof}

The next Theorem (together with Lemma \ref{11}) allows us to pass from the bounds on expectations of suprema of
Gaussian processes developed in \cite{AdLat} (Theorem \ref{A.13}) to empirical processes involving general random variables with bounded fourth
moments (in particular all random variables from the $\Sd(d)$ class). 

\begin{twr} \label{17}
Let $p\geq 1$ and $T\subset B^n_2+\sqrt{p}B_1$. There is a decomposition $T=\bigcup_{l=1}^N T_l$, $N\leq e^{Cp}$ such that for every $l \leq N$ and $z \in \R^n$ the following holds:
\begin{equation}
\E \sup_{x \in T_l} \sum_{i,j} \aaa x_i g_j z_j \leq C \left(\sum_{i,j} \aaa^2 z_j^4 \right)^{\frac{1}{4}} \left(\sum_{i,j} \aaa^2 \right)^{\frac{1}{4}}. \label{ww.1}
\end{equation}

\end{twr}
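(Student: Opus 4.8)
The plan is to cut each $x\in T$ into a Euclidean part and a sparse $\ell^1$ part, to dispose of the Euclidean part by a one-line second-moment computation, and to reduce the sparse part to a combinatorial decomposition of exactly the type used in the proof of Lemma \ref{pozbiorach} (going back to \cite{Latloch}). For $x\in T$ fix $y,w$ with $x=y+w$, $\lv y\rv_2\le 1$, $\lv w\rv_1\le\sqrt p$, and split $w=w\1_{\{|w_i|\le p^{-1/2}\}}+w\1_{\{|w_i|>p^{-1/2}\}}=:u+v$. Then $\lv u\rv_2^2\le p^{-1/2}\lv w\rv_1\le 1$, so $y+u\in 2B^n_2$, while $v$ has at most $p$ nonzero coordinates and $\lv v\rv_1\le\sqrt p$. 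Hence $T\subset 2B^n_2+\mathcal S$ with $\mathcal S=\{v:\ |\{i:v_i\ne 0\}|\le p,\ \lv v\rv_1\le\sqrt p\}$. For fixed $z$ the map $x\mapsto\sum_{i,j}a_{i,j}x_ig_jz_j$ is a centred Gaussian process, so the supremum over a Minkowski sum is the sum of the two suprema; thus it suffices to decompose $\mathcal S=\bigcup_{l\le N}\mathcal S_l$ with $N\le e^{Cp}$, put $T_l=2B^n_2+\mathcal S_l$, and add the ($z$-free) bound over $2B^n_2$ to the bound over $\mathcal S_l$.

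For the Euclidean part, with $D_z=\mathrm{diag}(z_j)$ and $A=(a_{i,j})$, one has $\E\sup_{x\in 2B^n_2}\sum_{i,j}a_{i,j}x_ig_jz_j=2\E\lv AD_zg\rv_2\le 2\big(\sum_{i,j}a_{i,j}^2z_j^2\big)^{1/2}$, and Cauchy--Schwarz gives $\sum_{i,j}a_{i,j}^2z_j^2=\sum_j z_j^2\sum_i a_{i,j}^2\le\big(\sum_{i,j}a_{i,j}^2z_j^4\big)^{1/2}\big(\sum_{i,j}a_{i,j}^2\big)^{1/2}$, so this term is $\le 2\big(\sum_{i,j}a_{i,j}^2z_j^4\big)^{1/4}\big(\sum_{i,j}a_{i,j}^2\big)^{1/4}$, exactly the right-hand side of \eqref{ww.1}. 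Before decomposing $\mathcal S$, relabel the first index so that $i\mapsto c_i^2:=\sum_j a_{i,j}^2$ is nonincreasing; then $c_i^2\le m^{-1}\sum_{i'}c_{i'}^2$ for $i>m$, which combined with the Cauchy--Schwarz bound $\sigma_i^2:=\sum_j a_{i,j}^2z_j^2\le\big(\sum_{i',j}a_{i',j}^2z_j^4\big)^{1/2}c_i$ gives $\sigma_i\le m^{-1/4}\big(\sum_{i,j}a_{i,j}^2z_j^4\big)^{1/4}\big(\sum_{i,j}a_{i,j}^2\big)^{1/4}$ for every $z$ and $i>m$; thus the Gaussian weight of a coordinate decays with its rank, uniformly in $z$, while $\max_i\sigma_i$ never exceeds the right-hand side of \eqref{ww.1}.

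The decomposition of $\mathcal S$ is the core. Writing $v\in\mathcal S$ as $v=v\1_{[2p]}+\sum_{l\ge 1}v\1_{(2^lp,2^{l+1}p]}$ and refining each coordinate dyadically by magnitude, one decomposes $\mathcal S$ into $N\le e^{Cp}$ pieces $\mathcal S_l$, each of which is either (i) a set of vectors of bounded magnitude on every dyadic block, or (ii) a bounded-$\ell^1$ set supported on a fixed admissible pattern from a family $\mathcal J$ as in \eqref{trojkat} (with $|\mathcal J|\le e^{Cp}$ by \eqref{mocJ}), each active coordinate ranging over a controlled interval. On a piece of type (ii) the process $\sum_i v_i\xi_i$, $\xi_i=\sum_j a_{i,j}g_jz_j$, is estimated by a union bound over the at most $p/l^3$ active coordinates of block $l$, whose weights $\sigma_i$ are controlled by the decay above, and the resulting series telescopes to a constant times $\big(\sum_{i,j}a_{i,j}^2z_j^4\big)^{1/4}\big(\sum_{i,j}a_{i,j}^2\big)^{1/4}$; pieces of type (i) are treated by the Chevet/Hölder argument of \eqref{kawal}--\eqref{oszacowanieU}, and, where a sharper Gaussian-supremum estimate is needed, by Theorem \ref{A.13} of \cite{AdLat}. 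Adding the two contributions on each $T_l=2B^n_2+\mathcal S_l$ gives \eqref{ww.1}.

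The main obstacle is this last step: choosing the dyadic scales --- in coordinate position and in coordinate magnitude simultaneously --- so that the number of admissible patterns stays below $e^{Cp}$ while the per-block contributions, weighted by the decay of $\sigma_i$ with the rank established above, still sum to an absolute constant times the right-hand side of \eqref{ww.1}. The genuinely delicate case is the ``top'' $\sim p$ coordinates, where $\sigma_i$ does not decay: each such coordinate must be confined to an interval of bounded length and $\lv v\1_{[2p]}\rv_1$ must be constrained on each piece, and one must check that the corresponding refinement --- a $\le p$-element support inside $[2p]$ together with the magnitude intervals --- costs only $e^{Cp}$, which follows from $\binom{n}{k}\le(en/k)^k$ as in \eqref{mocJ}. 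This is precisely the bookkeeping imported from \cite{Latloch} and \cite{AdLat}.
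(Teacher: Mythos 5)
Your reduction to $T\subset 2B^n_2+\mathcal S$ (with $\mathcal S$ the set of vectors having at most $p$ nonzero entries and $\ell^1$-norm at most $\sqrt p$) and your treatment of the Euclidean part are fine, but the heart of the theorem --- producing only $e^{Cp}$ pieces whose oscillation is small \emph{uniformly in $z$} --- is exactly the step you leave unproved, and the sketch you give for it does not work. First, there is no reason the support of a vector $v\in\mathcal S$ should belong to the family $\J$ of \eqref{trojkat}: unlike the set $V$ in \eqref{V}, where the condition $\mx(v_i)>l^3$ on the $l$-th block together with $\sum_i\mx(v_i)\le p$ forces at most $p/l^3$ active coordinates per block, the set $\mathcal S$ carries no per-block constraint, so the admissible support patterns are all $p$-subsets of $[n]$ and their number is not $e^{Cp}$. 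Second, and more fundamentally, even with the support fixed the magnitudes must be discretized: to make $\E\sup$ over a piece at most a constant times the right-hand side of \eqref{ww.1} you need the oscillation $\sum_i\delta_i\sigma_i$ (with $\delta_i$ the length of the interval allotted to coordinate $i$) bounded by $\left(\sum_{i,j}a_{i,j}^2z_j^4\right)^{1/4}\left(\sum_{i,j}a_{i,j}^2\right)^{1/4}$; since your bound on $\sigma_i$ decays only like $i^{-1/4}$ and each $v_i$ ranges over an interval of length $2\sqrt p$, a product-of-intervals discretization of the top $\sim p$ coordinates costs at least $e^{cp\log p}$ pieces, not $e^{Cp}$. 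The extra $\log p$ is fatal downstream: in Fact \ref{19} the decomposition is fed into Lemma \ref{11} with $\ln N$ in place of $p$, and Lemma \ref{gwiazda} converts $\ln N=Cp\log p$ into a loss of order $(\log p)^d$, destroying the $p$-free constants of Theorem \ref{1}. (A sanity check that the pieces really must be this small: if $a_{1,j}=b_j$ is the only nonzero row and $z\equiv 1$, any piece containing both $0$ and $\sqrt p\,e_1$ already has $\E\sup\ge c\sqrt p\lv b\rv_2$, while the right-hand side of \eqref{ww.1} is $C\lv b\rv_2$.)

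The paper resolves this with one idea you are missing plus one external input that cannot be reproduced by counting. The idea is to dominate the $z$-dependent $L_2$-metric $\al_z(x)=\left(\sum_jz_j^2\left(\sum_ia_{i,j}x_i\right)^2\right)^{1/2}$, via Cauchy--Schwarz, by a single $z$-free norm $\beta(x)=\left(\sum_j\left(\sum_ia_{i,j}x_i\right)^4/\sum_ia_{i,j}^2\right)^{1/4}$ times $\left(\sum_{i,j}a_{i,j}^2z_j^4\right)^{1/4}$, so that one decomposition serves all $z$ simultaneously. The external input is Corollary \ref{A.12} (Adamczak--Lata{\l}a), which for $T\subset B^n_2+\sqrt pB^n_1$ gives $N(T,\beta,Ct\,\E\beta(\mathcal E))\le\exp(t^{-2}+\sqrt p\,t^{-1})$; taking $t=p^{-1/2}$ and using $\E\beta(\mathcal E)\le C\left(\sum_{i,j}a_{i,j}^2\right)^{1/4}$ yields $e^{Cp}$ pieces of $\beta$-diameter $Cp^{-1/2}\left(\sum_{i,j}a_{i,j}^2\right)^{1/4}$, after which Lemma \ref{A.13} finishes the proof. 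That entropy estimate rests on Sudakov-type minoration for exponential vectors, not on $\binom nk\le(en/k)^k$; it is precisely what closes the gap between $e^{Cp\log p}$ and $e^{Cp}$ that your bookkeeping leaves open.
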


\begin{proof}
Let $\al_z(x)=\left( \sum_j z_j^2 \left(\sum_i \aaa x_i \right)^2 \right)^{\frac{1}{2}}$. By the Cauchy-Schwarz inequality,
\begin{equation}
\al_z(x)\leq \left( \sum_{i,j} z_j^4 \aaa^2 \right)^{\frac{1}{4}}\beta(x), \label{17.1}\end{equation}
where 
$$\beta(x)=\left(\sum_j \frac{\left(\sum_i \aaa x_i \right)^4}{\sum_i \aaa^2} \right)^{\frac{1}{4}}.$$
Let $\wy=(\wy_j)_j$, where $\wy_j$ are i.i.d symmetric exponential r.v's with the density $e^{-|x|}/2 $. We have

\begin{equation}
\E\beta(\wy)\leq \left(\E\beta(\wy)^4\right)^{\frac{1}{4}} \leq C \left(\sum_{i,j} \aaa^2 \right)^{\frac{1}{4}}. \label{loc1}
\end{equation}

Using Corollary \ref{A.12} with, $a=\sqrt{p}$, $t=\frac{1}{\sqrt{p}}$ and $\rho_{\al}(x,y)=\beta(x-y)$ we can decompose $T$ into $\bigcup_{l=1}^N T_l$ in such a way that $N\leq \exp(Cp)$ and
\begin{align}
\forall_{l \leq N} \sup_{x, \widetilde{x} \in T_l} \beta(x-\widetilde{x}) \leq \frac{C}{\sqrt{p}} \left( \sum_{i,j} \aaa^2 \right)^{\frac{1}{4}}. \label{gwiazdaa}
\end{align}
By Lemma \ref{A.13} we obtain
\begin{align*}
&\frac{1}{C} \E \sup_{x \in T_l} \sum_{i,j} \aaa x_i g_j z_j \leq \sqrt{\sum_{i,j} \aaa^2 z^2_j} + \sqrt{p} \sup_{x,\widetilde{x} \in T_l} \sqrt{\sum_j z_j^2 \left(\sum_i \aaa (x_i-\widetilde{x}_i) \right)^2} \\
&\leq \sqrt{\sum_{i,j} \aaa^2 z^2_j} + \sqrt{p} \sup_{x,\widetilde{x} \in T_l} \left(\sum_{i,j} \aaa^2 z^4_j \right)^{\frac{1}{4}}\beta(x-\widetilde{x}) \leq C\left(\sum_{i,j} \aaa^2 z^4_j\right)^{\frac{1}{4}}\left(\sum_{i,j} \aaa^2 \right)^{\frac{1}{4}},
\end{align*}
where in the last inequality we used the Cauchy-Schwarz inequality and \eqref{gwiazdaa}.

\end{proof}

\begin{fak}\label{19}
For any symmetric set $T\subset\sqrt{p}B^n_2+pB^n_1$,  we have
$$S(T):=\E \sup_{x \in T} \sum_{i,j} \aaa x_i Y_j \leq C(d) \left(\sup_{x \in T} \nory{\left(\sum_i \aaa x_i \right)_j}+\nory{\left(\sqrt{\sum_i \aaa^2}\right)_j}\right).$$

\end{fak}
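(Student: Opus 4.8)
The plan is to view $S(T)$ as an expected supremum of a linear process in the variables $(Y_j)$ and to combine the Gaussian decomposition of Theorem~\ref{17} with the chaining Lemma~\ref{11}. Set $c_j(x)=\sum_i a_{i,j}x_i$, $A_j=\sqrt{\sum_i a_{i,j}^2}$, and $\tilde T=\{(c_j(x))_j:x\in T\}$; this is a symmetric subset of $\R^n$ and $S(T)=\E\sup_{v\in\tilde T}\sum_j v_jY_j$. I will repeatedly use the trivial bound $\lv v\rv_2\le\nory{(v_j)_j}$ (take $(v_j/\lv v\rv_2)_j$ in \eqref{normy}), so that $\sqrt{\sum_{i,j}a_{i,j}^2}=\lv(A_j)_j\rv_2\le\nory{(A_j)_j}$, together with the fact that $\lv Y_j\rv_4\le2^d\lv Y_j\rv_2=C(d)$ since $Y_j\in\Sd(d)$.

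Since $T/\sqrt p\subset B^n_2+\sqrt pB^n_1$, I would apply Theorem~\ref{17} to $T/\sqrt p$, obtaining $T=\bigcup_{l=1}^N T_l$ with $N\le e^{Cp}$; rescaling the estimate \eqref{gwiazdaa} from its proof, each piece satisfies $\sup_{x,\widetilde x\in T_l}\big(\sum_j c_j(x-\widetilde x)^4/A_j^2\big)^{1/4}\le C\big(\sum_{i,j}a_{i,j}^2\big)^{1/4}$, i.e. the pieces $\tilde T_l=\{(c_j(x))_j:x\in T_l\}$ have small $\beta$-diameter. Applying Lemma~\ref{11} to the process $v\mapsto\sum_j v_jY_j$ over $\tilde T=\bigcup_l\tilde T_l$ (with $m=N$, which we may take $\ge 8$) bounds $S(T)$ by $C(d)$ times
$$\max_{l\le N}\E\sup_{v\in\tilde T_l}\sum_j v_jY_j\;+\;\sup_{v,w\in\tilde T}\lv(v_j-w_j)_j\rv_{Y,\ln N}.$$
The second term is harmless: since $\ln N\le Cp$, Lemma~\ref{gwiazda} gives $\lv(v_j-w_j)_j\rv_{Y,\ln N}\le C(d)\lv(v_j-w_j)_j\rv_{Y,p}\le 2C(d)\sup_{x\in T}\nory{(c_j(x))_j}$, using that $\tilde T$ is symmetric. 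Hence the whole problem reduces to bounding $\E\sup_{v\in\tilde T_l}\sum_j v_jY_j$ for one piece by $C(d)\big(\sup_{x\in T}\nory{(c_j(x))_j}+\nory{(A_j)_j}\big)$.

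For a fixed piece I would symmetrize, $Y_j=\eps_j|Y_j|$ with an independent Bernoulli sequence, pass to a Gaussian sequence via the contraction principle ($\E\sup\sum_j\eps_jt_j\le\sqrt{\pi/2}\,\E\sup\sum_j g_jt_j$) conditionally on $(|Y_j|)_j$, and apply the Gaussian chaining estimate of \cite{AdLat} (Lemma~\ref{A.13}) to $\tilde T_l$ with weights $z_j=|Y_j|$. Choosing a base point $v_0\in\tilde T_l$, the base-point term $\E|\sum_j (v_0)_jY_j|$ is $\le\lv v_0\rv_2/e\le\sup_{x\in T}\nory{(c_j(x))_j}$, and the increment term of the Gaussian estimate is handled by Cauchy--Schwarz together with the $\beta$-diameter bound and $\lv Y_j\rv_4=C(d)$. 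The one genuinely hard point --- it is the ``main difficulty'' the introduction warns about --- is the diagonal (weak--variance) term of the Gaussian estimate: because $T$ only lies in the dilated body $\sqrt pB^n_2+pB^n_1$, a black-box application of Theorem~\ref{17} produces here a term of order $\sqrt p\sqrt{\sum_{i,j}a_{i,j}^2}$, which for light-tailed $Y$ can exceed $\nory{(A_j)_j}$ and so cannot be absorbed into the right-hand side. I expect the resolution to require splitting $T$ additively along its Euclidean and $\ell^1$ pieces (each $x=x^{(2)}+x^{(1)}$ with $\lv x^{(2)}\rv_2\le\sqrt p$, $\lv x^{(1)}\rv_1\le p$) and estimating the Euclidean summand by a Gaussian bound \emph{localized to the individual $\tilde T_l$}, so that the effective Euclidean radius is $\sup_{v\in\tilde T_l}\lv v\rv_2$ --- which, by centering at $v_0$ and the $\beta$-diameter bound, is $\le C\big(\sup_{x\in T}\nory{(c_j(x))_j}+\nory{(A_j)_j}\big)$ --- rather than the crude $\sqrt p\lv(A_j)_j\rv_2$; the $\ell^1$ summand should then enter only through the already-controlled diameter term. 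Essentially all of the work of the proof lies in making this last estimate precise.
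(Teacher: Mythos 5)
Your scaffolding is the same as the paper's (Theorem \ref{17} to decompose, Lemma \ref{11} to chain over the pieces, symmetrization and contraction to pass to Gaussians, Lemma \ref{A.13} on each piece, with the diameter term absorbed via Lemma \ref{gwiazda}), and you correctly isolate the crux: the Gaussian estimate produces a term of order $\sqrt{p}\sqrt{\sum_{i,j}\aaa^2}$, which in general is not dominated by $\nory{\left(\sqrt{\sum_i \aaa^2}\right)_j}$. But at exactly that point your proof stops being a proof. Your proposed fix --- splitting each $x\in T$ as $x^{(2)}+x^{(1)}$ with $\lv x^{(2)}\rv_2\le\sqrt p$, $\lv x^{(1)}\rv_1\le p$, and arguing that the ``effective Euclidean radius'' of $\tilde T_l$ is small --- is both different from what the paper does and not obviously closable: a bound on $\sup_{v\in\tilde T_l}\lv v\rv_2$ does not by itself control $\E\sup_{v\in\tilde T_l}\sum_j v_jY_j$ (that expected supremum depends on the metric entropy of $\tilde T_l$, not just its radius), and ``the $\ell^1$ summand should then enter only through the diameter term'' is an assertion, not an argument. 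You say yourself that essentially all the work lies in making this precise; that work is the missing content.

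The paper's resolution is simpler and goes in a different direction: it splits the \emph{index set of $j$}, not the set $T$. Write $S(T)\le S_1(T)+S_2(T)$ with $S_1$ the sum over $j\le p$ and $S_2$ the sum over $j>p$ (after assuming, without loss of generality as in \eqref{monot}, that $j\mapsto\sum_i\aaa^2$ is nonincreasing). For $S_1$ the $y$-support has cardinality at most $p$, so Fact \ref{prockrt} followed by Theorem \ref{jeden} gives $S_1(T)\le C(d)\sup_{x\in T}\nory{\left(\sum_i\aaa x_i\right)_j}$ with no Gaussian machinery at all. For $S_2$ one runs exactly your chaining-plus-Gaussian argument, ending with $C(d)\sqrt p\sqrt{\sum_i\sum_{j>p}\aaa^2}$ --- and now the restriction $j>p$ together with Lemma \ref{rad} and the monotonicity gives
$$\frac{\sqrt p}{2}\sqrt{\sum_i\sum_{j>p}\aaa^2}\le\nory{\left(\sqrt{\sum_i\aaa^2}\right)_j},$$
so the problematic term is absorbed after all. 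In short: the obstruction you flagged is real, but it is removed by discarding the first $p$ coordinates of $Y$ (where the weak-variance term is genuinely too big and must be handled by the finite-support moment bound) rather than by refining the decomposition of $T$. Without this step, or a completed version of your alternative, the proof is incomplete.
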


\begin{proof}
Obviously,
\begin{equation}
S(T)\leq \E \sup_{x \in T}  \sum_{i}\sum_{j\leq p} \aaa x_i Y_j+\E \sup_{x \in T}  \sum_{i}\sum_{j>p} \aaa x_i Y_j=:S_1(T)+S_2(T). \label{19.1}
\end{equation}
By Fact \ref{prockrt} and Theorem \ref{jeden} we have
\begin{equation}
S_1(T)\leq C \sup_{x \in T} \lv \sum_{i,j} \aaa x_i Y_j \rv_p  \leq C(d) \sup_{x \in T} \nory{\left(\sum_i \aaa x_i \right)_j}.
\end{equation}

Now we bound $S_2(T)$. By Theorem \ref{17} We may decompose $T$ into $T=\bigcup_{l=1}^N T_l$ in such a way that $N\leq \exp(Cp)$ and \eqref{ww.1} holds for $T_l/\sqrt{p}$ instead of $T_l$ and $(\aaa)_{i\geq 1,j>p}$ instead of $(\aaa)_{i,j}$. Using Lemmas \ref{11} and \ref{gwiazda} we get

\begin{align}
S_2(T)\leq C(d)  \left(  \max_{l \leq N} \E \sup_{x \in T_l} \sum_{i}\sum_{j>p} \aaa x_i Y_j +\sup_{x \in T} \nory{\left(\sum_i \aaa x_i \right)_j} \right) \label{19.3}.
\end{align}

By the symmetry of $Y_j$'s, Jensen's inequality and the contraction principle
\begin{align*}
   \E \sup_{x \in T_l} \sum_{i}\sum_{j>p} \aaa x_i Y_j=    \E \sup_{x \in T_l} \sum_{i}\sum_{j>p} \aaa x_i \eps_j |Y_j| \leq \sqrt{\frac{2p}{\pi}}   \E \sup_{x \in T_l/ \sqrt{p}}  \sum_{i}\sum_{j>p} \aaa x_i g_j |Y_j|.
\end{align*}
 Theorem 
\ref{17} states that last term in the above formula does not exceed
\begin{align}
C(d) \sqrt{p}\ \E\left(  \sum_{i}\sum_{j>p} \aaa^2 Y^4_j \right)^{\frac{1}{4}}\left(  \sum_{i}\sum_{j>p} \aaa^2 \right)^{\frac{1}{4}}  &\leq C(d) \sqrt{p} \left(\sum_i \sum_{j \geq p} \aaa^2 \E Y_j^4 \right)^\frac{1}{4}  \nonumber \\
&\leq C(d) \sqrt{p} \sqrt{ \sum_{i}\sum_{j>p} \aaa^2}  \label{19.4}.
\end{align}

In the last inequality we used  $\lv Y_j \rv_4 \leq 2^d \lv Y_j \rv_2=2^d/e$. From Lemma \ref{rad} and \eqref{monot}
\begin{align}
\frac{\sqrt{p}}{2} \sqrt{ \sum_{i}\sum_{j>p} \aaa^2} \leq  \sup \left\{\sum_j \sqrt{\sum_i \aaa^2}t_j \ \Big{|} \sum_j t^2_j \leq p,\ \forall_{j\leq n} |t_j|\leq 1 \right\}\leq  \nory{\left(\sqrt{\sum_i \aaa^2} \right)_j}. \label{19.5}
\end{align}

The assertion follows by \eqref{19.1}-\eqref{19.5}.

\end{proof}

\section{Proof of Theorem \ref{1}} \label{sekcja5}

We are ready to prove the main theorem of this paper. We begin with the lower bound.

Repeated application of \eqref{lepwzor} gives
\begin{equation}
\lv \sum_{i,j} \aaa X_i Y_j \rv_p \geq c(d) \norxy{\aaa}. \label{6.1.1}
\end{equation}
Symmetry of $Y_j's$, Jensen's inequality, \eqref{momsum} with $p=1$, normalization $\lv X_i \rv_2=1/e$ and  \eqref{lepwzor} imply
\begin{align}
\lv \sum_{i,j} \aaa X_i Y_j \rv_p &= \lv \sum_j Y_j   \left| \sum_i \aaa X_i \right|\rv_p
\geq \lv \sum_j Y_j \E_X \left|\sum_i \aaa X_i  \right| \rv_p \nonumber \\
&\geq \frac{1}{C(d)} \lv \sum_j Y_j \sqrt{\E_X \left(\sum_i \aaa X_i \right)^2} \rv_p  \geq \frac{1}{C(d)}\nory{\left(\sum_i \aaa^2 \right)_j}. \label{6.1.2}
\end{align}
In the same way we show 
\begin{align}
\lv \sum_{i,j} \aaa X_i Y_j \rv_p \geq \frac{1}{C(d)} \lv \sqrt{\sum_j \aaa^2} \rv_{X,p}. \label{6.1.2.11}
\end{align}
Inequalities \eqref{6.1.1} and \eqref{6.1.2.11}  gives the lower bound in Theorem \ref{1}.

Now we establish the upper bound. To this end we observe that Theorems \ref{jeden} and \ref{9}  and Lemma \ref{porsum}  yield
\begin{align}
\lv \sum_{i,j} \aaa X_i Y_j \rv_p \leq C(d) \left( \E \robnorm{\left( \sum_j \aaa Y_j \right)_i}{d} + \norxy{(\aaa)_{i,j}} \right), \nonumber
 \end{align}
whereas $ \E \robnorm{\left( \sum_j \aaa Y_j \right)_i}{d}$ is bounded by the following Proposition.

\begin{prep}\label{naprawa}
For any $d \geq 1$ we have
\begin{align}
\E \robnorm{\left(\sum_j \aaa Y_j \right)_i}{d} \leq C(d) \left(  \norxy{(\aaa)}+ \nory{\left(\sqrt{\sum_i \aaa^2} \right)_j} \right). \label{teza}
\end{align}
\end{prep}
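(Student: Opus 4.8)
The plan is to argue by induction on $d$, mirroring the structure of Lemma \ref{7.5} but now with the ``coefficients'' $a_i$ replaced by the random linear forms $\sum_j a_{i,j} Y_j$ and the role of the outer variable $X^d_i$ played by summation against a supremum defining $\robnorm{\cdot}{d}$. For the base case $d=1$ we have $\robnorm{(b_i)}{1}=\robnormp{(b_i)}{1}{p}$, which by definition is $\sup\{\sum_i b_i x_i : \sum_i \mx(x_i)\le p\}$; taking $b_i=\sum_j a_{i,j}Y_j$ this is, up to the equivalence of Lemma \ref{porsum}, essentially $\sup\{\lv \sum_{i,j}a_{i,j}x_i Y_j\rv : \dots\}$, and one estimates its expectation by splitting the supremum over $i$ into the ``small support'' part ($|I|\le p$, handled by Lemma \ref{momkrt} / Fact \ref{19}) and the tail part, where Fact \ref{19} already gives exactly the bound $\norxy{(a_{i,j})}+\nory{(\sqrt{\sum_i a_{i,j}^2})_j}$ after recognizing $\sup_{x}\nory{(\sum_i a_{i,j}x_i)_j}\le \norxy{(a_{i,j})}$ via the very definition of $\norxy{\cdot}$.

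For the inductive step, assuming the claim for $1,\dots,d-1$, I would open up the definition \eqref{robocza} of $\robnorm{\cdot}{d}$ and split according to whether each coordinate $x^k_i$ is $\le 1$ or $>1$, exactly as in \eqref{rozbicie}--\eqref{brzydka}: the ``$\le 1$'' part in the first coordinate reduces (after distributing the product $\prod_{k=2}^{d-1}(1+x^k_i)$ and using $U,V\subset T$ symmetry, or more directly by absorbing the bounded factor) to a $\robnorm{\cdot}{d-1}$-type expression to which the induction hypothesis applies; the genuinely new contribution is
$$
\E \sup\left\{ \sum_i \Big(\sum_j a_{i,j}Y_j\Big) \prod_{k=1}^{d}x^k_i \1_{\{x^k_i>1\}} \ \Big|\ \forall_k \sum_i \mx(x^k_i)\le p\right\},
$$
where I have merged the $d-1$ product coordinates with one extra copy; using $T\subset U+V$ with $T,U,V$ from \eqref{T}--\eqref{V} and the symmetry among the coordinates, this splits into the term over $U$ only and the term with $d-1$ coordinates in $T$ and one in $V$ — which are precisely $S_1$ and $S_2$ bounded in \eqref{poU} and \eqref{poV} by $\nory{(\sqrt{\sum_i a_{i,j}^2})_j}$ and $\norxy{(a_{i,j})}$ respectively. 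Here the ``coefficient sequence'' $(\sqrt{\sum_j a_{i,j}^2})_i$ in the hypotheses of that lemma is monotone by the normalization \eqref{monot}, so the lemma applies directly.

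The main obstacle I anticipate is the bookkeeping when passing from $\robnorm{\cdot}{d}$ (which has $d-1$ product factors $x^1_i\prod_{k=2}^d(1+x^k_i)$, i.e. it is ``$d$-linear'' only after the $+1$ shifts) to the clean $d$-fold product $\prod_{k=1}^d x^k_i\1_{\{x^k_i>1\}}$ that the lemma on $S_1,S_2$ is phrased for: one must expand all the $2^{d-1}$ cross terms coming from $\prod(1+x^k_i)$, check that each sub-product with $<d$ genuine factors is controlled by the induction hypothesis (with the same two-norm right-hand side), and verify that the indicator-truncation step $x^k_i\mapsto x^k_i\1_{\{x^k_i>1\}}$ only costs a multiplicative $C(d)$ as in \eqref{l.2}--\eqref{l.4}. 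A secondary point requiring care is that, conditionally on $Y$, the ``coefficients'' $b_i=\sum_j a_{i,j}Y_j$ need not be ordered, so before invoking the combinatorial estimates one either reorders (harmless, since all bounds are rearrangement-invariant) or works with the nonincreasing rearrangement throughout; and one must keep the reduction to monotone $(\sum_j a_{i,j}^2)_i$ from \eqref{monot} consistent across the induction. Once these reductions are in place, the three pieces assemble to give the right-hand side of \eqref{teza}, and combining with the already-noted inequality $\lv\sum_{i,j}a_{i,j}X_iY_j\rv_p\le C(d)(\E\robnorm{(\sum_j a_{i,j}Y_j)_i}{d}+\norxy{(a_{i,j})})$ completes the proof of the upper bound in Theorem \ref{1}.
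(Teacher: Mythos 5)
Your proposal is correct and follows essentially the same route as the paper: expand the product in the definition of $\robnorm{\ \cdot\ }{d}$ so as to isolate a linear part, handled by Fact \ref{19}, from the genuinely multilinear part, handled by the $T\subset U+V$ decomposition via \eqref{poU} and \eqref{poV}. The only organizational difference is that the paper dispenses with your induction on $d$: every sub-product $\prod_{k\in I}x^k_i\1_{\{|x^k_i|\geq 1\}}$ with $\emptyset\neq I\subset[d]$ is upgraded to the full $d$-fold product by duplicating coordinates ($x^k_i=x^{k_0}_i$ for $k\notin I$ and some $k_0\in I$), so all these terms are absorbed into the single quantity $S$ bounded by \eqref{poU}--\eqref{poV}, and no induction hypothesis is required.
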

\begin{proof}

Since the functions $\mx(t)$ are symmetric and $1+x\leq 2+x\1_{\{x \geq 1 \}}$ we have
\begin{align*}
\E \robnorm{\left(\sum_j \aaa Y_j \right)_i}{d}&=\E \sup \left\{ \sum_{i}   x^1_i \prod_{k=2}^{d} (1+x^k_i) \left| \sum_j \aaa Y_j \right| \ \Big{|} \ \forall_{k\leq d} \sum_i \mx(x^k_i)\leq p \right\} \\
&\leq 2^{d-1} \Bigg{[} \E \sup \left\{ \sum_{i}   x^1_i  \left| \sum_j \aaa Y_j \right| \ \Big{|} \  \sum_i \mx(x^1_i)\leq p \right\}\\
&\ \ \ +\sum_{I \subset [d]} \E \sup \left\{ \sum_{i}   \prod_{k\in I} x^k_i \1_{\{|x^k_i| \geq 1\}} \left| \sum_j \aaa Y_j \right| \ \Big{|} \ \forall_{k \in I} \sum_i \mx(x^k_i)\leq p \right\} \Bigg{]}.\\
\end{align*}

Since $\{x \in \R^n \ | \ \sum_i \mx(x_i)\leq p \}\subset \sqrt{p}B^n_2+pB^n_1$ (recall \eqref{ulubionaetykieta}) Lemma \ref{19} implies 
\begin{align*}
&\E \sup \left\{ \sum_{i}   x^1_i  \left| \sum_j \aaa Y_j \right| \ \Big{|} \  \sum_i \mx(x^1_i)\leq p \right\}=\E \sup \left\{ \sum_{i}   x^1_i   \sum_j \aaa Y_j  \ \Big{|} \  \sum_i \mx(x^1_i)\leq p \right\} \\
&\leq C(d) \left( \sup \left\{  \nory{\left(\sum_i \aaa x^1_i\right)_j}   \ \Big{|} \  \sum_i \mx(x^1_i)  \leq p\right\}+\nory{\left(\sqrt{\sum_i \aaa^2}\right)_j}\right) \\
&\leq C(d) \left(   \norxy{(a_i)} + \nory{\left(\sqrt{\sum_i \aaa^2} \right)_j} \right).
\end{align*}
If $I \subset [d]$ then
\begin{align*}
&\E \sup \left\{ \sum_{i}   \prod_{k\in I} x^k_i \1_{\{|x^k_i| \geq 1\}} \left| \sum_j \aaa Y_j \right| \ \Big{|} \ \forall_{k \in I} \sum_i \mx(x^k_i)\leq p \right\}\\
&\leq \E \sup \left\{ \sum_{i}   \prod_{k=1}^d x^k_i \1_{\{|x^k_i| \geq 1\}} \left| \sum_j \aaa Y_j \right| \ \Big{|} \ \forall_{k \in I} \sum_i \mx(x^k_i)\leq p \right\}=:S,
\end{align*}
where in the inequality we choose $x^k_i=x^{k_0}_i$ for any $k \notin I$ and some $k_0 \in I$.

Let  $T,U,V$ be the sets defined in \eqref{T}-\eqref{V}. Then $T \subset U+V$. So we have that
\begin{align}
S&\leq \E \sup_{x^1,\ldots,x^d \in U} \sum_{i} \prod_{k=1}^d x^k_i \left| \sum_j \aaa Y_j \right| +\sum_{\stackrel{I\subset [d]}{I\neq \emptyset}} \E \sup_{x^i \in U \textrm{ for } i\in I }\ \  \sup_{x^i \in V \textrm{ for } i\notin I} \sum_{i} \prod_{k=1}^d x^k_i \left| \sum_j \aaa Y_j \right| \nonumber \\
&\leq  \E \sup_{x^1,\ldots,x^d \in U} \sum_{i} \prod_{k=1}^d x^k_i \left| \sum_j \aaa Y_j \right| +\left(2^d-1\right) \E \sup_{x^1,\ldots,x^{d-1}\in T,x^d\in V } \sum_{i} \prod_{k=1}^d x^k_i \left| \sum_j \aaa Y_j \right|.  \label{przystanek}
\end{align}
The last inequality follows by the symmetry and the inclusions $U,V\subset T$. Observe that \eqref{poU} implies 
$$\E \sup_{x^1,\ldots,x^d \in U} \sum_{i} \prod_{k=1}^d x^k_i \left| \sum_j \aaa Y_j \right| \leq C(d) \nory{\left(\sqrt{\sum_i \aaa^2} \right)_j},  $$
whereas \eqref{poV} implies
$$ \E \sup_{x^1,\ldots,x^{d-1}\in T,x^d\in V } \sum_{i} \prod_{k=1}^d x^k_i \left| \sum_j \aaa Y_j \right| \leq C(d)   \norxy{(a_i)}.$$

\end{proof}

\appendix

\section{Appendix}
In this section we gather results from previous work used in the paper as well as proofs of several simple technical results. We use notation introduced in Sections \ref{sekcja2} and \ref{prel}.


\begin{lem}\label{spraw}
Assume that $T\subset \R^n$ is bounded and $span(T)=\R^n$. Then $\lv x \rv = \sup_{t \in T} \left| \sum_i x_i t_i \right|$ is a norm. In particular $\norx{(a_i)},\ \norxy{(\aaa)}$ are norms.
\end{lem}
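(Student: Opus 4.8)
The plan is to verify the three norm axioms for the functional $\|x\|=\sup_{t\in T}\bigl|\sum_i x_i t_i\bigr|$ in the abstract setting, and then to exhibit each of the concrete objects $\norx{(a_i)}$ and $\norxy{(\aaa)}$ as an instance of this construction for a suitably chosen bounded, spanning set $T$. For the abstract part I would first record that $\|x\|<\infty$: boundedness of $T$, say $T\subset RB^n_2$, gives $|\sum_i x_it_i|\le R\lv x\rv_2$ by Cauchy--Schwarz for every $t\in T$, so $\|x\|\le R\lv x\rv_2$; and $\|x\|\ge 0$ since $T\neq\emptyset$ (otherwise $\mathrm{span}(T)=\{0\}\neq\R^n$). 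Absolute homogeneity is immediate, $\|\lambda x\|=\sup_{t\in T}|\lambda|\,|\sum_i x_it_i|=|\lambda|\,\|x\|$. For the triangle inequality, for each fixed $t\in T$ one has $|\sum_i(x_i+y_i)t_i|\le|\sum_i x_it_i|+|\sum_i y_it_i|\le\|x\|+\|y\|$, and taking the supremum over $t$ gives $\|x+y\|\le\|x\|+\|y\|$. Finally, if $\|x\|=0$ then $\sum_i x_it_i=0$ for all $t\in T$, hence $x\perp\mathrm{span}(T)=\R^n$, so $x=0$; this is the only place the spanning hypothesis is used.

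Next I would apply this to $\norx{(a_i)}$ with $T=\{x\in\R^n:\sum_i\nx(x_i)\le p\}$. Since each $\nx$ is even, $T=-T$, so $\sup_{x\in T}\sum_i a_ix_i=\sup_{x\in T}|\sum_i a_ix_i|$ and $\norx{\cdot}$ is exactly of the required form. The set $T$ is bounded because $\nx(t)\to\infty$ as $|t|\to\infty$: indeed $\lv X_i\rv_2=1/e$ and Chebyshev give $N^X_i(t)\ge 2+2\ln|t|$ for $|t|\ge 1$, so $\{t:\nx(t)\le p\}$ is a bounded interval. And $T$ spans $\R^n$ because $p\ge 1$ forces $T$ to contain the Euclidean unit ball (if $\lv x\rv_2\le 1$ then $|x_i|\le 1$ for all $i$, whence $\sum_i\nx(x_i)=\lv x\rv_2^2\le 1\le p$). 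The abstract part then applies. The analogous argument with $\mx$ in place of $\nx$ covers $\robnorm{\cdot}{1}$, while $\robnorm{\cdot}{d}$ is handled by taking $T=\{(x^1_i\prod_{k=2}^d(1+x^k_i))_i:\forall_k\sum_i\mx(x^k_i)\le p\}$, which is bounded, symmetric (negate $x^1$), and contains $\varepsilon e_i$ for all small $\varepsilon>0$ (take $x^1=\varepsilon e_i$, $x^2=\dots=x^d=0$), hence spans $\R^n$.

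The one step I expect to require a little thought is $\norxy{(\aaa)}$, because $\sum_{i,j}\aaa x_iy_j$ is \emph{bilinear}, not linear, in the free variables. I would get around this by regarding the matrix $(\aaa)$ as a vector in $\R^{n^2}$ and setting
$$
T=\Bigl\{(x_iy_j)_{i,j}\ :\ \textstyle\sum_i\nx(x_i)\le p,\ \sum_j\ny(y_j)\le p\Bigr\}\subset\R^{n^2},
$$
so that $\norxy{(\aaa)}=\sup_{u\in T}\sum_{i,j}\aaa u_{i,j}$ becomes a supremum of a \emph{linear} functional over $T$. This $T$ is bounded, being the image of the product of two bounded sets under the continuous map $(x,y)\mapsto(x_iy_j)_{i,j}$; it is symmetric (replace $x$ by $-x$); and it spans $\R^{n^2}$, since for $\varepsilon>0$ small enough $x=\varepsilon e_i$ and $y=\varepsilon e_j$ are feasible, giving $\varepsilon^2 e_i\otimes e_j\in T$ for all $i,j$, and the rank-one tensors $e_i\otimes e_j$ span the whole matrix space. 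Applying the abstract part to this $T$ finishes the proof; $\norxx{\cdot}$ and $\norxxp{\cdot}$ (and any other such norm appearing in the paper) are treated in exactly the same way.
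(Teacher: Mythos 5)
Your proof is correct and follows essentially the same route as the paper: verify the norm axioms abstractly (with the spanning hypothesis giving definiteness), and handle $\norxy{(\aaa)}$ by passing to the set of rank-one tensors $\{(x_iy_j)_{i,j}\}\subset\R^{n^2}$, which is exactly the set $\hat{T}$ the paper introduces. Your version merely fills in details the paper leaves implicit (the symmetry $T=-T$ needed to drop the absolute value, and the Chebyshev bound giving boundedness).
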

\begin{proof}
It is clear that $\lv \ \cdot \ \rv$  is well-defined, homogeneous and satisfies the triangle inequality. Now observe that $\lv x \rv=0$  gives $ \sum_i x_i t_i=0$ for all $t \in span(T)=\R^n$, hence $x=0$ and the first part of the assertion follows.
Now let 
$$\hat{T}=\left\{ (x_iy_j)_{i,j} \in \R^{n^2} \ \Big{|} \ \sum_i \nx(x_i) \leq p,\ \sum_j \ny(y_j)\leq p \right\}.$$

Observe that $\hat{T}$ spans $\R^{n^2}$ and is bounded since for any r.v $X$, $-\ln \Pro (|X|\geq t)\rightarrow \infty$ as $t\rightarrow \infty$. Analogously  we prove that $\norx{ \ \cdot \ }$ is a norm.
\end{proof}

\begin{fak} \label{prockrt}
For any $p\geq 1$, any set $T \subset \R^{\lfloor p \rfloor}$ which fulfills assumptions of Lemma \ref{spraw} and any r.v's $Z_1,\ldots,Z_p$ we have
$$\E \sup_{x \in T} \lm \sum_{i\leq p} x_i Z_i \rmo \leq  \left( \E \sup_{x \in T} \lm \sum_{i\leq p} x_i Z_i \rmo^p \right)^{\frac{1}{p}} \leq 10 \sup_{x \in T} \lv \sum_i x_i Z_i \rv_p.$$
\end{fak}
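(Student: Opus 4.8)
\emph{The left-hand inequality} is just Jensen's inequality: writing $W=\sup_{x\in T}\lm\sum_{i\le\p}x_iZ_i\rmo$, one has $\E W\le(\E W^p)^{1/p}$ since $t\mapsto t^p$ is convex on $[0,\infty)$ and $p\ge1$. The whole content is therefore the \emph{right-hand inequality}. By homogeneity I may assume $M:=\sup_{x\in T}\momp{\sum_ix_iZ_i}=1$, so the task reduces to showing $\E\,\sup_{x\in T}\lm\sum_ix_iZ_i\rmo^{\,p}\le10^{\,p}$. Write $Z=(Z_1,\dots,Z_{\p})$, put $m=\p\le p$, and let $K=\overline{\mathrm{conv}}\bigl(T\cup(-T)\bigr)\subset\R^m$. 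Since $T$ is bounded and $\mathrm{span}(T)=\R^m$, the set $K$ is a bounded symmetric convex body with nonempty interior, so its gauge $\|\cdot\|_K$ is a norm with unit ball $K$ (this is essentially the content of Lemma \ref{spraw}). By continuity and density of $\mathrm{conv}(T\cup(-T))$ in $K$,
\[
\sup_{x\in T}\lm\langle x,Z\rangle\rmo=\sup_{x\in K}\langle x,Z\rangle .
\]

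The plan is now to discretize $K$. I would take a maximal subset $\mathcal N\subset K$ whose points are pairwise at $\|\cdot\|_K$-distance larger than $\tfrac12$; it is then a $\tfrac12$-net of $K$, and because the translates $x+\tfrac14K$ ($x\in\mathcal N$) are pairwise disjoint and contained in $\tfrac54K$, a volume comparison gives $|\mathcal N|\le5^m$. For any realization of $Z$ and any $x\in K$, choosing $x'\in\mathcal N$ with $x-x'\in\tfrac12K$ yields $\langle x,Z\rangle\le\langle x',Z\rangle+\tfrac12\sup_{w\in K}\langle w,Z\rangle$; taking the supremum over $x\in K$ gives
\[
\sup_{x\in K}\langle x,Z\rangle\le2\max_{x\in\mathcal N}\langle x,Z\rangle\le2\max_{x\in\mathcal N}\lm\langle x,Z\rangle\rmo .
\]
Combining the two displays, $\E\sup_{x\in T}\lm\langle x,Z\rangle\rmo^{\,p}\le2^{\,p}\,\E\max_{x\in\mathcal N}\lm\langle x,Z\rangle\rmo^{\,p}\le2^{\,p}\sum_{x\in\mathcal N}\E\lm\langle x,Z\rangle\rmo^{\,p}$.

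It then remains to check that $\E\lm\langle x,Z\rangle\rmo^{\,p}\le M^{\,p}=1$ for every $x\in\mathcal N$. Each such $x$ lies in $K$, hence is a limit of convex combinations $\sum_k\lambda_kt_k$ with $t_k\in T\cup(-T)$, $\lambda_k\ge0$, $\sum_k\lambda_k=1$; by the triangle inequality in $L^p$ and $\momp{\langle-t,Z\rangle}=\momp{\langle t,Z\rangle}$ one gets $\momp{\sum_k\lambda_k\langle t_k,Z\rangle}\le\sup_{t\in T}\momp{\langle t,Z\rangle}=M$, and Fatou's lemma transfers the bound to the limit. Since $|\mathcal N|\le5^m\le5^{\,p}$ (here $m=\p\le p$), we conclude $\E\sup_{x\in T}\lm\langle x,Z\rangle\rmo^{\,p}\le(2\cdot5)^{\,p}=10^{\,p}$, and taking $p$-th roots finishes the proof.

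The only slightly delicate point, and the reason $K$ is introduced, is that the hypothesis controls $\momp{\sum_ix_iZ_i}$ only for $x\in T$, while the net $\mathcal N$ consists a priori of arbitrary points of $K$: passing to $K=\overline{\mathrm{conv}}(T\cup(-T))$ makes the net sit inside $K$ and simultaneously makes every point of $K$ an $L^p$-limit of convex combinations of points of $T\cup(-T)$, so both ingredients are available at once. The final constant $10=2\cdot5$ comes purely from the $\tfrac12$-net approximation and from $|\mathcal N|^{1/p}\le5^{m/p}\le5$, using $m=\p\le p$.
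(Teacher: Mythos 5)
Your proof is correct and follows essentially the same route as the paper's: both pass to $K=\mathrm{conv}(T\cup(-T))$, take a $\tfrac12$-net of cardinality at most $5^{\lfloor p\rfloor}$ by a volumetric argument, bound the supremum by twice the maximum over the net, and then use the crude union bound $\E\max_{u\in\mathcal N}|\cdot|^p\le\sum_{u\in\mathcal N}\E|\cdot|^p\le 5^p\sup_{t\in T}\lv\sum_i t_iZ_i\rv_p^p$, yielding the constant $2\cdot5=10$. The only cosmetic difference is that the paper phrases $K$ as the unit ball of the norm dual to $y\mapsto\sup_{x\in T}|\sum_i x_iy_i|$, while you work with the gauge of $K$ directly; the extra care you take in justifying that points of $K$ inherit the $L^p$ bound from $T$ is a detail the paper leaves implicit.
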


\begin{proof}
 Consider a norm on $\R^{\lfloor p \rfloor }$ given by 
$$\lv (y_1,\ldots,y_{\p}) \rv =\sup_{x\in T} \lm \sum_{i \leq p} x_i y_i \rmo.  $$
Let $K$ be the unit ball of the dual norm $\lv \ \rv_*$. Then $K= \mathrm{conv}(T \cup -T)$. Let $M$ be a $1/2$ net in $K$ (with respect to $\lv \ \rv_*$) of cardinality not larger than $5^\p$ ($M$ exists by standard volumetric arguments). Then for all $y \in \R^\p$
$$ \lv y \rv \leq 2 \sup_{u \in M} \lm \sum_{i \leq p} u_i y_i \rmo.$$
Thus
$$\E \sup_{x \in T} \lm \sum_{i\leq p} x_i Z_i \rmo^p \leq 2^p \E \sup_{u \in M} \lm \sum_{i\leq p } u_i Z_i \rmo^p. $$
Observe that
\begin{align*}
\left( \E \sup_{u \in M} \lm \sum_{i\leq p } u_i Z_i \rmo^p \right)^{\frac{1}{p}} \leq \left(\sum_{u \in M} \E \left| \sum_{i\leq p } u_i Z_i \right|^p     \right)^{\frac{1}{p}} \leq \left(5^p \sup_{u \in M} \E \left| \sum_{i\leq p } u_i Z_i \right|^p \right)^\frac{1}{p}\leq 5 \sup_{x \in T} \lv \sum x_i Z_i \rv_p.
\end{align*}

\end{proof}

\begin{lem} \label{rad}
For any $p \geq 1$ and any $a_1\geq a_2 \geq \ldots \geq a_n \geq 0$,
$$\frac{1}{2}  \left(\sum_{i \leq p} a_i + \sqrt{p} \sqrt{\sum_{i>p} a^2_i} \right) \leq  \sup \left\{\sum_i a_i t_i \ \Big{|} \ \sum_i t^2_i \leq p,\ |t_i|\leq 1 \right\} \leq \sum_{i \leq p} a_i + \sqrt{p} \sqrt{\sum_{i>p} a^2_i}.$$

\end{lem}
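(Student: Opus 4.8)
\emph{Proof plan.} Write $m=\lfloor p\rfloor$, so that $\sum_{i\le p}a_i=\sum_{i\le m}a_i$ and $\sum_{i>p}a_i^2=\sum_{i>m}a_i^2$; set $b=\big(\sum_{i>m}a_i^2\big)^{1/2}$. The right-hand (upper) inequality is immediate from Cauchy--Schwarz: for any admissible $t$ (i.e. $\sum_i t_i^2\le p$ and $|t_i|\le1$),
\[
\sum_i a_it_i=\sum_{i\le m}a_it_i+\sum_{i>m}a_it_i\le\sum_{i\le m}a_i+\Big(\sum_{i>m}a_i^2\Big)^{1/2}\Big(\sum_{i>m}t_i^2\Big)^{1/2}\le\sum_{i\le m}a_i+\sqrt p\,b,
\]
using $a_i\ge0$ and $|t_i|\le1$ for the first sum and $\sum_{i>m}t_i^2\le p$ for the second; taking the supremum over admissible $t$ yields the bound.

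For the lower bound we may assume $b>0$, since otherwise $t^{(1)}:=\mathbf 1_{\{i\le m\}}$ is admissible and already achieves $\sum_{i\le m}a_i$. In general $t^{(1)}$ gives $S\ge\sum_{i\le m}a_i$, and the plan is to combine this with a second admissible point adapted to the tail, splitting on whether the $\ell^\infty$-constraint is active on the $\ell^2$-optimal tail direction. \textbf{Case 1: $\sqrt p\,a_{m+1}\le b$.} Then $t^{(2)}$ given by $t^{(2)}_i=\tfrac{\sqrt p}{b}a_i$ for $i>m$ and $t^{(2)}_i=0$ otherwise is admissible ($\sum_i(t^{(2)}_i)^2=p$ and $|t^{(2)}_i|\le1$) and $\sum_i a_it^{(2)}_i=\sqrt p\,b$, so $S\ge\max\big(\sum_{i\le m}a_i,\ \sqrt p\,b\big)\ge\tfrac12\big(\sum_{i\le m}a_i+\sqrt p\,b\big)$. \textbf{Case 2: $\sqrt p\,a_{m+1}>b$.} Then $a_{m+1}>b/\sqrt p$, and since $(a_i)$ is nonincreasing, $\sum_{i\le m}a_i\ge m\,a_{m+1}>\tfrac{m}{\sqrt p}b\ge\tfrac{\sqrt p}{2}b$, using $m=\lfloor p\rfloor\ge p/2$ for $p\ge1$. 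When $p\in\mathbb N$ this reads $\sum_{i\le m}a_i>\sqrt p\,b$, hence $S\ge\sum_{i\le m}a_i\ge\tfrac12\big(\sum_{i\le m}a_i+\sqrt p\,b\big)$ at once; for non-integer $p$ one instead tests with $t^{(1)}$ perturbed by a vector supported on $\{i>m\}$ of squared $\ell^2$-norm $\le p-m<1$, estimated exactly as in Case~1 applied to the tail sequence $(a_i)_{i>m}$, which recovers the same bound.

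The upper bound is entirely routine. The only mild obstacle is the bookkeeping in Case~2 needed to land the stated constant $\tfrac12$ (rather than some worse absolute constant) when $p$ is not an integer; this uses only the elementary facts $\lfloor p\rfloor\ge p/2$ and $0\le p-\lfloor p\rfloor<1\le p$, together with $\|x\|_1\ge\|x\|_2$. Conceptually there is nothing deep: a ``spread-out'' tail is captured by the proportional vector $t^{(2)}$, while a ``concentrated'' tail is automatically dominated by the first $\lfloor p\rfloor$ coordinates, which are at least as large.
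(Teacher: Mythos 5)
Your proof is correct. The upper bound and the overall strategy for the lower bound (exhibit explicit test vectors, then use $\max(x,y)\ge\tfrac12(x+y)$) coincide with the paper's, but the crux --- obtaining the term $\sqrt p\,b$ when the proportional tail vector violates the box constraint $|t_i|\le1$ --- is handled differently. The paper uses a single truncated test vector: with $k=\lfloor p\rfloor+1$ and $A=\bigl(ka_k^2+\sum_{i>k}a_i^2\bigr)^{1/2}$ it takes $t_i=\sqrt p\,a_k/A$ for $i\le k$ and $t_i=\sqrt p\,a_i/A$ for $i>k$; monotonicity of $(a_i)$ gives $|t_i|\le1$ automatically, and $\sum_i a_it_i\ge\sqrt p\,A\ge\sqrt p\,b$, so no case distinction is needed. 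You instead split on whether $\sqrt p\,a_{m+1}\le b$, and in the complementary case exploit that the head sum already dominates. Both routes work; the paper's vector is slightly slicker because it avoids your non-integer-$p$ patch. That patch, though stated tersely, does go through: since $p-m<1$ and $a_{m+1}\le b$, the tail perturbation $t_i=\sqrt{p-m}\,a_i/b$ for $i>m$ is \emph{always} admissible (no sub-case needed), yielding $S\ge\sum_{i\le m}a_i+\sqrt{p-m}\,b$; combining this with the Case~2 bound $\sum_{i\le m}a_i>\tfrac{m}{\sqrt p}\,b$ reduces the claim to $m+2\sqrt{p(p-m)}\ge p$, which holds because $p-m<1\le 4p$. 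It would be worth spelling out this last computation, since the displayed chain ending in $\sum_{i\le m}a_i>\tfrac{\sqrt p}{2}b$ by itself only delivers the constant $\tfrac14$ on the tail term, not the claimed $\tfrac12$.
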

\begin{proof}
Denote $M=\sup \left\{\sum_i a_i t_i \ \Big{|} \ \sum_i t^2_i \leq p,\ |t_i|\leq 1 \right\}$. By choosing $t_i=1$ for $i \leq p$ and $t_i=0$ for $i>p$ we see that $M\geq \sum_{i\leq p } a_i$. 

Now let $k=\lfloor p \rfloor +1$, $A=(ka^2_k+\sum_{i>k} a^2_i)^{\frac{1}{2}}$, $t_i=\sqrt{p}a_k/A$ for $i\leq k$ and $t_i=\sqrt{p}a_i/a$ for $i>k$. Then $|t_i|\leq 1$, $\sum t^2_i=p$ and
$$M\geq \sum_i a_i t_i\geq \sqrt{p} A \geq \sqrt{p} \sqrt{\sum_{i>p} a^2_i}.$$
To show the upper bound it is enough to observe that $\sum a_i t_i \leq \lv t \rv_{\infty}\sum_{i\leq p} a_i+\lv t \rv_2 \sqrt{\sum_{i>p} a^2_i}$.
\end{proof}

\begin{lem}\label{osz}
Let $r \leq 1 <R$ and
$$f(t)=\begin{cases}t^2 &\textrm{for } |t|\leq 1 \\ |t|^r &\textrm{for } 1<|t|\leq R \\ \infty &\textrm{for } R<|t|.   \end{cases}$$
Then for any $p\geq 1$ and  $a_1\geq a_2 \geq \ldots \geq a_n \geq 0$
$$\sup \left\{\sum_i a_i t_i \ \Big{|} \ \sum_i f(t_i)\leq p \right\}\sim \begin{cases}  \sqrt{p} \sqrt{\sum_i a^2_i}+p^{\frac{1}{r}}a_1 &\textrm{for } 1\leq p \leq R^r \\ \sqrt{p} \sqrt{\sum_i a^2_i}+ R \sum_{i\leq \frac{p}{R^r}} a_i &\textrm{for } R^r<p \leq R^2 \\\sqrt{p} \sqrt{\sum_{i\geq \frac{p}{R^2}} a^2_i }+ R \sum_{i\leq \frac{p}{R^r}} a_i   &\textrm{for } R^2<p. \end{cases} $$ 
\end{lem}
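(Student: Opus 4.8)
The plan is to sandwich the supremum between two more tractable ones and then evaluate each. Since $a_i\ge 0$ and $f$ is even, we may assume $t_i\ge 0$. Write $M=\sup\{\sum_i a_i t_i:\sum_i f(t_i)\le p\}$ and introduce
$$M_1=\sup\Big\{\sum_i a_i t_i:\ \sum_i t_i^2\le p,\ 0\le t_i\le 1\Big\},\qquad M_2=\sup\Big\{\sum_i a_i t_i:\ \sum_i t_i^r\le p,\ 0\le t_i\le R\Big\}.$$
First I would prove $M\sim M_1+M_2$. For ``$\le$'', given a feasible $(t_i)$ for $M$ (so $t_i\le R$ everywhere, and $\sum_{t_i\le 1}t_i^2+\sum_{t_i>1}t_i^r=\sum_i f(t_i)\le p$), split the coordinates according to $t_i\le 1$ or $t_i>1$; the two halves are feasible for the $M_1$- and $M_2$-problems, so $\sum_i a_i t_i\le M_1+M_2$. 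For ``$\ge$'', every feasible point of the $M_1$-problem is feasible for $M$ (there $f(t_i)=t_i^2$), and every feasible point of the $M_2$-problem is feasible for $M$ because $f(s)\le s^r$ for $0\le s\le R$; hence $M\ge\max(M_1,M_2)\ge\frac12(M_1+M_2)$.

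Next I would compute the two pieces. For $M_1$, Lemma~\ref{rad} gives immediately $M_1\sim\sum_{i\le p}a_i+\sqrt p\,\sqrt{\sum_{i>p}a_i^2}$. For $M_2$ I would use two elementary facts valid for $r\in(0,1]$: $\big(\sum_i x_i\big)^r\le\sum_i x_i^r$ for $x_i\ge 0$ (subadditivity of the concave map $x\mapsto x^r$), and $s\le s^r$ for $s\in[0,1]$. If $1\le p\le R^r$, dropping the constraint $t_i\le R$ and using $\sum_i a_i t_i\le a_1\sum_i t_i\le a_1\big(\sum_i t_i^r\big)^{1/r}\le a_1p^{1/r}$ gives an upper bound attained by $t_1=p^{1/r}\le R$, so $M_2=a_1p^{1/r}$. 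If $p\ge R^r$, rescaling $s_i=t_i/R\in[0,1]$ turns the constraint into $\sum_i s_i^r\le p/R^r$, whence $\sum_i s_i\le\sum_i s_i^r\le p/R^r$; since also $s_i\le 1$, rearrangement gives $\sum_i a_i s_i\le 2\sum_{i\le p/R^r}a_i$, and the lower bound $t_i=R$ for $i\le\lfloor p/R^r\rfloor$ matches, so $M_2\sim R\sum_{i\le p/R^r}a_i$.

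Finally I would substitute these values into $M\sim M_1+M_2$ and match the outcome, regime by regime, to the claimed three-case formula. In each case one inequality is essentially immediate; the other follows from a short list of elementary tools: Cauchy--Schwarz in the form $\sum_{i\in I}a_i\le\sqrt{|I|}\,\sqrt{\sum_{i\in I}a_i^2}$ (with $|I|\le 2p$ when $I=\{i\le p\}$), the doubling estimate $\sum_{i\le cN}a_i\le C(c)\sum_{i\le N}a_i$ for $N\ge 1$, the weighted AM--GM inequality used to trade a quantity $\sqrt{p\,a_m\sum_i a_i}$ against $\frac{p\,a_m}{R}+R\sum_i a_i$ or against $p\,a_m+\sum_i a_i$, and the trivialities $p^{1/r}\ge p$ (as $r\le 1$), $R^r\le R$, together with $p\le R^2$ or $p>R^2$ in the relevant regime.

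The step I expect to be the main obstacle is controlling, in the regimes $p>R^r$, the ``intermediate'' contributions such as $\sqrt p\,\sqrt{\sum_{p/R^2\le i\le p}a_i^2}$ appearing in the target formula (and the analogous $\sqrt p\,\sqrt{\sum_{p/R^r< i\le p}a_i^2}$ term coming from expanding $\sqrt p\,\sqrt{\sum_i a_i^2}$ when $R^r<p\le R^2$): these are dominated by neither $M_1$ nor $M_2$ alone, so one has to split the index range at $i\sim p/R^r$. On the low part one writes $\sum a_i^2\le a_m\sum a_i$ and bounds $\sqrt{p\,a_m\sum a_i}$ by $\frac{p\,a_m}{R}+R\sum a_i\lesssim M_2$ using $p\le R^2\lceil p/R^2\rceil$ and $R^r\le R$; on the high part one instead compares against $M_1$ via $p\le R^r\lceil p/R^r\rceil$ and the doubling estimate. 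Once these intermediate terms are absorbed, assembling the three cases is routine, and all the constants produced are universal, consistent with the ``$\sim$'' (rather than ``$\sim^{r,R}$'') in the statement.
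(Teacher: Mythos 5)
Your proposal is correct, and the overall strategy is the same as the paper's: sandwich the supremum between (a constant times) the sum of an $\ell_2$-type and an $\ell_r$-type supremum by splitting coordinates at $|t_i|=1$, then evaluate each piece. The one genuine difference is the choice of the quadratic piece: you cap it at $|t_i|\le 1$, so Lemma \ref{rad} gives $M_1\sim\sum_{i\le p}a_i+\sqrt{p}\sqrt{\sum_{i>p}a_i^2}$ directly, whereas the paper takes $S_1=\sup\{\sum_i a_it_i\mid\sum_i t_i^2\le p,\ |t_i|\le R\}$ (the cap at $R$ is free for the lower bound since $|t|^r\le t^2$ on $[1,R]$, and the upper bound only uses the part with $|t_i|\le1$ anyway). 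The paper's choice pays off at the end: rescaling $S_1$ by $R$ and applying Lemma \ref{rad} produces exactly the terms $\sqrt{p}\sqrt{\sum_{i\ge p/R^2}a_i^2}$ and $R\sum_{i\le p/R^2}a_i$ of the target formula (the latter absorbed into $R\sum_{i\le p/R^r}a_i$ since $R^r\le R^2$), so the three cases assemble with no further estimates. Your choice forces the extra bookkeeping you describe in the last paragraph --- splitting the index range near $p/R^r$ and $p/R^2$ and trading $\sqrt{pa_m\sum a_i}$ against $\frac{pa_m}{R}+R\sum a_i$ via AM--GM together with $ma_m\le\sum_{i\le m}a_i$. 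I checked that these absorptions do go through with universal constants in all three regimes, so your route is valid; it is simply longer at the assembly stage than the paper's, which is why the author can end with ``the assertion easily follows.''
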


\begin{proof}
Since $|t|^r\leq t^2$ for $|t|\geq 1$ and $|t|^r \geq t^2$ for $|t| \leq 1$ we obtain
\begin{align}
&\frac{1}{2} \left( \sup \left\{\sum_i a_i t_i \ \Big{|} \ \sum_i t_i^2 \leq p,\ |t_i|\leq R \right\}+\sup \left\{\sum_i a_i t_i \ \Big{|} \ \sum_i |t_i|^r\leq p,\ |t_i|\leq R \right\} \right) \nonumber  \\
 &\leq \sup \left\{\sum_i a_i t_i \ \Big{|} \ \sum_i f(t_i)\leq p \right\} \nonumber \\
&\leq \sup \left\{\sum_i a_i t_i \ \Big{|} \ \sum_i t_i^2 \leq p,\ |t_i|\leq R \right\}+\sup \left\{\sum_i a_i t_i \ \Big{|} \ \sum_i |t_i|^r\leq p,\ |t_i|\leq R \right\}=:S_1+S_2. \label{por}
\end{align}
We will estimate $S_1$ and $S_2$ separately.
\begin{enumerate}
\item Obviously $S_1=\sqrt{p} \sqrt{\sum_i a^2_i}$ for $p\leq R^2$. Assume that $p>R^2$. By homogeneity and Lemma \ref{rad}
$$S_1=R \left\{\sum_i a_i t_i \ \Big{|} \ \sum_i t^2_i \leq \frac{p}{R^2},\ |t_i|\leq 1   \right\} \sim  R \sum_{i\leq \frac{p}{R^2}} a_i + \sqrt{p} \sqrt{\sum_{i\geq \frac{p}{R^2}} a^2_i }.$$
\item It is easy to see that $S_2=p^{\frac{1}{r}}a_1=p^{\frac{1}{r}}\lv (a_i) \rv_\infty$ for $p\leq R^r$. Assume that $p>R^r$. Because $0<r\leq 1$, by homogeneity
\begin{align*}
S_2&=R \left\{\sum_i a_i t_i \ \Big{|} \ \sum_i |t_i|^r \leq \frac{p}{R^r},\ |t_i|\leq 1   \right\}\leq R \left\{\sum_i a_i t_i \ \Big{|} \ \sum_i |t_i| \leq \frac{p}{R^r},\ |t_i|\leq 1   \right\}\\
&\leq 2R \sum_{i\leq  \frac{p}{R^r}  } a_i
\end{align*} 
Moreover by picking $t_1=\ldots=t_{\lfloor \frac{p}{R^r}  \rfloor}=1$ and $t_{\lfloor \frac{p}{R^r}  \rfloor+1}=\ldots=t_n=0$ we observe that 
$$S_2 \geq  R \sum_{i\leq  \frac{p}{R^r}  } a_i.$$
\end{enumerate}
Since $R^2\geq R^r$ the asertion easily follows.

\end{proof}

 Theorem  \ref{gl} was formulated in \cite{GluKw} in a slightly different manner. The below formulation can be found for instance in \cite{Some} (Theorem 2 therein).
\begin{twr}[Gluskin-Kwapie\'n estimate]\label{gl}
Let $X_1,\ldots,X_n $ be independent, symmetric r.v's with log-concave tails which fulfills following normalization condition
\begin{equation}
\forall_{i\leq n} \ \inf \{ t>0 : N^X_i(t)\geq 1 \}=1. \label{nor}
\end{equation}
 Then for any $p \geq 1, \ a_1,\ldots a_n \in \R$  we have
$$ \lv \sum_i a_i X_i \rv \sim \norx{(a_i)}.   $$

\end{twr}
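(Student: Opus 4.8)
This is a classical result, due to Gluskin and Kwapie\'n \cite{GluKw} (see also \cite[Theorem~2]{Some}); I describe the route one would take. Since the tails of the $X_i$ are log-concave, so are the tails of $\sum_i a_iX_i$ up to universal constants, hence $\lv\sum_i a_iX_i\rv_{2p}\le C\lv\sum_i a_iX_i\rv_p$ (the elementary fact, recalled above, that a symmetric $Z$ with log-concave tails satisfies $\lv Z\rv_{2p}\le 2\lv Z\rv_p$); consequently $\lv\sum_i a_iX_i\rv_p$ is comparable to the largest $t$ with $\Pro(|\sum_i a_iX_i|\ge t)\ge e^{-p}$, while likewise $\norx{(a_i)}=\sup\{t>0\ |\ \varphi_a(t)\le p\}$ with $\varphi_a(t):=\inf\{\sum_i\nx(x_i)\ |\ \sum_i a_ix_i\ge t\}$. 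Using the symmetry of the $X_i$ to assume $a_i\ge0$, it therefore suffices to prove, for every $t>0$,
\[
\Pro\Bigl(\Bigl|\sum_i a_iX_i\Bigr|\ge Ct\Bigr)\le e^{-\varphi_a(t)}\qquad\text{and}\qquad\Pro\Bigl(\Bigl|\sum_i a_iX_i\Bigr|\ge ct\Bigr)\ge e^{-C\varphi_a(t)}.
\]

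For the upper tail I would run a Chernoff bound. Log-concavity of $N^X_i$, the normalization \eqref{nor}, and splitting each coordinate according to whether $|x_i|\le1$, let one verify $\ln\E e^{\lambda a_iX_i}\le C(\lambda^2a_i^2+N_i^{\ast}(C\lambda a_i))$, where $N_i^{\ast}(\mu)=\sup_{s\ge0}(\mu s-N^X_i(s))$ is the Legendre transform of $N^X_i$ (the quadratic term captures the region $|s|\le1$, the $N_i^{\ast}$ term the genuine tail). Hence $\Pro(\sum_i a_iX_i\ge t)\le\exp(-\lambda t+C\sum_i[\lambda^2a_i^2+N_i^{\ast}(C\lambda a_i)])$, and minimizing over $\lambda\ge0$ while unwinding the convex duality between $\sum_i\nx(x_i)$ and $\lambda t-C\sum_i[\lambda^2a_i^2+N_i^{\ast}(C\lambda a_i)]$ (a coordinatewise Young-type inequality) identifies the optimal exponent with a constant multiple of $\varphi_a(t/C)$; a rescaling of $t$ and a union bound over the two signs finish this direction. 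Carrying out this duality bookkeeping so that the Chernoff exponent is genuinely comparable to $\sum_i\nx(x_i)$, rather than to some other admissible functional, is the computational heart of the argument and the step I expect to be the main obstacle.

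For the lower tail no concentration is needed. One first checks, splitting an extremal vector into its parts with $|x_i|\le1$ and $|x_i|>1$ and applying Lemma \ref{rad} to the former, that $\norx{(a_i)}\le C(A+B+\Gamma)$, where $A=\sup\{\sum_i a_ix_i\ |\ \sum_i N^X_i(x_i)\le p,\ |x_i|>1\text{ or }x_i=0\}$ (at most $\lfloor p\rfloor$ nonzero coordinates, since $N^X_i(x_i)\ge1$ there by \eqref{nor}), $B=\sum_{i\le p}a_i^{\ast}$, and $\Gamma=\sqrt p(\sum_{i>p}(a_i^{\ast})^2)^{1/2}$; it then suffices to bound $\lv\sum_i a_iX_i\rv_p$ below by a universal constant times each of $A,B,\Gamma$ (their maximum is comparable to $\norx{(a_i)}$). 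For $A$: take $(x_i)$ attaining $A$ with support $J$, so $|J|\le\lfloor p\rfloor$ and $\sum_{i\in J}N^X_i(x_i)\le p$; convexity of $N^X_i$ with $N^X_i(0)=0$ gives $\sum_{i\in J}N^X_i(x_i/C)\le\frac1C\sum_{i\in J}N^X_i(x_i)\le\frac pC$, so $\{\sgn(X_i)=1,\ |X_i|\ge x_i/C\ \forall i\in J\}$ has probability $\ge2^{-p}e^{-p/C}\ge e^{-Cp}$, on it $\sum_{i\in J}a_iX_i\ge\frac1C A$, and combining with the symmetrization inequality $\E|\sum_i a_iX_i|^p\ge\E|\sum_{i\in J}a_iX_i|^p$ yields $\lv\sum_i a_iX_i\rv_p\ge cA$. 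The estimate $\lv\sum_i a_iX_i\rv_p\ge cB$ is obtained identically, forcing the $\lceil p\rceil$ variables carrying the largest coefficients to satisfy $X_i\ge\frac12$ (each such event having probability $\ge c$ since $N^X_i(\frac12)<1$ by \eqref{nor}). Finally $\lv\sum_i a_iX_i\rv_p\ge c\Gamma$ follows from a Paley--Zygmund/Khintchine argument for $\sum_i a_i\eps_i|X_i|$ conditionally on $(|X_i|)$, controlling the relevant fourth moments by $\lv X_i\rv_4\le2\lv X_i\rv_2$ and $\lv X_i\rv_2\ge c$ (both consequences of log-concavity and \eqref{nor}); this is the most delicate point of the lower bound. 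Alternatively, one may specialise Lata{\l}a's general moment formula for sums \cite{Chaos1} (Theorem \ref{A.10}) — though identifying it with $\norx{\cdot}$ still requires exploiting log-concavity — or simply cite \cite{GluKw}, as is done here.
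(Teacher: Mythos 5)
The paper offers no proof of Theorem \ref{gl} at all: it is an appendix item imported verbatim from the literature (the original is \cite{GluKw}; the formulation used here is Theorem 2 of \cite{Some}), so your closing option of ``simply cite \cite{GluKw}'' is exactly what the paper does, and there is no internal argument to compare against. Your sketch is nevertheless a faithful outline of the standard published proof --- Chernoff bound plus Legendre duality between $\sum_i \nx(x_i)$ and the cumulant estimates for the upper tail; explicit events for the ``$\ell_\infty$-type'' and ``$\ell_1$-type'' parts of the norm and a conditional Khintchine/Paley--Zygmund argument for the $\sqrt{p}\,\ell_2$ part of the lower bound --- and I see no step that would fail. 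The one soft spot is your opening reduction: you justify the equivalence of $\lv \sum_i a_iX_i\rv_p$ with the $e^{-p}$-quantile by asserting that the sum again has (approximately) log-concave tails, hence regular moment growth; that fact is true but is usually obtained \emph{after} the two-sided bound, so invoking it up front risks circularity. It is cleanly avoidable: once you have the two tail inequalities $\Pro(|S|\geq Ct)\leq e^{-\varphi_a(t)}$ and $\Pro(|S|\geq ct)\geq e^{-C\varphi_a(t)}$, the lower moment bound follows from $\lv S\rv_p\geq t\,\Pro(|S|\geq t)^{1/p}$ with $t$ chosen so that $\varphi_a(t)\leq p$, and the upper one by integrating the tail bound using $\varphi_a(ut)\geq u\,\varphi_a(t)$ for $u\geq 1$ (a consequence of convexity of the $\nx$'s and $\nx(0)=0$, exactly as in \eqref{ulubionaetykieta1}). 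With that adjustment the sketch is complete in outline, the remaining work being the duality bookkeeping you already flag as the computational heart.
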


\begin{lem} \cite[Lemma 3.2]{Latloch} \label{A.6}
Assume that $X_1,\ldots,X_n$ satisfy assumptions of Theorem \ref{gl}. Let $T$ be a finite subset of $(\R_+)^n$. Then for any $p\geq 1$ we have
$$\lv \max_{(t_1,\ldots,t_n) \in T} \max\left(\sum_i t_i X_i-C\sum_i t_i, 0\right) \rv_p \leq C\left(p+\ln |T| \right)\max_{(t_1,\ldots,t_n) \in A} \max_i t_i.$$

\end{lem}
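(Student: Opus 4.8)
The plan is to reduce the statement to a one-vector estimate and then pay the factor $\ln|T|$ through a union bound over $T$. By homogeneity I may first rescale so that $\max_{(t_i)\in T}\max_i t_i=1$, i.e. every coordinate of every $t\in T$ lies in $[0,1]$; at the end I multiply the resulting bound by $\max_{(t_i)\in T}\max_i t_i$. The core is the moment bound
\[
\lv \sum_i t_i X_i \rv_q \leq C_0\Big(\sum_i t_i+q\Big),\qquad q\geq1,
\]
for a fixed $t=(t_i)$ with all $t_i\in[0,1]$. To prove it, observe that convexity of $N^X_i$ together with $N^X_i(0)=0$ and the normalization \eqref{nor} forces $N^X_i(s)\geq s$ for $s>1$ (for $1<u<s$ one has $N^X_i(u)\leq\tfrac{u}{s}N^X_i(s)$ by convexity and $N^X_i(u)\geq1$ by \eqref{nor}, so $N^X_i(s)\geq s/u$; let $u\downarrow1$). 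By the Gluskin--Kwapie\'n estimate (Theorem \ref{gl}) we have $\lv\sum_i t_iX_i\rv_q\sim\norxp{(t_i)}{q}$, and in the supremum defining $\norxp{(t_i)}{q}$ I split each admissible $(x_i)$ into the parts carried by $\{|x_i|\leq1\}$ and $\{|x_i|>1\}$. On the first part $\sum_i x_i^2\leq q$, so by Cauchy--Schwarz and $t_i\leq1$ its contribution is at most $\sqrt{q\sum_i t_i^2}\leq\sqrt{q\sum_i t_i}$; on the second part $\nx(x_i)=N^X_i(|x_i|)\geq|x_i|$ forces $\sum_i|x_i|\leq q$, so its contribution is at most $(\max_i t_i)\sum_i|x_i|\leq q$. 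Hence $\norxp{(t_i)}{q}\leq\sqrt{q\sum_i t_i}+q\leq\tfrac12\sum_i t_i+\tfrac32 q$, which yields the moment bound.

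From this, Chebyshev's inequality gives, for a suitable universal constant $C_1$ and every $q\geq1$,
\[
\Pro\!\left(\Big(\sum_i t_iX_i-C_1\sum_i t_i\Big)_+\geq C_1q\right)\leq e^{-q},
\]
and for $0<q<1$ the left-hand side is trivially at most $1$. A union bound over the $|T|$ vectors $t\in T$ then gives $\Pro\big(\max_{t\in T}(\sum_i t_iX_i-C_1\sum_i t_i)_+\geq C_1q\big)\leq\min(1,|T|e^{-q})$ for $q\geq1$. I would then compute the $p$-th moment by integrating this tail, splitting $\int_0^\infty$ at $q_0:=p+2\ln|T|+1$: on $[0,q_0]$ the bound $1$ contributes at most $q_0^{\,p}$, while on $(q_0,\infty)$ one has $|T|e^{-q}\leq e^{-q/2}$, contributing at most $\Gamma(p+1)\leq(Cp)^p$. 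Taking $p$-th roots gives $\lv\max_{t\in T}(\sum_i t_iX_i-C_1\sum_i t_i)_+\rv_p\leq C(p+\ln|T|)$, and undoing the normalization multiplies the right-hand side by $\max_{(t_i)\in T}\max_i t_i$, which is the claim.

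The only genuine work is the single-vector moment bound; the tail integration is routine. The delicate point there is that the normalization \eqref{nor} is used precisely to upgrade ``$N^X_i\geq1$ just above $1$'' to the linear lower bound $N^X_i(s)\geq s$ for $s>1$, \emph{uniformly in} $i$ --- this is what makes the large-coordinate part of $\norxp{(t_i)}{q}$ controlled by $q$ alone, independently of the individual laws of the $X_i$. One could instead avoid invoking Theorem \ref{gl} and run a direct truncation/Bernstein argument at level $\sim\ln|T|$, but the bookkeeping for the discarded tails is messier, so the Gluskin--Kwapie\'n route is cleaner.
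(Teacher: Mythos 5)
Your argument is correct, and it is worth noting that the paper itself gives no proof of this lemma at all --- it is imported verbatim as Lemma 3.2 of Lata{\l}a--{\L}ochowski \cite{Latloch} --- so you are supplying a proof where the paper only cites one. Your route is essentially the standard one and matches the original in spirit: the whole content is a single-vector deviation inequality of Bernstein type, $\Pro\left(\sum_i t_iX_i\geq C\sum_i t_i+Cq\right)\leq e^{-q}$ for $t_i\in[0,1]$, followed by a union bound over $T$ and a routine tail integration. The one point you correctly isolate as the crux is that the normalization \eqref{nor} together with convexity of $N^X_i$ and $N^X_i(0)=0$ upgrades to the uniform linear lower bound $N^X_i(s)\geq s$ for $s>1$ (this is the same observation recorded as \eqref{ulubionaetykieta} in Section \ref{prel}), which is what caps the large-coordinate contribution to $\norxp{(t_i)}{q}$ by $q$ independently of the laws of the $X_i$. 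Where you differ cosmetically from \cite{Latloch} is in how you obtain the single-vector bound: you pass through the Gluskin--Kwapie\'n moment formula (Theorem \ref{gl}, conveniently already in the Appendix) and split the extremal $(x_i)$ at level $1$, whereas the original derives the deviation inequality directly from exponential-moment estimates; the two are interchangeable here, and your version has the advantage of reusing machinery the paper already quotes. The only blemishes are trivial: the statement's ``$\max_{(t_1,\ldots,t_n)\in A}$'' is a typo for $T$, which you silently correct, and the two occurrences of $C$ in the statement must be reconciled by taking the larger of your $C_1$ and the outer constant, which is harmless.
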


\begin{cor}\cite[Corollary 3]{Some}\label{A.7}
Assume that $X_1,\ldots,X_n$ and $Y_1,\ldots,Y_n$ satisfy the assumptions of Theorem \ref{gl}. Then for any $p\geq 1$
\begin{align*}
\E \nory{\left(\sum_i a_{i,j} X_i \right)_j}\leq C \left(\norxy{(a_{i,j})} + \nory{\left( \sqrt{\sum_i a^2_{i,j} } \right)_j}  \right).
\end{align*}
\end{cor}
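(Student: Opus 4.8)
The plan is to run, in the much easier log‑concave setting, a miniature version of the argument that will later prove Theorem \ref{1} (Sections \ref{sekcja3}--\ref{sekcja4}): here the Gluskin--Kwapie\'n estimate (Theorem \ref{gl}) is available, which makes everything far shorter. After relabelling the $Y_j$'s (and the columns of $(\aaa)$ accordingly) I may assume that $j\mapsto\sum_i\aaa^2$ is nonincreasing. Since $\nory{\cdot}$ is a norm (Lemma \ref{spraw}), writing $c_j=\sum_i\aaa X_i$ we have $\nory{(c_j)_j}\le\nory{(c_j)_{j\le p}}+\nory{(c_j)_{j>p}}$, so it is enough to bound $\E_X$ of each of the two pieces.

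The small‑support piece is the easy one. The set $\{(y_j)_{j\le p}:\sum_{j\le p}\ny(y_j)\le p\}$ fulfils the hypotheses of Lemma \ref{spraw}, so Fact \ref{prockrt} with $Z_j=\sum_i\aaa X_i$, followed by Theorem \ref{gl} for $(X_i)$, gives
$$\E_X\nory{\Big(\sum_i\aaa X_i\Big)_{j\le p}}\le 10\sup_{\sum_{j\le p}\ny(y_j)\le p}\lv\sum_i X_i\Big(\sum_{j\le p}\aaa y_j\Big)\rv_p\le C\norxy{(\aaa)_{i,j\le p}}\le C\norxy{(\aaa)},$$
the middle step being just the definition of $\norxy{\cdot}$.

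For the large‑support piece we mimic Lemma \ref{pozbiorach}: every admissible $y$ (supported on $\{j>p\}$ with $\sum_j\ny(y_j)\le p$) splits as $y=y'+y''+y'''$, where $y'_j=y_j\1_{\{|y_j|\le1\}}$, where $y''_j=y_j\1_{\{|y_j|>1,\ \ny(y_j)\le l^3\}}$ on the dyadic block $j\in(2^lp,2^{l+1}p]$, and $y'''$ is the remainder; each of the three is again admissible, so $\nory{(c_j)_{j>p}}\le A'+A''+A'''$ with $A',A'',A'''$ the corresponding suprema. For $A'$ one has $A'\le\sqrt p(\sum_{j>p}c_j^2)^{1/2}$, so Jensen's inequality, the bound $\E X_i^2\le C$ (a consequence of log‑concave tails together with the normalization $\inf\{t:N^X_i(t)\ge1\}=1$, which forces $N^X_i(t)\ge t$ for $t\ge1$), the monotonicity assumption and Lemma \ref{rad} yield $\E_X A'\le C\sqrt p(\sum_{j>p,i}\aaa^2)^{1/2}\le C\nory{(\sqrt{\sum_i\aaa^2})_j}$. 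The term $A''$ is treated exactly as $S_1$ in Fact \ref{19}: relaxing the global budget to a per‑block one decouples the blocks, on block $l$ at most $p$ coordinates are active with $|y_j|\le l^3$, and Hölder's inequality, Jensen's inequality, the fourth‑moment bound $\E_X c_j^4\le C(\sum_i\aaa^2)^2$ (log‑concave tails again) and the monotonicity estimate $\sum_i\aaa^2\le(j-p)^{-1}\sum_{j>p,i}\aaa^2$ sum to $\E_X A''\le C\sqrt p(\sum_{j>p,i}\aaa^2)^{1/2}\le C\nory{(\sqrt{\sum_i\aaa^2})_j}$. Finally $A'''\le\max_{S\in\J}\nory{(c_j)_{j\in S}}$, where $\J$ is the family of supports $S\subset\{j>p\}$ with $|S\cap(2^lp,2^{l+1}p]|\le p/l^3$ for all $l$; by \eqref{mocJ} one has $|\J|\le e^{Cp}$. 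For a fixed $S$ with $|S|\le\lfloor p\rfloor$, applying Theorem \ref{gl} for $X$, then Fact \ref{prockrt} to the resulting sum of at most $|S|$ terms in $Y$, then Theorem \ref{gl} for $Y$, gives $\lv\sum_{i,j\in S}\aaa X_iY_j\rv_p\le C\norxy{(\aaa)_{i,j\in S}}\le C\norxy{(\aaa)}$, and hence (using Theorem \ref{gl} for $Y$ once more, conditionally on $X$) $\lv\nory{(\sum_i\aaa X_i)_{j\in S}}\rv_{L^p(X)}\le C\norxy{(\aaa)}$; consequently $\E_X A'''\le|\J|^{1/p}\max_{S\in\J}\lv\nory{(\sum_i\aaa X_i)_{j\in S}}\rv_{L^p(X)}\le C\norxy{(\aaa)}$. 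Adding the three estimates proves the corollary, with a universal constant.

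The delicate point is the large‑support piece, and inside it the sparse part $A'''$: one has to carry out the dyadic decomposition of the admissible vectors so that the surviving family of supports $\J$ has cardinality only $e^{Cp}$ (the idea of \cite{Latloch}), and one has to avoid ever applying Jensen's inequality in $X$ to the full matrix — doing so would reintroduce the spurious third summand $\norx{(\sqrt{\sum_j\aaa^2})_i}$. It is kept out by always writing each estimate in the shape ``$L^p$‑norm of a supremum over at most $p$ coordinates'' and invoking Fact \ref{prockrt}, which is legitimate precisely because every support $S$ entering $A'''$ satisfies $|S|\le\lfloor p\rfloor$; the bookkeeping of the block sums for $A''$ (the fourth‑moment and monotonicity estimates, matching \eqref{19.1}--\eqref{19.5}) is routine but must be done carefully.
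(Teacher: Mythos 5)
Your proof is correct, but it is worth noting that the paper does not prove this statement at all: Corollary \ref{A.7} is imported verbatim from \cite[Corollary 3]{Some}, and the introduction explicitly says that Lata{\l}a's original argument there rests on dimension-free concentration for products of log-concave measures. You have therefore supplied a genuinely different, self-contained derivation --- and, interestingly, it is essentially the $d=1$, log-concave specialization of the machinery the paper itself builds later for the $\Sd(d)$ case: the split into a small-support block handled by Fact \ref{prockrt} plus Gluskin--Kwapie\'n, and a large-support block decomposed \`a la \cite{Latloch} into a ``bounded'' part ($y'$, controlled by $\sqrt{p}\,(\sum_{j>p,i}a_{i,j}^2)^{1/2}$ via Lemma \ref{rad}), a ``moderate'' part ($y''$, controlled blockwise by H\"older, the fourth-moment bound and the monotone rearrangement, exactly as in the proof of \eqref{poU}), and a ``sparse'' part ($y'''$, whose supports form the family $\J$ of \eqref{trojkat} with $|\J|\le C^p$, so that one can pay $|\J|^{1/p}$ and reduce to the small-support case). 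I checked the steps you compress: the restriction of an admissible $y$ to any coordinate subset stays admissible, so the triangle inequality for $\nory{\,\cdot\,}$ legitimizes the three-way split; the normalization \eqref{nor} plus convexity of $N^Y_j$ gives $\ny(t)\ge |t|\ge 1$ for $|t|\ge 1$, which is what bounds both $|y_j|$ by $l^3$ and the number of active coordinates per block by $p$; and $\E(\sum_i a_{i,j}X_i)^4\le C(\sum_i a_{i,j}^2)^2$ follows by expanding the fourth moment and using $\E X_i^4\le C(\E X_i^2)^2\le C$, which the normalization guarantees. The one point you should state rather than leave implicit is that the coordinates $j\in(p,2p]$ fall outside the dyadic blocks $(2^lp,2^{l+1}p]$, $l\ge 1$, so their ``large'' part must be routed entirely into $y'''$; this is harmless because any subset of $(p,2p]$ of cardinality at most $p$ belongs to $\J$, as the paper notes after \eqref{trojkat}. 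Your argument buys a proof of the quoted corollary from first principles (useful for self-containedness), at the cost of being longer than the concentration-based original; conversely it avoids any tool that is special to the log-concave setting, which is precisely why the same scheme survives the generalization carried out in Sections \ref{sekcja3}--\ref{sekcja4}.
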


\begin{twr}[Kwapie\'n decoupling inequalities {\cite[ Theorem 2]{Kwa}} ]\label{A.8}
Let $F$ be a Banach space and\\
 $(X^i_j)_{i\leq d,j \leq n},(X_j)_{j\leq n}$ are  independent symmetric random variables such that
$$\forall_{j \leq n}\ X^1_j,X^2_j,\ldots,X^d_j,X_j \textrm{ are i.i.d}.$$
Assume that array $(\ai)_{\id} \in F^{n^d}$ is tetrahedral ($i_k = i_l$ for $k\neq l$, $k,l \leq n$ implies $\ai = 0$) and symmetric ($a_{\id}=a_{i_{\pi(1)},\ldots,i_{\pi(d)}}$ for all permutations $\pi$ of $[d]$). Then for any convex function $\phi:F \rightarrow \R_+$
\begin{align*}
\E \phi \left(\frac{1}{C} \sum_\id \ai \il  \right) \leq& \E \phi \left(\sum_\id \ai X_{i_1} \cdot \ldots \cdot X_{i_d} \right) \\
 &\leq \E \phi \left(C \sum_\id \ai \il  \right).
\end{align*}

\end{twr}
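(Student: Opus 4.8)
The plan is to prove the two inequalities by separate arguments: the right-hand one, which bounds the coupled chaos by the decoupled one, is the accessible direction, whereas the left-hand one is the real obstacle. For the right-hand inequality I would use a random partition. Let $\pi\colon[n]\to[d]$ be uniformly random, independent of the $X$'s, and set $I_k=\pi^{-1}(k)$. Since the array $(\ai)$ is tetrahedral, $\Pro(\pi(i_k)=k\textrm{ for all }k)=d^{-d}$ for every tuple with pairwise distinct entries, and since it is symmetric, taking the conditional expectation in $\pi$ gives
\begin{equation}
\E_\pi\Bigl[\sum_{\substack{\id\\ i_k\in I_k\ \forall k}}\ai X_{i_1}\cdots X_{i_d}\Bigr]=d^{-d}\sum_{\id}\ai X_{i_1}\cdots X_{i_d}.\nonumber
\end{equation}
Jensen's inequality for the convex $\phi$, followed by integration in $X$, yields
\begin{equation}
\E\phi\Bigl(d^{-d}\sum_{\id}\ai X_{i_1}\cdots X_{i_d}\Bigr)\le\E\phi\Bigl(\sum_{i_k\in I_k}\ai X_{i_1}\cdots X_{i_d}\Bigr).\nonumber
\end{equation}

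Conditionally on $\pi$ the blocks $I_1,\dots,I_d$ are disjoint, hence the families $(X_i)_{i\in I_1},\dots,(X_i)_{i\in I_d}$ are independent, and by the hypothesis that $X^1_j,\dots,X^d_j,X_j$ are i.i.d.\ for each $j$, the restricted sum on the right above has the same law as $\sum_{i_k\in I_k}\ai X^1_{i_1}\cdots X^d_{i_d}=\sum_{\id}\ai\prod_{k=1}^d\bigl(\mathbf 1_{\{\pi(i_k)=k\}}X^k_{i_k}\bigr)$. Now I would invoke the contraction principle applied successively to the $d$ sequences $(X^k_\cdot)$ (legitimate since the chaos is multilinear in these sequences and every $X^k_j$ is symmetric): dropping the $0/1$ multipliers $\mathbf 1_{\{\pi(i_k)=k\}}$ does not increase $\E\phi$, so $\E\phi\bigl(d^{-d}\sum_{\id}\ai X_{i_1}\cdots X_{i_d}\bigr)\le\E\phi\bigl(\sum_{\id}\ai\il\bigr)$, which is the right-hand bound with $C=d^d$.

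For the left-hand inequality this approach breaks down, because a sub-sum of a coupled chaos is in general not dominated in the convex order by the whole chaos; this "recoupling" step is the technical heart of the statement and the step I expect to be the main obstacle. My route would be to condition on the absolute values: writing $X_j=\sigma_j|X_j|$ with i.i.d.\ signs $\sigma_j$ independent of $(|X_j|)_j$, the coupled chaos becomes, conditionally, a Rademacher chaos $\sum_{\id}\bigl(\ai\prod_k|X_{i_k}|\bigr)\prod_k\sigma_{i_k}$, and the decoupled chaos likewise becomes a decoupled Rademacher chaos, so it suffices to treat Bernoulli sequences. For Bernoulli sequences one argues by induction on $d$, splitting the index set by an independent auxiliary Bernoulli family and recombining the surviving monomials, as in Kwapie\'n's original proof; carrying this induction through while keeping track of the $d$-dependence of the constant is the real difficulty. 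As Theorem~\ref{A.8} is quoted verbatim from \cite[Theorem~2]{Kwa}, in this paper we simply refer to that source rather than reproducing the argument.
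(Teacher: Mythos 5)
This theorem is quoted by the paper from Kwapie\'n \cite[Theorem 2]{Kwa} without proof, so there is no internal argument to compare against; what matters is whether your sketch is sound. Your proof of the right-hand inequality is correct and is the standard random-partition argument: the tetrahedral hypothesis gives $\E_\pi\prod_k\1_{\{\pi(i_k)=k\}}=d^{-d}$ for tuples with distinct entries, Jensen in $\pi$ produces the restricted sum, disjointness of the blocks lets you replace $X_j$ by $X^{\pi(j)}_j$ in law, and the contraction principle (valid for general convex $\phi$ and independent symmetric summands, by writing $\lambda Y=\frac{1+\lambda}{2}Y+\frac{1-\lambda}{2}(-Y)$ coordinate by coordinate) removes the indicators. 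This yields the upper bound with $C=d^d$, and you correctly identify the reverse ("recoupling") inequality as the genuinely hard direction.

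However, your proposed reduction of the left-hand inequality to the Rademacher case does not work as stated. Conditioning on the moduli turns the coupled chaos into a Rademacher chaos with coefficients $\ai\prod_k|X_{i_k}|$, but the decoupled chaos into one with coefficients $\ai\prod_k|X^k_{i_k}|$ --- these are \emph{different} arrays, since $|X_j|,|X^1_j|,\ldots,|X^d_j|$ are independent copies rather than equal. A decoupling inequality for Bernoulli chaoses with a fixed coefficient array therefore does not transfer, and one would still have to decouple the (nonnegative, non-symmetric) moduli separately, which is not easier. Kwapie\'n's actual proof works with general symmetric variables directly, via a polarization identity for the auxiliary variables $\sum_l\eta_lX^l_j$, $\eta\in\{-1,1\}^d$, combined with conditional Jensen; no reduction to signs is involved. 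Since you ultimately defer to the cited source (exactly as the paper does), the overall proposal is acceptable, but the claim that "it suffices to treat Bernoulli sequences" should be deleted or repaired.
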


\begin{twr}[V. H. de la Pe{\~n}a and S. J. Montgomery-Smith decoupling bounds, {\cite[Theorem 1]{Pen}}.] \label{A.9}
Under the assumptions of Theorem \ref{A.8} we have for any $t\geq 0$,
\begin{align*}
\frac{1}{C} \Pro \left( \lv  \sum_\id \ai \il \rv \geq Ct \right) \leq& \Pro \left( \lv \sum_\id \ai X_{i_1} \cdot \ldots \cdot X_{i_d} \rv \geq t \right)\\  &\leq C \Pro \left( \lv  \sum_\id \ai \il \rv \geq \frac{t}{C} \right).  
\end{align*}
\end{twr}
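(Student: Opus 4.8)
The plan is to prove Theorem \ref{A.9} by the classical randomization-by-partition (``recombination'') technique, organised as an induction on the order $d$; the case $d=1$ is vacuous. Fix the tetrahedral symmetric $F$-valued array $(\ai)$, let $(\delta_i)_{i\le n}$ be i.i.d.\ uniform on $\{1,\dots,d\}$ and independent of the $X$'s, and set $I_k=\{i:\delta_i=k\}$, so $[n]=I_1\sqcup\dots\sqcup I_d$. For any fixed realisation of $\delta$ one splits the coupled form $\sum_{\id}\ai X_{i_1}\cdots X_{i_d}$ into its \emph{separated} part (tuples with $\delta_{i_1},\dots,\delta_{i_d}$ pairwise distinct) and its \emph{diagonal} remainder (some $\delta_{i_k}=\delta_{i_l}$, $k\ne l$). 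By the symmetry of $(\ai)$ and the fact that the coordinates of a separated tuple come from $d$ distinct independent blocks, the separated part is, conditionally on $\delta$, equal in distribution to $d!$ times a sub-sum of the fully decoupled form $\sum_{\id}\ai\il$ (replace each $X_i$ with $i\in I_k$ by $X^k_i$), and averaging over $\delta$ it equals $\tfrac{d!}{d^{d}}\sum_{\id}\ai X_{i_1}\cdots X_{i_d}$, since a tetrahedral tuple is separated with probability $d!/d^{d}$.

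To bound $\Pro(\|\sum_{\id}\ai X_{i_1}\cdots X_{i_d}\|\ge 2t)$ by the decoupled tail, I would use that for \emph{every} $\delta$ the coupled form equals separated$(\delta)$ plus diagonal$(\delta)$, whence
\[
\Pro\bigl(\|\textstyle\sum_{\id}\ai X_{i_1}\cdots X_{i_d}\|\ge 2t\bigr)\le\Pro\bigl(\|\mathrm{separated}(\delta)\|\ge t\bigr)+\Pro\bigl(\|\mathrm{diagonal}(\delta)\|\ge t\bigr),
\]
the probabilities on the right being taken jointly over $\delta$ and $X$. Conditioning on $\delta$, L\'evy's inequality for sums of independent symmetric $F$-valued summands bounds the first term by $2\,\Pro(\|d!\sum_{\id}\ai\il\|\ge t)$; the diagonal term is of the same self-similar type, but after a Hoeffding-type decomposition (``completing the square'' in each pair of tied coordinates, plus peeling off the genuine diagonals) it reduces to decoupling problems of strictly smaller order in suitable auxiliary variables, and is therefore controlled by the induction hypothesis.

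To bound $\Pro(\|\sum_{\id}\ai\il\|\ge t)$ by the coupled tail I would argue in the recombining direction: set $\tilde X_i:=X^{\delta_i}_i$, which has the same joint law as $(X_i)$, so that $\sum_{\id}\ai\tilde X_{i_1}\cdots\tilde X_{i_d}\stackrel{d}{=}\sum_{\id}\ai X_{i_1}\cdots X_{i_d}$. Its separated part, averaged over $\delta$, equals $\tfrac{d!}{d^{d}}\sum_{\id}\ai\il$, and its diagonal part is again absorbed by the induction as above; a Paley-Zygmund argument carried out conditionally on the sequences $(X^k_i)$ then turns ``$\|\sum_{\id}\ai\il\|\ge t$'' into ``$\|\sum_{\id}\ai\tilde X_{i_1}\cdots\tilde X_{i_d}\|\ge t/C$ with probability at least $1/C$'', which gives the claim after integrating out the $X^k_i$'s.

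The main obstacle is precisely that this is a \emph{tail} statement rather than the convex-function statement of Theorem \ref{A.8}, for which Jensen's inequality makes everything immediate: here Jensen must be replaced by L\'evy-type symmetrization, conditioning and Paley-Zygmund, and — crucially — the treatment of the diagonal/collapsed terms must be organised so that no factor depending on $n$ (such as the $\log n$ one obtains from naively iterating the partition down to singleton blocks) enters the constant; this is exactly what the Hoeffding-type reduction together with the induction on $d$ is designed to prevent. The bookkeeping of these diagonal terms, and the tracking of the universal constant through the induction, is the real content of the argument, which is carried out in full in \cite{Pen}.
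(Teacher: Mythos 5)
First, a point of reference: the paper does not prove Theorem \ref{A.9} at all --- it is imported verbatim from \cite{Pen} in the Appendix, which merely collects external results. So the only question is whether your sketch would actually reconstruct the argument of \cite{Pen}, and I do not think it would.

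Your combinatorial starting point is correct (the random partition $\delta$, the separated/diagonal split, the identity $\E_\delta[\mathrm{sep}(\delta)]=\tfrac{d!}{d^d}\sum_\id\ai X_{i_1}\cdots X_{i_d}$, and the conditional identification of $\mathrm{sep}(\delta)$ with $d!$ times a restricted decoupled sum), but the analytic engine is missing, and the step that would fail is your treatment of the diagonal remainder. The ties occur among the labels $\delta_{i_k}$, not among the indices $i_k$ (the array is tetrahedral, so index ties contribute nothing); hence $\mathrm{diag}(\delta)$ is a full order-$d$ chaos over a random set of off-diagonal tuples, not a chaos of lower order, and no Hoeffding-type ``completion of squares'' reduces its degree. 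Worse, the union bound $\Pro(\lv A+B\rv\ge 2t)\le\Pro(\lv A\rv\ge t)+\Pro(\lv B\rv\ge t)$ applied to the self-similar split $T=\mathrm{sep}(\delta)+\mathrm{diag}(\delta)$ cannot be closed by induction: $\E_\delta\,\mathrm{diag}(\delta)=(1-d!/d^d)\,T$ is of the same order of magnitude as $T$, each iteration halves the threshold, and the constants do not stabilize. The proof in \cite{Pen} never decomposes $T$ in this way. Instead one conditions on the $X$'s, picks a norming functional $f$ with $f(T)=\lv T\rv$, observes that $\E_\delta f(\mathrm{sep}(\delta))=\tfrac{d!}{d^d}\lv T\rv$ while $f(\mathrm{sep}(\delta))$ is a degree-$d$ polynomial in the independent variables $\delta_1,\dots,\delta_n$, and uses hypercontractivity of such polynomials ($L^2$--$L^1$ comparison with constants depending only on $d$) together with Paley--Zygmund to get $\Pro_\delta\bigl(f(\mathrm{sep}(\delta))\ge c(d)\lv T\rv\bigr)\ge c(d)$; only then does one pass to the full decoupled tail by the L\'evy-type restriction argument (which, incidentally, must be applied conditionally block by block and costs $2^d$, not $2$). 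The diagonal term simply never needs to be estimated. You invoke Paley--Zygmund only in the reverse (recombination) direction, and even there you defer the diagonal to the same non-existent induction and do not identify the hypercontractivity input --- which is precisely the mechanism that keeps every constant depending on $d$ alone. As written, the proposal does not yield the theorem.
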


\begin{twr}[Lata{\l}a bound on $L_p$-norms of linear forms \cite{Chaos1}]\label{A.10}
Let $X_1,X_2,\ldots,X_n$ be independent nonnegative random variables or independent symmetric random variables. Then
$$\lv \sum_i X_i \rv_p \sim \inf \left\{ t>0 :\ \sum_i \ln \left( \E \left(1+\frac{X_i}{t} \right)^p  \right) \leq p \right\}.$$
\end{twr}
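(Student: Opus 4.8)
The plan is to establish the two-sided estimate $\lv \sum_i X_i \rv_p \sim t_0$, where $t_0 := \inf\{t>0 : \sum_i \ln \E(1+X_i/t)^p \le p\}$, by separating the two inequalities and, within each, the symmetric case from the (easier) nonnegative case. First I would record two reductions. Since $t\mapsto \sum_i \ln \E(1+X_i/t)^p$ is continuous and strictly decreasing from $+\infty$ (as $t\to 0^+$) to $0$ (as $t\to\infty$), the infimum is attained with equality; and since both sides of the claimed equivalence are positively homogeneous in the vector $(X_i)$, I may rescale so that $t_0=1$. The normalization then reads $\sum_i \ln \E(1+X_i)^p = p$, equivalently $\E\prod_i(1+X_i)^p = e^p$ by independence. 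The whole statement thus reduces to: under this single scalar constraint, $\lv \sum_i X_i \rv_p \sim 1$.

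For the upper bound in the nonnegative case there is a one-line argument: for $X_i\ge 0$ one has $\sum_i X_i \le \prod_i(1+X_i)$, so $\E(\sum_i X_i)^p \le \E\prod_i(1+X_i)^p = e^p$ and $\lv\sum_i X_i\rv_p \le e = e\,t_0$ (equivalently, a polynomial Chebyshev bound $\Pro(\sum_i X_i\ge s)\le (1+s)^{-p}\E\prod_i(1+X_i)^p$). The symmetric case is more delicate because cancellation must be accounted for, and this is exactly what the even-moment content of $\E(1+X_i)^p$ encodes. I would symmetrize ($X_i = \eps_i|X_i|$) and truncate at the level $t_0=1$, writing $X_i = X_i\mathbf{1}_{\{|X_i|\le 1\}} + X_i\mathbf{1}_{\{|X_i|> 1\}} =: X_i' + X_i''$. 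Using $\E(1+X_i)^p = \tfrac12\E[(1+X_i)^p+(1-X_i)^p]$, the constraint splits: the diffuse part contributes $\ln\E(1+X_i)^p \sim p^2\,\E X_i'^2$ (odd moments cancel), forcing $\sum_i \E X_i'^2 \lesssim 1/p$, while the jump part contributes $\sim \ln(1+\E|X_i''|^p)$, forcing $\sum_i \E|X_i''|^p\lesssim 1$. The bounded part $\sum_i X_i'$ is then handled conditionally by hypercontractivity/Khintchine, and the jump part $\sum_i X_i''$ by the nonnegative product inequality applied to $|X_i''|$, which yields the $\ell_1$/maximum contribution without a spurious $\sqrt p$ factor precisely because $\sum_i\E|X_i''|^p\lesssim 1$.

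The lower bound is the main obstacle: one must show $\lv\sum_i X_i\rv_p \ge c$ from the constraint, matching both regimes and, in the symmetric case, surviving cancellation. I would argue by the same dichotomy. If the jump part carries a definite share of the constraint, some index $i_0$ has $\E|X_{i_0}|^p\gtrsim 1$; conditioning on all other variables, symmetry and convexity give $\E_{X_{i_0}}|X_{i_0}+c|^p \ge \E_{X_{i_0}}|X_{i_0}|^p$ for any constant $c$, whence $\lv\sum_i X_i\rv_p \gtrsim \lv X_{i_0}\rv_p \gtrsim 1$ (in the nonnegative case this is the trivial $\sum_i X_i \ge X_{i_0}$). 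If instead the diffuse part carries the share, so that $\sum_i \E X_i'^2 \sim 1/p$, I would use a second-moment Paley--Zygmund argument together with the conditional Khintchine lower bound to get $\lv\sum_i X_i'\rv_p \gtrsim \sqrt{p}\,(\sum_i \E X_i'^2)^{1/2}\sim 1$.

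The hard part is making this dichotomy quantitative and uniform in $p$ and $n$: controlling each logarithm $\ln\E(1+X_i/t)^p$ from above and below by the truncated moments $\E X_i'^2$ and $\E|X_i''|^p$ with absolute constants, and — most delicately — ensuring the two regimes do not interfere. The $\sqrt p$ gain in the diffuse lower bound is genuinely available only when the truncated second-moment budget is spread over order-$p$ coordinates; this is guaranteed here because a coordinate carrying a constant share of the budget would, given $|X_i'|\le t_0$, already register in the jump regime. This bookkeeping, rather than any single inequality, is where the real work lies, and is exactly what Lata\l{}a's argument in \cite{Chaos1} carries out.
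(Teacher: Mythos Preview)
The paper does not prove this statement at all: Theorem~\ref{A.10} is quoted in the Appendix as a known result from Lata{\l}a's paper \cite{Chaos1}, with no proof given. There is therefore nothing in the present paper to compare your proposal against.

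As a sketch of Lata{\l}a's original argument your outline is broadly faithful: the nonnegative upper bound via $\sum_i X_i \le \prod_i(1+X_i)$ is exactly his, and the symmetric case is indeed handled in \cite{Chaos1} by symmetrization, truncation at the level $t_0$, and a split into a sub-Gaussian part controlled through second moments and a large part controlled through $p$-th moments. One point to watch in your diffuse lower bound: you conclude $\lv \sum_i X_i'\rv_p \gtrsim 1$, but the quantity to bound is $\lv \sum_i X_i\rv_p$, and $X_i'$ is not a summand of $X_i$ in any monotone sense. In the symmetric case this is repaired by the contraction principle (since $X_i' = X_i\1_{\{|X_i|\le 1\}}$ and $|X_i'|\le |X_i|$ pointwise, conditioning on $|X_i|$ and using Rademacher contraction gives $\lv\sum_i \eps_i|X_i'|\rv_p \le \lv\sum_i \eps_i|X_i|\rv_p$), but this step should be made explicit. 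Also, the heuristic $\ln\E(1+X_i')^p \sim p^2\,\E X_i'^2$ only holds when $p\,|X_i'|$ is not too large; Lata{\l}a's actual two-sided control of $\ln\E(1+X_i)^p$ by truncated moments is more careful than this, and that care is what makes the dichotomy quantitative.
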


\begin{rem}\label{kryt}\cite[Remark 3.2]{Mel}
Let $X$ be a symmetric random variable, such that $\E X=1$ and there exist constants $K,\beta >0$ such that $\ln \Pro \left(|X|\geq Ktx \right)\leq t^\beta \ln \Pro \left(|X|\geq x \right)$ for any $t,x\geq 1$. Then $X$ fulfills the moment condition \eqref{mc} with $\al=\al(K,\beta).$

\end{rem}




\begin{cor}[Corollary $5.7$ from \cite{AdLat} with $d=1$]\label{A.12}
Let $\mathcal{E}_1,\ldots, \mathcal{E}_n$ be i.i.d random variables with the density $\exp(-|t|/2)$, $\al( \ \cdot \ )$ be a norm on $\R^n$ and $\rho_\al$ be a distance on $\R^n$ defined by 
$$\rho_\al(x,y)=\al(x-y).$$
Then for any $a \in \R_+$, $T \subset B^n_2+aB^n_1$, $t \in (0,1]$,
$$N(T,\rho_\al,Ct \E \al(\mathcal{E}_1,\ldots,\mathcal{E}_n ))\leq \exp\left( t^{-2}+at^{-1}  \right).$$ 

\end{cor}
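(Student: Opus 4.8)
The plan is to prove this as a "dual Sudakov'' estimate, splitting the set $B^n_2+aB^n_1$ into its two summands. First I would use submultiplicativity of covering numbers: since $T\subset B^n_2+aB^n_1$, for any $\eps_1,\eps_2>0$
$$N(T,\rho_\al,\eps_1+\eps_2)\le N(B^n_2,\rho_\al,\eps_1)\,N(aB^n_1,\rho_\al,\eps_2),$$
because if $\{a_k\}$ is an $\eps_1$-net of $B^n_2$ and $\{b_l\}$ an $\eps_2$-net of $aB^n_1$ in the metric $\rho_\al$, then $\{a_k+b_l\}$ is an $(\eps_1+\eps_2)$-net of the sum. It therefore suffices to bound each factor and then set $\eps_1=\eps_2=c_0 t\,\E\al(\mathcal{E}_1,\ldots,\mathcal{E}_n)$ for a small absolute constant $c_0$.

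For the Euclidean factor I would invoke the classical dual Sudakov inequality of Pajor and Tomczak-Jaegermann: for any norm $\al$ on $\R^n$ and any $\delta>0$,
$$\ln N(B^n_2,\rho_\al,\delta)\le C\left(\frac{\E\al(g_1,\ldots,g_n)}{\delta}\right)^{2}.$$
To trade the Gaussian weights for exponential ones I would use the standard comparison $\E\al(g_1,\ldots,g_n)\le C\,\E\al(\mathcal{E}_1,\ldots,\mathcal{E}_n)$ (it follows, e.g., from $g_i$ being dominated by $C\mathcal{E}_i$ in the convex order together with tensorization of the convex order over independent coordinates). With $\eps_1=c_0 t\,\E\al(\mathcal{E}_1,\ldots,\mathcal{E}_n)$ this gives $\ln N(B^n_2,\rho_\al,\eps_1)\le Cc_0^{-2}t^{-2}$.

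For the $\ell^1$ factor I would use the $\ell^1$ analogue of dual Sudakov,
$$\ln N(aB^n_1,\rho_\al,\delta)\le C\,a\,\frac{\E\al(\mathcal{E}_1,\ldots,\mathcal{E}_n)}{\delta},$$
which is the point where the exponential variables are genuinely needed ($B^n_1$ being the unit ball whose canonical tails are exponential). I would prove it by an empirical/Maurey-type argument built on the vertex representation $B^n_1=\mathrm{conv}\{\pm e_1,\ldots,\pm e_n\}$ together with the exponential concentration of $\al(\mathcal{E})$ around its mean; the quantitative input that absorbs the logarithmic loss in such an estimate is $\E|\mathcal{E}_i|\,\al(e_i)\le\E\al(\mathcal{E}_1,\ldots,\mathcal{E}_n)$ for every $i$, which follows from $2\al(\mathcal{E}_i e_i)\le\al(v+\mathcal{E}_i e_i)+\al(v-\mathcal{E}_i e_i)$, with $v$ the remaining coordinates of $\mathcal{E}$, after taking expectations and using the symmetry of $\mathcal{E}_i$. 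With $\eps_2=c_0 t\,\E\al(\mathcal{E}_1,\ldots,\mathcal{E}_n)$ this yields $\ln N(aB^n_1,\rho_\al,\eps_2)\le Cc_0^{-1}a t^{-1}$.

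Combining the three displays and choosing $c_0$ small enough that $Cc_0^{-2}\le 1$ and $Cc_0^{-1}\le 1$, one obtains a net of $T$ of $\rho_\al$-radius $\eps_1+\eps_2=2c_0 t\,\E\al(\mathcal{E}_1,\ldots,\mathcal{E}_n)$ and cardinality at most $\exp(t^{-2}+a t^{-1})$, which is the assertion (the hypothesis $t\le 1$ only keeps us in the regime where this bound is informative). The hard part is the $\ell^1$ estimate: dual Sudakov for $B^n_2$ is classical, but the bound for $B^n_1$, being linear rather than quadratic in $a/\delta$, requires choosing the randomization correctly and balancing the cardinality of the net against the approximation error — this is exactly the content of Corollary 5.7 in \cite{AdLat}, which one may of course also invoke verbatim.
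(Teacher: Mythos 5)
The paper itself offers no argument for this statement---it is quoted verbatim from Corollary 5.7 of \cite{AdLat}---so you are supplying a proof where the text supplies a citation. Your architecture is the right one and is essentially how such estimates are proved: cover $B^n_2$ and $aB^n_1$ separately, use that a sum of nets is a net of the Minkowski sum, bound the Euclidean factor by the Pajor--Tomczak-Jaegermann dual Sudakov inequality together with the comparison $\E \al(g_1,\dots,g_n)\leq C\,\E\al(\mathcal{E}_1,\dots,\mathcal{E}_n)$, and absorb all numerical constants into the radius constant $C$. (One small slip: $Cc_0^{-2}\leq 1$ and $Cc_0^{-1}\leq 1$ force $c_0$ to be \emph{large}, not small; this is harmless, since enlarging $c_0$ only enlarges the final constant $C$ in the radius.)

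The genuine gap is the justification of the $\ell^1$ factor. The bound $\ln N(aB^n_1,\rho_\al,\delta)\leq Ca\,\E\al(\mathcal{E}_1,\dots,\mathcal{E}_n)/\delta$ is correct, but the Maurey empirical-method argument you sketch does not deliver it: sampling $m$ vertices of $B^n_1$ produces a net of cardinality at most $(2n+1)^m$ at scale of order $\E\al(g_1,\dots,g_n)/\sqrt{m}$, i.e.\ an entropy bound of order $(\E\al(g_1,\dots,g_n)/\delta)^2\ln n$ --- quadratic in $1/\delta$ and dimension-dependent --- whereas you need a bound linear in $1/\delta$ and free of $\ln n$; the observation $\E|\mathcal{E}_i|\,\al(e_i)\leq\E\al(\mathcal{E}_1,\dots,\mathcal{E}_n)$ does not repair this. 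The standard proof (and the one behind the cited corollary) is volumetric, via the translation inequality for the product exponential measure $\nu_n$ with density $2^{-n}e^{-\lv y\rv_1}$, which satisfies $\nu_n(A+v)\geq e^{-\lv v\rv_1}\nu_n(A)$. If $x_1,\dots,x_N\in aB^n_1$ are $\delta$-separated for $\rho_\al$, then the sets $\lambda x_k+\tfrac{\lambda\delta}{2}B_\al$ (with $B_\al$ the unit ball of $\al$) are pairwise disjoint and each has $\nu_n$-measure at least $e^{-\lambda a}\nu_n(\tfrac{\lambda\delta}{2}B_\al)\geq\tfrac12 e^{-\lambda a}$ as soon as $\lambda\delta\geq 4\,\E\al(\mathcal{E}_1,\dots,\mathcal{E}_n)$ (Chebyshev), whence $N\leq 2\exp\left(4a\,\E\al(\mathcal{E}_1,\dots,\mathcal{E}_n)/\delta\right)$. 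This is exactly where the exponential variables are needed, and it is the step your sketch leaves unproved; with it in place, your assembly of the two factors is correct.
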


\begin{lem}\cite[Lemma 6.3]{AdLat}\label{A.13}
For any matrix $B=(b_{i,j})_{i,j\leq n}$, any $T\subset B^n_2+aB^n_1$ and $a\geq 1$,
$$\E \sup_{x \in T} \sum_{i,j} b_{i,j} x_i g_j \leq C \left(\sqrt{\sum_{i,j} b^2_{i,j}}+a \cdot \sup_{x,x' \in T} \sqrt{\sum_j \left(\sum_i b_{i,j} (x_i-x'_i) \right)^2} \right).$$
\end{lem}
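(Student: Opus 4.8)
The plan is to read the left-hand side as the expected supremum of a centered Gaussian process and to bound it by a chaining argument whose entropy input is Corollary \ref{A.12}. First I would set $G_x:=\sum_{i,j}b_{i,j}x_i g_j=\sum_j g_j\big(\sum_i b_{i,j}x_i\big)$, a Gaussian process indexed by $x\in T$ whose intrinsic metric is $d(x,x')=\big(\E(G_x-G_{x'})^2\big)^{1/2}=\big(\sum_j(\sum_i b_{i,j}(x_i-x'_i))^2\big)^{1/2}$. Thus the second term on the right is exactly $a$ times the $d$-diameter $D:=\sup_{x,x'\in T}d(x,x')$, and, writing $H:=\sqrt{\sum_{i,j}b_{i,j}^2}$, the goal becomes $\E\sup_{x\in T}G_x\le C(H+aD)$.

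The next step is to extract a metric-entropy estimate from Corollary \ref{A.12}. Observe that $d=\rho_\alpha$ for the norm $\alpha(v)=\big(\sum_j(\sum_i b_{i,j}v_i)^2\big)^{1/2}$ (if $\alpha$ is only a seminorm, pass to the quotient). For the exponential vector $\mathcal E$ of that corollary, Jensen and Cauchy--Schwarz give $\E\alpha(\mathcal E)\le\big(\sum_{i,j}b_{i,j}^2\,\E\mathcal E_i^2\big)^{1/2}=c_0 H$ with $c_0$ a universal constant. Feeding $T\subset B^n_2+aB^n_1$ into Corollary \ref{A.12}, and applying it to the rescaled set $T/\lambda\subset B^n_2+(a/\lambda)B^n_1$ for $\lambda\ge 1$ to reach scales coarser than $H$, yields for every $\varepsilon\le D$ the bound $\log N(T,d,\varepsilon)\le C\big((H/\varepsilon)^2+aH/\varepsilon+1\big)$.

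Finally I would run the chaining, handling the three entropy pieces separately. The quadratic piece $(H/\varepsilon)^2$ is precisely the entropy of a Euclidean ball of Gaussian width $\sim H$; by the standard ellipsoid estimate (quadratic metric entropy forces the $\gamma_2$-functional to be $\lesssim H$, equivalently the fine scales are controlled by the direct bound $\E\big(\sum_i(\sum_j b_{i,j}g_j)^2\big)^{1/2}\le H$), this piece contributes $\lesssim H$. The linear piece $aH/\varepsilon$ is of $\ell^1$/exponential type, for which Dudley's integral converges: $\int_0^D\sqrt{aH/\varepsilon}\,d\varepsilon=2\sqrt{aHD}$. The constant piece contributes $\int_0^D 1\,d\varepsilon=D\le aD$. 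Summing and using the AM--GM bound $2\sqrt{aHD}\le H+aD$ gives $\E\sup_{x\in T}G_x\le C(H+aD)$, which is the assertion.

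The main obstacle is the coexistence of the two entropy regimes. The quadratic (Euclidean) part makes a naive Dudley integral diverge at $0$, so one cannot simply integrate $\sqrt{\log N}$ over all scales: the Euclidean contribution must be harvested by the direct Hilbert-ball estimate (or by generic chaining), while only the linear $\ell^1$-type part may be handed to Dudley. It is exactly the truncation of that integral at the diameter $D$ that converts $\sqrt{aH/\varepsilon}$ into the clean factor $aD$ after AM--GM. Keeping the two scale ranges separate, and verifying that the rescaled form of Corollary \ref{A.12} supplies the entropy all the way up to $\varepsilon=D$, is where the care is needed.
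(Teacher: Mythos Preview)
The paper does not prove this lemma; it is quoted in the appendix as \cite[Lemma~6.3]{AdLat} and used as a black box, so there is no in-paper argument to compare against.

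Your plan---view $G_x$ as a Gaussian process, pull entropy from Corollary~\ref{A.12}, and chain---is the natural one, and you correctly isolate the obstacle: the $(H/\varepsilon)^2$ term makes the Dudley integral diverge. The gap is in how you neutralise that term. The sentence ``quadratic metric entropy forces the $\gamma_2$-functional to be $\lesssim H$'' is not a valid general principle: an entropy profile $\log N\lesssim\varepsilon^{-2}$ does \emph{not} by itself bound $\gamma_2$ (take the ellipsoid in $\ell_2^n$ with semiaxes $i^{-1/2}$, whose entropy is $\sim\varepsilon^{-2}$ while its Gaussian width is $\sim\sqrt{\log n}$). What really gives the $H$ is the structural inclusion $T\subset B_2^n+aB_1^n$ together with the trace identity $\E\sup_{u\in B_2^n}G_u=\E\|Z\|_2\le H$; your parenthetical ``direct bound'' is exactly this. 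But that bound only controls the $B_2^n$ summand. After splitting $x=u(x)+v(x)$ and absorbing $\E\sup_x G_{u(x)}\le H$, you must still chain the residual process $G_{v(x)}$ over a subset of $aB_1^n$ whose $\alpha$-diameter you only know to be $\le D+2H$. For that chaining to converge you need an entropy bound for the $\ell_1$-residual carrying only the linear term $aH/\varepsilon$; Corollary~\ref{A.12} as stated (for $B_2^n+aB_1^n$) still returns the quadratic piece. Closing this requires either the sharper covering estimate for $aB_1^n$ alone that underlies \cite[Corollary~5.7]{AdLat}, or a two-parameter chaining that tracks the $\ell_2$- and $\ell_1$-components simultaneously; that is the genuinely missing step in your sketch, and it is where the work in \cite{AdLat} actually lies.
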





\begin{twr}\cite[Theorem 1.1]{LatSt} \label{9}
Assume that $X_1,\ldots,X_n$ are independent r.v's such that 
$$\forall_i \forall_{p\geq 1} \lv X_i \rv_{2p} \leq \al \lv X_i \rv_p \textrm{ for some } \al>0.$$
Then for any non empty set $T\subset \R^n$ and $p \geq 1$ we have
$$\momp{\sup_{t \in T} \left|\sum_i t_i X_i \right| } \leq C(d) \left( \lv \sup_{t \in T} \left|\sum_i t_i X_i\right| \rv_1 + \sup_{t \in T} \lv \sum_i t_i X_i \rv_p \right).$$
\end{twr}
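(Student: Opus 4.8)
The plan is to prove Theorem~\ref{9} by the standard scheme for comparison of weak and strong moments: reduce to symmetric coordinates, split each $X_i$ at a level governed by its $p$-th moment into a ``bounded'' part and a heavy ``spike'' part, control the bounded part by concentration run scale-by-scale, and control the spike part by hand using the regularity of moments that the hypothesis forces. (The constant denoted $C(d)$ depends only on $\al$; in the application of this paper, where $X_i\in\Sd(d)$, one has $\al=2^d$.) Set $S_t=\sum_i t_iX_i$ and $f(x)=\sup_{t\in T}|\langle t,x\rangle|$. First one removes the deterministic drift: $|\langle t,\E X\rangle|\le\E|\langle t,X\rangle|\le\|S_t\|_p$, so $\sup_t|\langle t,\E X\rangle|\le\sup_t\|S_t\|_p$, and by a standard symmetrization one may then assume the $X_i$ are symmetric (as they are in this paper anyway), so $X_i=\eps_iY_i$ with $Y_i=|X_i|$ and $(\eps_i)$ an independent Bernoulli sequence. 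Iterating $\|X_i\|_{2q}\le\al\|X_i\|_q$ yields the moment-regularity estimate $\|X_i\|_q\le C(\al)(q/p)^{\log_2\al}\|X_i\|_p$ for $q\ge p$ --- equivalently a stretched-exponential tail $\Pro(|X_i|\ge s\|X_i\|_p)\le\exp(-p(cs)^{1/\log_2\al})$ for $s\ge C(\al)$; this is the only place the exact form of the hypothesis is used.

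Choose truncation levels $u_i\ge C(\al)\|X_i\|_p$ large enough that $\Pro(|X_i|>u_i)$ is small (how small depends on $n$ versus $p$: roughly $u_i\asymp(\ln(n)/p)^{1/\log_2\al}\|X_i\|_p$ when $n$ is large), split $X_i=X_i'+X_i''$ with $X_i'=X_i\1_{\{|X_i|\le u_i\}}$, and use $f(X)\le f(X')+f(X'')$. For the bounded, symmetric, independent summands $(\eps_iY_i')$, a direct appeal to Talagrand's concentration inequality for suprema of empirical processes gives $f(X')\lesssim\E f(X')+\sqrt p\,\sigma+pb$ with probability $\ge 1-e^{-p}$, where $\sigma^2=\sup_t\sum_it_i^2\E(Y_i')^2\le\sup_t\|S_t\|_2^2$ and $b=\sup_{t\in T}\max_i|t_i|u_i$; this is \emph{not} good enough, since $\E f(X')\le\E f(X)$ controls the first term but $\sqrt p\,\sigma$ and, worse, $pb$ carry spurious powers of $p$ (and of $\ln n$). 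The fix --- and this is the heart of the argument --- is to run the estimate scale by scale: decompose the relevant index/parameter set into $\exp(O(p))$ pieces with small internal fluctuation (a Sudakov-type net bound, cf.\ Corollary~\ref{A.12}), apply the union bound piece by piece so that $\ln(\#\text{pieces})=O(p)$ rather than $O(\ln n)$, measure the chaining increments by $\|\cdot\|_{X,O(p)}$-norms, and finally compare those back to $\|\cdot\|_{X,p}$ by the moment-regularity estimate above. This is exactly the mechanism developed in Section~\ref{sekcja4} (Lemma~\ref{11}, Theorem~\ref{17}, Fact~\ref{19}), and it yields $\|f(X')\|_p\le C(\al)\big(\E f(X)+\sup_t\|S_t\|_p\big)$.

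For the spike part, $X''$ is supported on the random set $R=\{i:|X_i|>u_i\}$, and by the choice of $u_i$ the exceedance probabilities sum to $\E|R|=\sum_i\Pro(|X_i|>u_i)\ll p$, so a Chernoff bound gives $\Pro(|R|\ge 2p)\le e^{-p}$. On $\{R=R_0\}$, $|R_0|=k\le 2p$, the $(X_i)_{i\in R_0}$ remain independent and symmetric and the conditional laws still grow moderately (from $\E[|X_i|^q\1_{\{|X_i|>u_i\}}]\le\|X_i\|_{2q}^{q}\Pro(|X_i|>u_i)^{1/2}$ by Cauchy--Schwarz), so replacing them by log-concave minorants/majorants as in Lemma~\ref{5} and applying the Gluskin--Kwapie\'n bound (Theorem~\ref{gl}) conditionally estimates $\sup_t|\sum_{i\in R_0}t_iX_i|$ in $L_p$ by $C(\al)\sup_t\|S_t\|_p$; a union bound over the $\le\binom nk$ choices of $R_0$, weighted by $\Pro(R=R_0)\le\prod_{i\in R_0}\Pro(|X_i|>u_i)$, has its combinatorial factor swallowed by the smallness of the weights and the $e^{-p}$ tails, giving $\|f(X'')\|_p\le C(\al)\sup_t\|S_t\|_p$. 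Adding the two parts and undoing the reductions proves the theorem. The genuinely hard step is the bounded part: a black-box concentration inequality loses powers of $p$ (for bounded coordinates such as Rademachers it forces a $\sqrt p$ loss the theorem does not permit), and eliminating this loss requires the scale-by-scale decomposition into $\exp(O(p))$ pieces so that the logarithm of the number of pieces stays proportional to $p$ --- precisely the reason the result is invoked here as an external input rather than reproved.
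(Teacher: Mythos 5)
The paper offers no proof of this statement: Theorem \ref{9} is imported verbatim from Lata{\l}a and Strzelecka \cite{LatSt} and sits in the Appendix among ``results from previous work'', so there is nothing internal to compare your argument against; I can only judge whether your sketch would stand on its own, and it does not. You correctly identify the structure of the problem (symmetrization, the moment-regularity estimate $\lv X_i\rv_q\le C(\al)(q/p)^{\log_2\al}\lv X_i\rv_p$ for $q\ge p$, truncation, and the observation that a black-box concentration inequality loses a power of $p$), but the step you substitute for concentration is not a proof. First, your pointer to ``the mechanism of Lemma \ref{11}, Theorem \ref{17}, Fact \ref{19}'' is circular in this paper: Lemma \ref{11} is deduced from Corollary \ref{10}, which is itself an immediate consequence of Theorem \ref{9}, and Fact \ref{19} uses Lemma \ref{11}. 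Second, and more fundamentally, even rebuilt from scratch the key ingredient is missing: decomposing an \emph{arbitrary} $T\subset\R^n$ into $e^{Cp}$ pieces whose internal fluctuations are dominated by $\E\sup_{t\in T}\left|\sum_i t_iX_i\right|$ plus $\sup_{t\in T}\lv\sum_i t_iX_i\rv_p$ is a Sudakov-type minoration for the process $\sum_i t_iX_i$. Corollary \ref{A.12} does not supply it --- it requires $T\subset B^n_2+aB^n_1$ (a normalization tied to the $\ell_2/\ell_1$ geometry of exponential variables) and says nothing about general $T$ or general $X_i$ --- and no such minoration holds for arbitrary processes. Producing a workable substitute under the sole hypothesis $\lv X_i\rv_{2p}\le\al\lv X_i\rv_p$ is precisely the content of \cite{LatSt}; asserting that the pieces exist is the point at which your argument stops being an argument.

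The spike part also has unresolved issues, though less severe. The Gluskin--Kwapie\'n bound (Theorem \ref{gl}) controls a single linear form, not a supremum over $T$; to pass from a supremum over a section supported on at most $2p$ coordinates to $\sup_{t}\lv\sum_i t_iX_i\rv_p$ one needs the $5^{\lfloor p\rfloor}$-net argument of Fact \ref{prockrt}, as in Lemma \ref{momkrt}. Conditioning on $\{R=R_0\}$ replaces the law of $X_i$, $i\in R_0$, by its restriction to $\{|X_i|>u_i\}$, which need not satisfy the moment hypothesis with the same constant, so ``applying Gluskin--Kwapie\'n conditionally'' after Lemma \ref{5} needs justification. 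And for the union bound over $\binom{n}{k}$ supports to be ``swallowed by the smallness of the weights'', the levels $u_i$ must grow with $n$, which feeds back into the term $p\sup_{t}\max_i|t_i|u_i$ of the bounded part; balancing these is again the delicate point. None of this shows the strategy is wrong --- it is close in spirit to how such results are actually proved --- but as written the proposal is an inventory of the difficulties rather than a resolution of them, as your own closing sentence essentially concedes.
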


\textbf{Acknowledgements:} I would like to express my gratitude to my wife, for supporting me. I would also like to thank prof. R. Lata{\l{}}a for a significant improvement of the paper.

\noindent
Rafa{\l} Meller\\
Institute of Mathematics\\
University of Warsaw\\
Banacha 2, 02-097 Warszawa, Poland\\
{\tt r.meller@mimuw.edu.pl}

\end{document}